\def\inte#1{
\displaystyle\mathop{#1\kern0pt}^\circ }
\def\tn#1{|\kern -0.08em\|{#1}|\kern -0.08em\|}
\def\eqdefa{\buildrel\hbox{\footnotesize def}\over =}
\newcommand{\beqo}{\begin{equation*}}
\newcommand{\eeqo}{\end{equation*}}
\newcommand{\beno}{\begin{eqnarray*}}
\newcommand{\eeno}{\end{eqnarray*}}
\numberwithin{equation}{section}
\newcommand{\mbb}{\mathbb}
\newcommand{\mc}{\mathcal}
\newcommand{\veps}{\varepsilon}
\newcommand{\what}{\widehat}
\newcommand{\wtilde}{\widetilde}
\newcommand{\vphi}{\varphi}
\newcommand{\oline}{\overline}
\newcommand{\ra}{\rightarrow}
\newcommand{\g}{\gamma}
\newcommand{\lan}{\langle}
\newcommand{\ran}{\rangle}
\newcommand{\R}{\mathbb{R}}
\newcommand{\Q}{\mathbb{Q}}
\newcommand{\T}{\mathbb{T}}
\newcommand{\N}{\mathbb{N}}
\renewcommand{\P}{\mathbb{P}}
\renewcommand{\Q}{\mathbb{Q}}
\newcommand{\A}{\mathbb{A}}
\newcommand{\Z}{\mathbb{Z}}
\renewcommand{\L}{\mathbb{L}}
\newcommand{\Id}{{\rm Id}\,}
\newcommand{\adj}{{\rm adj}\,}
\let\vrho=\varrho
\let\pa=\partial
\let\g=\gamma
\let\d=\delta
\let\s=\sigma
\let\f=\frac
\let\p=\psi
\let\D=\Delta
\let\wt=\widetilde
\let\wh=\widehat
\def\cA{{\mathcal A}}
\def\cC{{\mathcal C}}
\def\cF{{\mathcal F}}
\def\cL{{\mathcal L}}
\def\cN{{\mathcal N}}
\def\cS{{\mathcal S}}
\def\cX{{\mathcal X}}
\def\virgp{\raise 2pt\hbox{,}}
\def\cdotpv{\raise 2pt\hbox{;}}
\def\eqdefa{\buildrel\hbox{\footnotesize def}\over =}
\def\Id{\mathop{\rm Id}\nolimits}
\newcommand{\la}{\lambda}
\def\div{\mathop{\rm div}\nolimits}
\def\dive{\mathop{\rm div}\nolimits}
\def\na{\nabla}
\def\p{\partial}
\def\th{\theta}
\newcommand{\beq}{\begin{equation}}
\newcommand{\eeq}{\end{equation}}
\newcommand{\ben}{\begin{eqnarray}}
\newcommand{\een}{\end{eqnarray}}
\newcommand{\andf}{\quad\hbox{and}\quad}
\newcommand{\with}{\quad\hbox{with}\quad}
\newtheorem{defi}{Definition}[section]
\newtheorem{thm}{Theorem}[section]
\newtheorem{lem}{Lemma}[section]
\newtheorem{rmk}{Remark}[section]
\newtheorem{col}{Corollary}[section]
\newtheorem{prop}{Proposition}[section]
\title[Viscous compressible fluids with only bounded density]{A well-posedness result for viscous compressible fluids with only bounded density}
\author[R. Danchin]{Rapha\"el Danchin}
\address[R. Danchin]
{Universit\'e Paris-Est, UPEC, Laboratoire d'Analyse et de Math\'ematiques Appliqu\'ees, CNRS UMR 8050}
\email{raphael.danchin@u-pec.fr}
\author[F. Fanelli]{Francesco Fanelli}
\address[F. Fanelli]
{Univ. Lyon, Universit\'e Claude Bernard Lyon 1, CNRS UMR 5208, Institut Camille Jordan, 43 blvd. du 11 novembre 1918, F-69622 Villeurbanne cedex, France}
\email{fanelli@math.univ-lyon1.fr}
\author[M. Paicu]{Marius Paicu}
\address[M. Paicu]
{Universit\'e de Bordeaux, Institut de Math\'ematiques de Bordeaux, CNRS UMR 5251}
\email{marius.paicu@math.u-bordeaux.fr}
\date{\today}
\begin{document}

\begin{abstract}
We are concerned with the existence and uniqueness of solutions with only bounded 
density for the barotropic compressible Navier-Stokes equations.
 Assuming  that the initial velocity has slightly sub-critical regularity
and that the initial density is a small perturbation (in the $L^\infty$ norm) of a positive constant, 
we prove the existence of local-in-time solutions. 
In the case where the density takes two constant values across a smooth interface
(or, more generally, has  striated regularity with respect to 
some nondegenerate family of vector-fields), we get uniqueness. 
This latter result supplements the work by D. Hoff in \cite{Hoff2} with a uniqueness statement, 
and is valid in any dimension $d\geq2$ and for general pressure laws.
  \end{abstract}
\maketitle

\paragraph*{\bf 2010 Mathematics Subject Classification:}{\small 35Q35 
(primary);
35B65, 
76N10, 
35B30, 
35A02  
(secondary).}

\medbreak
\paragraph*{\bf Keywords:}{\small compressible Navier-Stokes equations; bounded density; maximal regularity; tangential regularity; Lagrangian formulation.}

\section*{Introduction}

We are concerned with  the multi-dimensional
barotropic compressible Navier-Stokes system in the whole space: \beq\label{NS}
\left\{
\begin{array}
{l} \displaystyle \pa_t\rho + \dive(\rho u) = 0\,,\\[1ex]
\displaystyle \pa_t (\rho u) + \dive(\rho u \otimes u) - \mu \Delta u
-\la \na \dive u + \nabla P(\rho) = 0\,.
\end{array}
\right. \eeq Here $\rho=\rho(t,x)$ and $u=u(t,x)$,  with $ (t,x)\in\R_+\times\R^d$
and $d\geq1$, 
denote the density and
velocity of the fluid, respectively.   The pressure $P$ is a given function
of~$\rho.$ We shall take that function locally in  $W^{1,\infty}$  
in all that follows, and assume (with no loss of generality) that it vanishes at some  constant 
reference density $\bar\rho>0.$
The (constant) viscosity coefficients $\mu$ and $\la$ satisfy \beq\label{lame} \mu>0\quad \mbox{and}\quad
\nu:=\la+\mu>0, \eeq which ensures ellipticity of  the operator
\begin{equation} \label{def:L}
\cL\,:=\,-\,\mu\, \Delta\, -\,\la\, \na \dive.
\end{equation}
We supplement System \eqref{NS} with the initial conditions
\begin{equation}\label{eq:initial}
\rho|_{t=0}=\rho_0\quad\hbox{and}\quad
u|_{t=0}=u_0.\end{equation}
\medbreak
A number of recent works have been dedicated 
to the study of solutions  with discontinuous density
(so-called ``shock-data'')  for  models of viscous compressible fluids.
 Even though the situation   is by now quite well understood
for $d=1$ (see e.g. \cite{Hoff0}),  
the multi-dimensional case is far from being completely elucidated. 
In this direction, we mention the papers \cite{Hoff1,Hoff2} by D. Hoff,
 who greatly contributed to construct   ``intermediate solutions'' allowing for discontinuity of the density,  in between the weak solutions of P.-L. Lions in \cite{PLL2} and 
the classical ones of e.g. J. Nash in \cite{Nash}.

In \cite{Hoff2}, in the two-dimensional case, D. Hoff succeeded to get quite an accurate information
on the propagation of density  discontinuities   accros  suitably smooth curves (as predicted by the Rankine-Hugoniot 
condition), under the assumption that the pressure is a linear function of $\rho$ (see Theorem 1.2 therein). 
 In particular, he proved  that those curves  are convected by the flow and keep their initial regularity
 \emph{even though the gradient of the velocity is not continuous}. The result  
 was strongly based on the observation that, for such solutions, the
 ``effective viscous  flux'' $F:=\nu\div u- P(\rho)$ is continuous, although singularities 
 persist in $\div u$ and $P(\rho)$ separately.  

 \medbreak
The present paper aims at completing the aforementioned works  in several directions. 

First, we want to supplement them with a uniqueness result. Indeed, in \cite{Hoff2}, D. Hoff 
constructed solutions (that are global in time  under some smallness assumption)
and pointed out very accurate qualitative properties for the geometric structure of singularities, 
but did not address  uniqueness.  That latter issue  has been considered afterward
in \cite{Hoff3}, but  only for \emph{linear} pressure laws. In fact, the main uniqueness theorem 
therein requires either the pressure law to be linear (as opposed to 
the standard isentropic assumption $P(\rho)=a\rho^\gamma$ with $\gamma>1$)
or some Lebesgue type information on $\nabla\rho$ (thus precluding our considering jumps across interfaces). 
To the best of our knowledge, exhibiting  an appropriate functional 
framework for uniqueness \emph{without imposing a special structure for the solutions} 
 has remained an open question for nonlinear pressure laws and discontinuous densities.

Our second goal is to extend Hoff's works concerning discontinuity across interfaces and uniqueness
 to any dimension and to more general pressure laws and density singularities.
\smallbreak
 Finding conditions on the initial data that ensure $\nabla u$ to belong to 
$L^1([0,T];L^\infty)$ for some $T>0$ is the key to our two goals.  That latter property 
will be  achieved by  combining   parabolic maximal regularity estimates with 
  \emph{tangential} (or striated) \emph{regularity} techniques  that are borrowed from the
work  by J.-Y. Chemin in \cite{Ch1991}.  
\medbreak
In order to introduce the reader with our use of maximal regularity,  let us consider 
the  slightly simpler situation of a fluid fulfilling 
the  inhomogeneous incompressible Navier-Stokes equations:
\beq\label{NSI}
\left\{
\begin{array}
{l} \displaystyle \pa_t\rho + \dive(\rho u) = 0\\[1ex]
\displaystyle \pa_t (\rho u) + \dive(\rho u \otimes u) - \mu \Delta u+ \nabla\Pi = 0\\[1ex]
\div u=0\,.
\end{array}
\right. \eeq
 We have in mind the case  where the initial density is given by  
\begin{equation}\label{eq:rho0}\rho_0=c_1\mathds{1}_{D}+c_2 \mathds{1}_{D^c}\end{equation}
for some positive constants $c_1,c_2$, with $D$ being a smooth bounded domain of $\R^d$ 
(above, $\mathds{1}_A$ designates the characteristic function of a set $A$).

Several recent works have been  devoted to proposing conditions on $u_0$ allowing 
for solving \eqref{NSI} either locally or globally, and uniquely.  The first result in that direction 
has been obtained  by the first author and P.B. Mucha in \cite{DM-cpam}. It 
 is based on endpoint maximal regularity and requires the jump  $c_1-c_2$ to be  small enough. 
However, that approach  requires the use of \emph{multiplier spaces} to handle the low regularity of the density,
and  is very unlikely to be extendable to the compressible setting,
owing to the pressure term that now depends on $\rho.$

Therefore,  we shall rather take advantage of the approach that has been proposed recently in \cite{HPZ} by  the third
author together with J. Huang and P. Zhang  to investigate 
 \eqref{NSI} with  only bounded density. 
 Indeed, it is based on the   standard parabolic maximal regularity and 
 requires only very elementary tools like H\"older inequality and Sobolev embedding. 
 In order to present  the main steps,  assume for simplicity that the  reference  density $\bar\rho$ is $1.$ 
  Then, setting $\vrho:=\rho-1,$   system \eqref{NSI} rewrites
$$\left\{\begin{array}{l} \displaystyle \pa_t\vrho + u\cdot\nabla\vrho  = 0\\[1ex]
\displaystyle \pa_t u - \mu \Delta u+ \nabla\Pi =  -\vrho\pa_t u -(1\!+\!\vrho) u\cdot\nabla u\\[1ex]
\div u=0\,.\end{array}\right.$$
{}From basic maximal regularity estimates (recalled in Subsection \ref{ss:max} below),  
we have\footnote{Throughout the paper
we agree that, whenever $E$ is a Banach space, $r\in[1,\infty]$  and $T>0$,
then $L_T^r(E)$ designates the space $L^r([0,T];E)$ and $\|\cdot\|_{L^r_T(E)}$ the
corresponding norm; when $T=\infty$, we use the notation $L^r(E)$.} for all $1<p,r<\infty,$ 
$$\|u\|_{L^\infty_T(\dot B^{2-2/r}_{p,r})}\,+ 
\,\left\|\left(\pa_tu,\mu\nabla^2u\right)\right\|_{L_T^r(L^p)}\,
\leq\, C\,\Bigl(\|u_0\|_{\dot B^{2-\frac2r}_{p,r}} \,
+\, \|\vrho\pa_t u\|_{L_T^r(L^p)} \,+\,\|(1\!+\!\vrho) u\cdot\nabla u\|_{L_T^r(L^p)}\Bigr)\cdotp$$
It is obvious that the second  term of the r.h.s. may be absorbed by the l.h.s. 
if the nonhomogeneity $\vrho$ is small enough for the $L^\infty$ norm. 
As for the last  term, it may be absorbed either for short time if the velocity is large, or for all times if the velocity is small, and
the norm $L^r(L^p)$ is \emph{scaling invariant} for the incompressible Navier-Stokes equations, 
that is to say  satisfies 
\begin{equation}\label{eq:critical}\frac2r+\frac dp=3.\end{equation}

Those simple observations are the keys to the proof of global existence 
for \eqref{NSI} in \cite{HPZ}. As regards uniqueness,  owing to the hyperbolic part of the system (viz. the first equation of \eqref{NSI}),  we need (at least) a $L^1_T({\rm Lip})$ control
on  the  velocity. Note that if one combines   the above control in $L_T^r(L^p)$ for $\nabla^2u$   with 
the corresponding critical Sobolev embedding, then  we miss  that information by a little
in the critical regularity setting, as having \eqref{eq:critical} and  $r>1$ implies that  $p<d$; nonetheless, it turns out that, if  working in a slightly subcritical framework
(that is $2/r+d/p<3$), one can get  existence and uniqueness altogether 
 for any initial density $\rho_0\in L^\infty$ that is close enough to some positive constant
 (see \cite{DPZ,HPZ} for more details).

In the particular case where $\rho_0$ is given by \eqref{eq:rho0} then, once the Lipschitz control of the transport field is available, it is possible to propagate
the Lipschitz regularity of  the domain $D$, as it  is just advected by the (Lipschitz continuous)
flow of the velocity field.  Based on that observation, 
further developments and more accurate informations on the evolution of
the boundary of $D$ have been obtained very recently by X. Liao and P. Zhang in \cite{LZ1,LZ2}, and by the first author with X. Zhang \cite{DXZ} and with P. B. Mucha \cite{DM}. 
In most of those works, a key ingredient is  the propagation of tangential  regularity, 
in the spirit of the seminal work by J.-Y Chemin  in \cite{Ch1991,Ch1993} dedicated to  the vortex patch 
problem for the incompressible Euler equations. 
We refer also to papers \cite{G-SR}, \cite{D1997}, \cite{D1999}, \cite{Hmidi} for extensions of the results of \cite{Ch1991,Ch1993} to higher dimensions and to
viscous homogeneous fluids; the case of non-homogeneous inviscid flows being treated  in \cite{F_2012}.
 
 \medbreak
 The rest of the paper is devoted to obtaining similar results
 for the compressible Navier-Stokes equations \eqref{NS}, and is structured as follows. 
  In the next section, we present our main results and  give some insight of the proofs. 
  Then, in Section \ref{s:tools}, we   recall the definition 
of Besov spaces and introduce  the  tools for achieving
our results: Littlewood-Paley decomposition, maximal regularity and estimates
involving striated regularity.
Section \ref{s:max-reg} is devoted to the proof of  our  main existence theorem for general discontinuous
densities while the next section concerns the propagation of striated regularity
and uniqueness. Some technical results that are based on harmonic analysis are postponed in the Appendix.


\section{Main results} \label{s:results}

In order to evaluate our chances of getting the same results
for \eqref{NS} as for \eqref{NSI} after suitable adaptation of
the method described above, let us rewrite \eqref{NS} in terms of $(\vrho,u).$ We get, just
denoting  by $P$ (instead of $P(1+\vrho)$) the pressure term,  the system
\begin{equation}\label{NSbis}\left\{\begin{array}
{l} \displaystyle \pa_t\vrho +  u\cdot\nabla\vrho+(\vrho+1)\dive u = 0\\[1ex]
\displaystyle  \pa_t u- \mu \Delta u-(\lambda+\mu)\nabla\dive u  = -\vrho\pa_tu  -(1\!+\!\vrho) u\cdot\nabla u - \nabla P\,.
\end{array}\right. \end{equation}
As the so-called Lam\'e system (that is, the l.h.s. of the second equation)  enjoys  the same 
 maximal regularity properties as the heat equation,  one can handle the 
 terms $\vrho\pa_tu$ and   $(1\!+\!\vrho) u\cdot\nabla u$ in a suitable $L^r(L^p)$ framework
 exactly as in the incompressible situation. However, by this method,  bounding 
 $\nabla P$ requires $\nabla\vrho$ to be in some Lebesgue space, a condition that we want to avoid. 
 In fact,   the coupling between the density and the velocity equations 
 is  stronger  than for \eqref{NSI} so that the  two equations of \eqref{NSbis} should not  be considered separately. 
For that reason, it is  much more difficult to prove a well-posedness result
 for rough densities here  than in the incompressible case, and the presence 
 of the ``out-of-scaling'' term $\nabla P$ precludes our achieving global existence  (even for small data)
 by simple  arguments. 
A standard way to weaken the coupling between the two equations of \eqref{NSbis}  (that has been used by D. Hoff in \cite{Hoff1,Hoff2} and, more recently, by B. Haspot 
 in \cite{Haspot} or by the first author with L. He in \cite{DH})
 is to reformulate the system in terms of a ``modified   velocity  field'',
 in the same spirit as the effective viscous flux mentioned in the introduction: we set
$$ w\,:=\,u\,+\,\nabla(-\nu\Delta)^{-1}P\,,\with\nu:=\lambda+\mu\,.$$
The modified velocity $w$ absorbs the ``dangerous part''
of $\nabla P$, as may be observed when writing out the equation for $w$: 
\begin{equation} \label{d:w0}(1+\vrho)\pa_tw-\mu\Delta w-\lambda\nabla\div w+ (1+\vrho)u\cdot\nabla u+(1+\vrho)(-\nu\Delta)^{-1}\nabla\pa_tP=0.\end{equation}
As, by virtue of the mass equation, we have  $$\pa_tP=(P-\rho P')\div u-\div(Pu);$$ 
 the last term of \eqref{d:w0}  is indeed of lower order and can be bounded in $L^r_T(L^p)$
whenever  $\vrho$ is bounded and belongs to  some suitable Lebesgue space. 

As we shall see in the present paper,  working with $w$ and $\vrho$ rather than with the original  unknowns $u$ and $\rho$ proves to be efficient if one is concerned with existence (and possibly uniqueness)
results for \eqref{NS} with only bounded density. 
In fact, we shall  implement the maximal regularity estimates 
on the equation fulfilled by $w,$ and   bound $\vrho$ by means of the standard a priori estimates
in Lebesgue spaces for the transport equation. 
For technical reasons however,
it will be wise  to replace $(-\nu\Delta)^{-1}$ by its non-homogeneous version $(\Id-\nu\Delta)^{-1}$ which is much less singular.

 That strategy will  enable us to prove the following local-in-time result of existence for \eqref{NS} 
 supplemented with  a rough initial density\footnote{The reader is referred to Section \ref{s:tools} below for the definition of the homogeneous Besov spaces $\dot B^s_{p,r}$.}.
\begin{thm}\label{thm:existence} 
Let $d\geq1$. Let the couple $(p,r)$ satisfy
\begin{equation}\label{eq:exposants}
d\,<\,p\,<\,\infty\qquad\hbox{ and }\qquad 1\,<\,r\,<\,\frac{2p}{2p-d}\,,
\end{equation}
and define the couple of indices $(r_0,r_1)$ by the relations
\begin{equation} \label{index:choice}
\frac1{r_0}\,=\,\frac1{r}\,-\,1\,+\,\frac d{2p}\qquad\mbox{ and }\qquad \frac1{r_1}\,=\,\frac1{r}\,-\,\frac12\cdotp
\end{equation}
Let the initial density $\rho_0$  and velocity $u_0$  satisfy:
\begin{itemize}
 \item $\vrho_0:=\rho_0-1$ in $\bigl(L^{p}\cap L^\infty\bigr)(\R^d)$;
\item $w_0:=u_0-v_0$  in  $\dot B^{2-2/r}_{p,r}$, with $v_0\,:=\,-\nabla(\Id-\nu\Delta)^{-1}(P(\rho_0))$. 
\end{itemize}

There exist $\veps>0$ and a time $T>0$ such that, if 
\begin{equation}\label{hyp:small-vrho}
\|\vrho_0\|_{L^\infty}\,\leq\,\veps\,,
\end{equation}
then there exists a solution $(\rho,u)$ to System \eqref{NS}-\eqref{eq:initial} on $[0,T]\times\R^d$ with $\vrho:=\rho-1$  satisfying
$$\|\vrho\|_{L^\infty([0,T]\times\R^d)}\leq 4\veps\andf \vrho \in \cC([0,T];L^q)\quad\hbox{for all }\ p\leq q<\infty, 
$$
 and $u=v+w$ with  $v\,:=\,-\nabla(\Id-\nu\Delta)^{-1}\bigl(P(\rho)\bigr)$ in $\cC\left([0,T];W^{1,q}(\R^d)\right)$ for all $p\leq q<\infty,$ and
$$
w\,\in\cC\bigl([0,T];\dot B^{2-2/r}_{p,r}\bigr)\cap \,L^{r_0}_T(L^{\infty}),\quad \nabla w\,\in\,L^{r_1}_T(L^{p})\andf \pa_tw\,,\;\nabla^2w\,\in\,L^{r}_T(L^{p})
$$
That solution is unique if $d=1.$
\medbreak

If,  in addition,   $u_0$ and $\vrho_0$ are in $L^2,$ and   $\inf P'>0$ on $[1-4\veps,1+4\veps],$  then the energy balance
\begin{multline}\label{est:en-est}
\frac12\left\|\sqrt{\rho(t)}\,u(t)\right\|^2_{L^2}\,+\,\left\|\Pi\bigl(\rho(t)\bigr)\right\|_{L^1}\,+  \\
+\,\mu\,\|\nabla u\|^2_{L^2_t(L^2)}\,+\,\lambda\,\|\div u\|^2_{L^2_t(L^2)}\,=\,
\frac12\left\|\sqrt{\rho_0}\,u_0\right\|^2_{L^2}\,+\,\left\|\Pi\bigl(\rho_0\bigr)\right\|_{L^1}
\end{multline}
holds true for all $t\in[0,T]$, where the  function $\Pi=\Pi(z)$ is defined by the conditions $\Pi(1)=\Pi'(1)=0$ and $\Pi''(z)\,=\,P'(z)/z$.
\end{thm}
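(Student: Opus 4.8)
The plan is to work throughout with the modified unknowns $(\vrho,w)$, where $\vrho=\rho-1$ and $w=u-v$ with $v:=-\na(\Id-\nu\Delta)^{-1}\bigl(P(\rho)\bigr)$. Since $v$ is curl-free one computes $\cL v=-\na P-v$, and since $(\Id-\nu\Delta)^{-1}$ does not depend on time, $\pa_tv=-\na(\Id-\nu\Delta)^{-1}\pa_tP$; plugging these into the momentum equation of \eqref{NSbis} yields the coupled transport--parabolic system
\[
\pa_t\vrho+u\cdot\na\vrho+(1+\vrho)\div u=0,\qquad (1+\vrho)\pa_tw+\cL w=\cF(\vrho,w),
\]
where $\cF$ collects only \emph{lower-order} terms: $v$, the convection term $-(1+\vrho)u\cdot\na u$, and $(1+\vrho)\na(\Id-\nu\Delta)^{-1}\pa_tP$, the latter being, by the mass equation, equal to $(1+\vrho)\na(\Id-\nu\Delta)^{-1}\bigl((P-\rho P')\div u-\div(Pu)\bigr)$. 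I would then treat the coefficient $1+\vrho$ of $\pa_tw$ as a perturbation of the identity: writing $\pa_tw+\cL w=\cF-\vrho\pa_tw$ and invoking the (generalized) maximal regularity estimate for the Lamé operator recalled in Section \ref{s:tools}, one obtains, for $w_0\in\dot B^{2-2/r}_{p,r}$,
\[
\|w\|_{L^\infty_T(\dot B^{2-2/r}_{p,r})}+\|w\|_{L^{r_0}_T(L^\infty)}+\|\na w\|_{L^{r_1}_T(L^p)}+\|(\pa_tw,\na^2w)\|_{L^r_T(L^p)}\leq C\Bigl(\|w_0\|_{\dot B^{2-2/r}_{p,r}}+\|\cF\|_{L^r_T(L^p)}\Bigr)+C\,\|\vrho\|_{L^\infty}\,\|\pa_tw\|_{L^r_T(L^p)},
\]
so that the last term is absorbed by the left-hand side as soon as $\|\vrho\|_{L^\infty}\leq 4\veps$ with $\veps$ small; and for $\vrho$ the standard $L^q$ a priori estimates for the transport equation give $\|\vrho(t)\|_{L^q}\leq e^{C\int_0^t\|\div u\|_{L^\infty}}\bigl(\|\vrho_0\|_{L^q}+\int_0^t\|\div u\|_{L^q}\bigr)$ for $p\le q<\infty$, together with $1+\|\vrho(t)\|_{L^\infty}\leq(1+\|\vrho_0\|_{L^\infty})\exp\bigl(\int_0^t\|\div u\|_{L^\infty}\bigr)$.

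Next I would check that the indices \eqref{index:choice} and the range \eqref{eq:exposants} are tuned precisely so that these estimates close on a short time interval. Indeed $r_0,r_1>r$, and the (automatic, since $d<p$ and $r>1$) strict sub-criticality $2/r+d/p<3$ forces $\tfrac1{r_0}+\tfrac1{r_1}<\tfrac1r$; hence, splitting $u=v+w$, the convection term is a finite sum of products controlled by quantities such as $\|w\|_{L^{r_0}_T(L^\infty)}\|\na w\|_{L^{r_1}_T(L^p)}$, $\|v\|_{L^\infty_T(L^\infty)}\|\na w\|_{L^{r_1}_T(L^p)}$ and $\|w\|_{L^{r_0}_T(L^\infty)}\|\na v\|_{L^\infty_T(L^p)}$, each carrying a strictly positive power of $T$. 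Likewise, since $\na(\Id-\nu\Delta)^{-1}$ and $\na^2(\Id-\nu\Delta)^{-1}$ are bounded on $L^p$ for $1<p<\infty$ and $P(\rho)\in L^p\cap L^\infty$ (because $P(1)=0$ and $\rho$ stays within $4\veps$ of $1$), one gets $v\in L^\infty_T\bigl(W^{1,q}\cap L^\infty\bigr)$ for $p\le q<\infty$ and, using in addition $w\in L^{r_0}_T(L^\infty)$, all the pressure contributions to $\cF$ lie in $L^{r_0}_T(L^p)+L^{r_1}_T(L^p)+L^\infty_T(L^p)\hookrightarrow L^r_T(L^p)$, again with a gain $T^\delta$. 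Finally a Gagliardo--Nirenberg interpolation between $\na w\in L^{r_1}_T(L^p)$ and $\na^2 w\in L^r_T(L^p)$ yields $\na w\in L^{\tilde r}_T(L^\infty)$ with $\tfrac1{\tilde r}=\tfrac1r-\tfrac12+\tfrac d{2p}<1$, whence $\div u=\div v+\div w\in L^1_T(L^\infty)$ with the modified-velocity part of size $O(T^\delta)$. Choosing first $\veps$ and then $T$ small enough (in terms of $\|w_0\|_{\dot B^{2-2/r}_{p,r}}$ and $\|\vrho_0\|_{L^p\cap L^\infty}$), these bounds show that a suitable ball of the product space --- on which $\|\vrho\|_{L^\infty_T(L^p)}\leq 2\|\vrho_0\|_{L^p}$, $\|\vrho\|_{L^\infty([0,T]\times\R^d)}\leq 4\veps$, and the four norms of $w$ above are bounded by a fixed constant --- is stable under the map that solves successively the transport equation for $\vrho$ (with the available velocity) and the linear Lamé equation for $w$. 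Running the associated iteration (or a Friedrichs regularization), the resulting sequence is uniformly bounded in this ball; since $\pa_t\vrho$ and $\pa_tw$ are then uniformly bounded in suitable spaces, an Aubin--Lions compactness argument and routine passage to the limit produce a solution of \eqref{NS}--\eqref{eq:initial} with the announced regularity, the time-continuity of $\vrho$ in $L^q$ coming from the transport equation together with $u\in L^1_T({\rm Lip})$, and that of $w$ in $\dot B^{2-2/r}_{p,r}$ from the maximal regularity estimate.

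When $d=1$ the same control in fact yields $\na u\in L^1_T(L^\infty)$ for the \emph{full} velocity, so the difference of two solutions can be estimated by combining the $L^p$ (or $L^\infty$) stability estimate for the transport equation with the maximal regularity estimate applied to the difference of the $w$-equations, and a Gr\"onwall argument in a slightly weaker norm gives uniqueness. As for the energy balance, if moreover $u_0,\vrho_0\in L^2$ and $\inf_{[1-4\veps,1+4\veps]}P'>0$, the extra $L^2$ integrability is propagated by the very same estimates; one may then legitimately test the momentum equation against $u$, use the mass equation to recognize $\int_{\R^d}\na P(\rho)\cdot u\,dx=\tfrac{d}{dt}\|\Pi(\rho)\|_{L^1}$ with $\Pi$ as in the statement (the condition $\inf P'>0$ making $\Pi$ convex and nonnegative, hence $\|\Pi(\rho)\|_{L^1}$ finite), and integrate by parts the viscous terms to obtain \eqref{est:en-est}.

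The technical heart of the argument --- and the main obstacle --- is the simultaneous closure in the second paragraph. On the one hand, $\na P$ is out of scaling, which is precisely why its dangerous part must be hidden inside the modified velocity $w$; but this produces, inside $\cF$, a pressure-time contribution involving $\div(Pu)$ and $(P-\rho P')\div u$ that has to be kept in $L^r_T(L^p)$ \emph{without ever differentiating the rough density}. On the other hand, the transport estimate for $\vrho$ needs $\div u\in L^1_T(L^\infty)$, a borderline piece of information that is recovered only from the parabolic smoothing of the $w$-equation through the Gagliardo--Nirenberg interpolation above and the precise choice \eqref{index:choice} of $(r_0,r_1)$; and the whole scheme must be arranged so that $\|\vrho\|_{L^\infty}$ never exceeds $4\veps$, which is what keeps $1+\vrho$ a harmless perturbation of the identity. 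Balancing these three competing requirements on one common short time interval is what confines the conclusion to a local-in-time statement and, by these arguments, prevents any global result even for small data.
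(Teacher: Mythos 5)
Your proposal follows essentially the same route as the paper: the modified velocity $w=u+\nabla(\Id-\nu\Delta)^{-1}P$, maximal regularity for the Lam\'e operator with $\vrho\,\pa_tw$ absorbed thanks to the smallness of $\|\vrho\|_{L^\infty}$, transport estimates for $\vrho$ driven by $\div u\in L^1_T(L^\infty)$, and term-by-term bounds on the forcing carrying positive powers of $T$. The only structural difference is that you build approximate solutions by an iteration/Friedrichs scheme, whereas the paper mollifies the data and invokes a known local existence theorem for smooth data before passing to the limit; this is immaterial.

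Two points need correcting. First, you justify $\vrho\in\cC([0,T];L^q)$ by invoking $u\in L^1_T({\rm Lip})$, but for $d\geq2$ this theorem does not provide that: $\nabla u$ contains $\nabla^2(\Id-\nu\Delta)^{-1}P(\rho)$, which for a merely bounded density is only in $BMO$ (this is precisely why Theorem \ref{thm:reg} adds the striated-regularity hypothesis). What you do have is $\div u\in L^1_T(L^\infty)$ together with $\nabla u\in L^1_T(L^p)$, and the continuity of $\vrho$ in $L^q$ must then be obtained from DiPerna--Lions-type renormalization arguments, as the paper does by referring to \cite{HPZ} and \cite{PLL2}; your a priori estimates themselves are unaffected, since they only use $\div u\in L^1_T(L^\infty)$. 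Second, your $d=1$ uniqueness sketch (Eulerian stability plus Gr\"onwall ``in a slightly weaker norm'') runs into the obstruction the paper identifies: the transport equation forces you to measure $\delta\vrho$ in a negative-order space, and then $\delta P=P(\rho^1)-P(\rho^2)$ cannot be controlled there for a nonlinear pressure law. The paper avoids this by passing to Lagrangian coordinates (Subsection \ref{ss:unique}), which eliminates the hyperbolic loss of derivative; the same Lagrangian argument is what handles $d=1$ here.
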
 

Combining the above statement with Sobolev embeddings  ensures 
that $\nabla w$ and $\div u$ are in $L^1_T(L^\infty).$
However, because operator $\nabla^2(\Id-\nu\Delta)^{-1}$ does not quite map $L^\infty$ into itself (except if $d=1$ of course), there is
no guarantee that the constructed velocity field $u$ has gradient in   $L^1_T(L^\infty)$. This seems to be the minimal requirement in order to get uniqueness of solutions
(see e.g. papers \cite{Hoff2} and \cite{D-Fourier}).
Keeping the model case \eqref{eq:rho0} in mind,  the question is whether 
adding up geometric hypotheses, like interfaces or tangential regularity, 
ensures that property and, hopefully, uniqueness. 

Motivated by the pioneering work by J.-Y. Chemin in \cite{Ch1991,Ch1993}, we shall assume that  the initial density has some ``striated regularity'' along a nondegenerate family
of vector fields. To be more specific,  we have to introduce more  notation and give some definitions. Before doing that, let us 
underline that   propagating tangential regularity for compressible
flows faces us to new difficulties  compared to the incompressible case, due to the fact that $\div u$ does not vanish anymore.

First of all, for any  $p$ in $]d,\infty]$, we denote by $\L^{\infty,p}$ the space of all
continuous and bounded  functions with gradient in $L^p(\R^d)$. 
Now, for a given vector-field $Y$ in  $\L^{\infty,p},$ 
we are interested in the regularity  of a function $f$ along $Y$, i.e. in the quantity
$$
\pa_Yf\,:=\,\,\sum_{j=1}^d\,Y^j\,\pa_jf\,.
$$
This expression is well-defined if $f$ is smooth enough, in which case we have the identity
\begin{equation} \label{eq:d_Yf}
 \pa_Yf\,=\,\div\bigl(f\,Y\bigr)\,-\,f\,\div Y\,.
\end{equation}
If  $f$ is only bounded (which, typically, will be the case if $f$ is the density given by \eqref{eq:rho0}), 
the above right-hand side  makes sense for any vector field $Y$  in $\L^{\infty,p},$
while $\pa_Yf$ has no meaning. Then 
 we take the right-hand side of  \eqref{eq:d_Yf} as a definition of $\pa_Yf$.


In order to define striated regularity, 
fix a family $\mc X=\left(X_\lambda\right)_{1\leq\lambda\leq m}$ of $m$ vector-fields with components in  $\L^{\infty,p}$ and suppose that it is  \emph{non-degenerate}, in the sense that
$$
I(\mc X)\,:=\,\inf_{x\in\mbb{R}^d}\,\sup_{\Lambda\in\Lambda^m_{d-1}}\,\left|\stackrel{d-1}{\wedge}X_\Lambda(x)\right|^{\frac{1}{d-1}}
\,\,\,>\,0\,.
$$
Here $\Lambda\in\Lambda^m_{d-1}$ means that $\Lambda=\left(\lambda_1,\ldots,\lambda_{d-1}\right)$, with each
$\lambda_i\in\left\{1,\ldots,m\right\}$ and $\lambda_i<\lambda_j$ for $i<j$, while the symbol
$\stackrel{d-1}{\wedge}X_\Lambda$ stands for the unique element of $\R^d$ such that
$$
\forall\,\,Y\in\R^d\,,\qquad\Bigl(\stackrel{d-1}{\wedge}X_\Lambda\Bigr)\cdot Y\,=\,
\det\left(X_{\lambda_1}\ldots X_{\lambda_{d-1}},Y\right)\,.
$$
Then we set
$$
\|X_\lambda\|_{\L^{\infty,p}}\,:=\,\|X_\lambda\|_{L^\infty}\,+\,\|\nabla X_\lambda\|_{L^p}
\andf \tn{\mc X}_{\L^{\infty,p}}\,:=\,\sup_{\lambda\in\Lambda} \|X_\lambda\|_{\L^{\infty,p}}\,.
$$
More generally, whenever $E$  is a normed  space, we  use the notation 
$\tn{\mc X}_E\,:=\, \sup_{\lambda\in\Lambda} \|X_\lambda\|_E$.

\begin{defi} \label{d:stri}
Take a vector-field $Y\,\in\,\L^{\infty,p}$, for some $p\in]d,\infty]$.
A function $f\in L^\infty$ is said to be of class $L^p$ along $Y$, and we write $\,f\in\L^p_Y$, if
$\;\div\left(f\,Y\right)\,\in\,L^p\left(\R^d\right)$.

If  $\mc X=\left(X_\lambda\right)_{1\leq\lambda\leq m}$ is a non-degenerate family of vector-fields in $\L^{\infty,p}$ then we
set
$$
\L^p_{\mc X}\,:=\,\bigcap_{1\leq\lambda\leq m}\,\L^p_{X_\lambda}\qquad\mbox{ and }
\qquad
\left\|f\right\|_{\L^{p}_{\mc X}}\,:=\,\frac{1}{I(\mc X)}\,\Bigl(\|f\|_{L^\infty}\,\tn{\mc X}_{\L^{\infty,p}}\,+\,\tn{\div\left(f\,\mc X\right)}_{L^p}\Bigr)\cdotp
$$
\end{defi}

The main motivation for Definition \ref{d:stri} is Proposition \ref{p:tang} below that 
states that, if $\vrho$ is bounded  and if, 
in addition,  $\vrho\in \L^p_{\mc X}$ for some non-degenerate family $\cX$ of vector-fields
in $\L^{\infty,p}$ with $d<p<\infty,$ then $(\eta\Id-\Delta)^{-1}\nabla^2f(\vrho)$ is in $L^\infty$  for all $\eta>0$ and smooth enough function $f.$  
That fundamental property  will enable us to consider  data like \eqref{eq:rho0} in a functional framework
that ensures persistence of interface regularity and uniqueness altogether, as stated just below. 
\begin{thm}\label{thm:reg}
Let $d\geq1$ and  the couple $(p,r)$ fulfill  conditions \eqref{eq:exposants}. Consider initial data $(\rho_0,u_0)$ satisfying the same assumptions as in Theorem \ref{thm:existence}. 
Assume in addition that  there exists  a non-degenerate family 
  $\mc X_0\,=\,\bigl(X_{0,\la}\bigr)_{1\leq\la\leq m}$ of vector-fields in $\L^{\infty,p}$ such that
$\rho_0$ belongs to  $\mbb L^{p}_{\mc X_0}$.

Then, there exists a time $T>0$ and a unique solution  $(\rho,u)$ to System \eqref{NS}-\eqref{eq:initial} on $[0,T]\times \R^d$, such that
$\vrho:=\rho-1$, $v:=-\nabla(\Id-\nu\Delta)^{-1}\bigl(P(\rho)\bigr)$ and $w:=u-v$ satisfy the same properties as in Theorem \ref{thm:existence}.
Furthermore,  $\nabla u$ belongs to $L^1\bigl([0,T];L^\infty(\R^d)\bigr)$ and 
 $u$ has a   flow $\psi_u$ with bounded gradient, that is the unique solution of \begin{equation}
\psi_u(t,x)\,=\,x\,+\,\int_0^t u\bigl(\tau,\psi_u(\tau,x)\bigr)\,d\tau\qquad\hbox{for all }\ (t,x)\in[0,T]\times\R^d\,.
\end{equation}
Finally, if we define $X_{t,\lambda}$ by the formula 
$X_{t,\lambda}(x):=\pa_{X_{0,\lambda}}\psi_u\bigl(t,\psi_u^{-1}(t,x)\bigr)$
then, for all time $t\in[0,T]$, the family  $\mc X_t:=\bigl(X_{t,\lambda}\bigr)_{1\leq\lambda\leq m}$ remains  non-degenerate  and   in the space $\L^{\infty,p}$,
and the density $\rho(t)$ belongs to  $\mbb L^{p}_{\mc X_t}$.
\end{thm}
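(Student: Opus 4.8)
\noindent\emph{Sketch of proof.}
The existence of a solution for which $\vrho$, $v$ and $w$ enjoy the regularity listed in Theorem~\ref{thm:existence} is already granted by that theorem (possibly after decreasing~$T$); the new points to establish are: \emph{(i)}~propagation of the striated regularity of $\vrho$ along the family $\mc X_t$ obtained by transporting $\mc X_0$ by the flow of~$u$; \emph{(ii)}~the bound $\nabla u\in L^1_T(L^\infty)$; \emph{(iii)}~the existence of the bi-Lipschitz flow $\psi_u$ and uniqueness. Point \emph{(ii)} will be deduced from \emph{(i)} through Proposition~\ref{p:tang}, and \emph{(iii)} from \emph{(ii)}. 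The plan is to first derive \emph{a priori} estimates for the relevant striated quantities on a smooth solution, then to close them by a Gronwall/Osgood argument, and finally to transfer everything to the solution of Theorem~\ref{thm:existence} by a regularization procedure; uniqueness is treated separately, once $\nabla u\in L^1_T(L^\infty)$ is available.

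Assume first that $(\rho,u)$ is smooth and set $X_{t,\lambda}(x):=\pa_{X_{0,\lambda}}\psi_u\bigl(t,\psi_u^{-1}(t,x)\bigr)$, so that $X_{t,\lambda}$ solves $\pa_tX_{t,\lambda}+u\cdot\nabla X_{t,\lambda}=\pa_{X_{t,\lambda}}u$. As $X_{t,\lambda}$ is transported, one has the commutation identity $\bigl[\pa_t+u\cdot\nabla,\,\pa_{X_{t,\lambda}}\bigr]=0$ acting on scalar functions; applying it to the mass equation $(\pa_t+u\cdot\nabla)\vrho=-(1+\vrho)\div u$ gives
\beq\label{plan:tangrho}
(\pa_t+u\cdot\nabla)\,\pa_{X_{t,\lambda}}\vrho\,=\,-\,(\div u)\,\pa_{X_{t,\lambda}}\vrho\,-\,(1+\vrho)\,\pa_{X_{t,\lambda}}\div u\,.
\eeq
Since $\pa_{X_{t,\lambda}}\vrho=\div(\vrho X_{t,\lambda})-\vrho\,\div X_{t,\lambda}$ with $\div X_{t,\lambda}\in L^p$, controlling $\vrho(t)$ in $\L^p_{\mc X_t}$ amounts to propagating $\pa_{X_{t,\lambda}}\vrho$ in $L^p$, together with $\tn{\mc X_t}_{\L^{\infty,p}}$ and $I(\mc X_t)^{-1}$. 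Writing $u=v+w$ with $v=-\nabla(\Id-\nu\Delta)^{-1}P(\rho)$, every source term above can be bounded in $L^1_T(L^p)$: indeed, by Theorem~\ref{thm:existence} one has $\div w,\nabla w\in L^1_T(L^\infty)$ and $\nabla^2w\in L^r_T(L^p)$, so that $\pa_{X_{t,\lambda}}\div w$ and $\pa_{X_{t,\lambda}}\nabla w$ belong to $L^r_T(L^p)$ (because $X_{t,\lambda}\in L^\infty$), while $\div v=\nu^{-1}\bigl(P(\rho)-(\Id-\nu\Delta)^{-1}P(\rho)\bigr)\in L^\infty_T(L^\infty)$, whence $\div u\in L^1_T(L^\infty)$ as well. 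The genuinely delicate terms are $\pa_{X_{t,\lambda}}\div v$ and $\pa_{X_{t,\lambda}}\nabla v$, i.e.\ $\pa_{X_{t,\lambda}}$ applied to $\nabla^2(\Id-\nu\Delta)^{-1}P(\rho)$; these are controlled in $L^p$ by the commutator and paraproduct estimates underlying Proposition~\ref{p:tang} (and the harmonic analysis of the Appendix), the bound being governed by $\|\vrho\|_{L^\infty}$, $\|\vrho\|_{L^p}$ and $\|\vrho\|_{\L^p_{\mc X_t}}$, the last of which already encodes $\tn{\mc X_t}_{\L^{\infty,p}}$, $I(\mc X_t)^{-1}$ and $\tn{\div(\vrho\,\mc X_t)}_{L^p}$.

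To close the loop, differentiating the transport equation for $X_{t,\lambda}$ once more controls $\|\nabla X_{t,\lambda}\|_{L^p}$ through $\|\nabla u\|_{L^\infty}\|\nabla X_{t,\lambda}\|_{L^p}$ and $\pa_{X_{t,\lambda}}\nabla u$ (again split into the harmless $\pa_{X_{t,\lambda}}\nabla w$ and the delicate $\pa_{X_{t,\lambda}}\nabla v$, handled as above), while $I(\mc X_t)^{-1}$ is bounded by $\exp\bigl(C\|\nabla u\|_{L^1_T(L^\infty)}\bigr)$ exactly as in Chemin's work; moreover $\|\nabla u(t)\|_{L^\infty}\le\|\nabla w(t)\|_{L^\infty}+\|\nabla v(t)\|_{L^\infty}$, with $\|\nabla v(t)\|_{L^\infty}$ estimated via Proposition~\ref{p:tang} in terms of $\|\vrho(t)\|_{\L^p_{\mc X_t}}$ (with at worst a logarithmic loss, since $\nabla^2(\Id-\nu\Delta)^{-1}$ fails to be bounded on $L^\infty$). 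Setting $Z(t):=\tn{\mc X_t}_{\L^{\infty,p}}+I(\mc X_t)^{-1}+\tn{\div(\vrho(t)\,\mc X_t)}_{L^p}$, one thus arrives at an inequality of the form
\beq\label{plan:Z}
Z(t)\,\le\,Z(0)\,+\,\int_0^t\Phi(\tau)\,\bigl(1+Z(\tau)\bigr)\,\log\!\bigl(e+Z(\tau)\bigr)\,d\tau\,,
\eeq
with $\Phi\in L^1(0,T)$ built only from quantities already controlled in Theorem~\ref{thm:existence} and by Sobolev embedding; an Osgood lemma then bounds $Z$ on $[0,T]$, hence $\|\nabla v\|_{L^1_T(L^\infty)}<\infty$ and $\nabla u\in L^1_T(L^\infty)$. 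To make this rigorous, one runs the same estimates on the smooth approximate solutions used in the proof of Theorem~\ref{thm:existence}, regularizing $\rho_0$ and $\mc X_0$ consistently so that the approximate striated norms stay bounded (a classical commutator argument), and passes to the limit; the continuity in time of $\mc X_t$ and of $\pa_{X_{t,\lambda}}\vrho$ follows from~\eqref{plan:tangrho} and from the equation for $X_{t,\lambda}$. Once $\nabla u\in L^1_T(L^\infty)$, the flow $\psi_u$ is well defined and bi-Lipschitz, and $X_{t,\lambda}$ is precisely the push-forward of $X_{0,\lambda}$ by $\psi_u(t,\cdot)$. Finally, uniqueness is obtained from a stability estimate for two solutions in this class: thanks to the $L^1_T(L^\infty)$ control of $\nabla u$ (hence bi-Lipschitz flows), one compares them either in Lagrangian coordinates — where the density becomes the explicit function $\rho_0/\det\nabla\psi_u$ of the flow — or directly in Eulerian variables, measuring the velocity difference in an energy-type space and the density difference in a space of negative Sobolev regularity, in the spirit of \cite{D-Fourier}, the pressure difference being kept under control by the modified-velocity reformulation and the striated regularity just established; a Gronwall argument then forces the two solutions to coincide on $[0,T]$.

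The crucial difficulty lies in closing \eqref{plan:Z}: the control of $\pa_{X_{t,\lambda}}\nabla v$, and of $\nabla v$ in $L^\infty$, through the striated regularity of $\vrho$ sits exactly at the $L^\infty$ endpoint, so the bound obtained for $\|\nabla u\|_{L^\infty}$ is only logarithmic in $Z$; this, together with the possible in-time deterioration of the non-degeneracy constant $I(\mc X_t)$ and with the extra terms produced by the fact that $\div u$ does not vanish (which have no counterpart in the incompressible vortex-patch analysis of \cite{Ch1991,Ch1993}), is what forces an Osgood — rather than a plain Gronwall — scheme, and a priori a possibly short time interval.
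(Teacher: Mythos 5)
Your overall architecture is the right one (transport the family $\mc X_0$ by the flow, propagate the striated norm of the density, invoke Proposition \ref{p:tang} to upgrade $\nabla^2(\Id-\nu\Delta)^{-1}P(\rho)$ from $BMO$ to $L^\infty$, and prove uniqueness in Lagrangian variables), but two steps as written do not go through. First, the closure of your inequality for $Z$ is not justified: Proposition \ref{p:tang} gives a genuine $L^\infty$ bound on $\nabla^2(\eta\Id-\Delta)^{-1}P(\rho)$ with \emph{no} logarithmic loss --- that is precisely what the non-degeneracy of $\mc X$ buys --- but the price is that this bound and, above all, the bound of Proposition \ref{p:tang-d_X} for $\pa_{\mc X}\nabla^2(\eta\Id-\Delta)^{-1}P(\rho)$ are \emph{quadratic} in $\tn{\nabla\mc X}_{L^p}$. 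Hence the differential inequality for the striated norms is of Riccati type, $Z'\lesssim 1+Z^2$, not of the log-linear form you wrote with a coefficient $\Phi\in L^1$ independent of $Z$; it is closed by a plain bootstrap on a possibly small interval $[0,T_0]$, and no Osgood lemma is available (nor needed). Incidentally, the propagation of the density's striated regularity is cleaner than your transport equation for $\pa_{X}\vrho$: working with $\div(\rho X)$ instead, the cross terms cancel and one finds that $\div(\rho X)$ solves the \emph{pure} transport equation $\pa_t\div(\rho X)+u\cdot\nabla\div(\rho X)=-\div u\,\div(\rho X)$, so no term $\pa_X\div u$ (in particular no $\pa_X\Delta(\Id-\Delta)^{-1}P$) ever enters the density estimate.

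Second, the uniqueness argument is incomplete on two counts. The Eulerian alternative you leave open (density difference measured in a negative Sobolev space) does not work for nonlinear pressure laws --- $\delta\rho\in\dot H^{-1}$ does not imply $\delta P\in\dot H^{-1}$ --- which is exactly why the Lagrangian formulation is essentially forced here. More seriously, after the $L^2$ energy estimate on the difference $\delta\oline u$ of the Lagrangian velocities, the Gronwall coefficient is $t\,\bigl(1+\|\nabla\oline{u}^2\|_{L^\infty}^2\bigr)$, and its integrability on $[0,T_0]$ does \emph{not} follow from $\nabla u^2\in L^1_{T_0}(L^\infty)$: one needs $t^{1/2}\,\nabla w^2\in L^2_{T_0}(L^\infty)$, which requires a separate maximal-regularity result with time weights (Proposition \ref{p:max-reg_new} and Corollary \ref{c:mr_new}, resting on Lemmas \ref{l:Dw-w_new} and \ref{l:D^2w_new}). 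This time-weighted estimate is a genuine additional ingredient that your sketch omits, and without it the Gronwall argument does not close.
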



Let us make some comments on the proof of that second main result.
As pointed out above, the striated regularity hypothesis ensures
that $(\Id-\nu\Delta)^{-1}\nabla^2P$ is bounded. 
Since we know from Theorem \ref{thm:existence}  that $\nabla w$ is in 
$L^1_T(L^\infty),$ one can conclude that also $\nabla u$ belongs to $L^1_T(L^\infty)$.
From this property and the remark that, for all $\lambda\in\Lambda$, one has 
 $$ \pa_tX_\lambda+u\cdot\nabla X_\lambda =\pa_{X_\lambda}u\quad\hbox{and}\quad
 \pa_t\div(\rho\,X_\lambda)\,+\,\div\bigl(\div(\rho\,X_\lambda)\,u\bigr)\,=\,0\,,$$
 standard estimates for the transport equation  will enable us  to  propagate  the tangential regularity.

As regards the proof of uniqueness, we adopt 
D. Hoff viewpoint in \cite{Hoff3}: \textit{solutions with minimal regularity are best if compared in 
a Lagrangian framework; that is, we compare the instantaneous states of corresponding fluid particles in 
two different solutions 
rather than the states of different fluid particles instantaneously occupying the same point
of space-time.}
However, the proof that is proposed therein relies on stability estimates in the negative 
Sobolev space $\dot H^{-1}$ for the density; at some point,
 it is crucial that $\rho\in\dot H^{-1}$ implies that $P(\rho)$ is in $\dot H^{-1},$ too, a property that
 obviously fails when the pressure law is nonlinear. 

 In the present paper, adopting  the Lagrangian viewpoint  will enable us  to avoid
 (for   general pressure laws)  the  loss of one derivative due to the hyperbolic part of System \eqref{NS}.
  As a matter of fact, we shall establish  stability estimates for the (Lagrangian) velocity field 
directly  in the energy space, and  the presence of variable coefficients owing  to the initial density variations,
either in front of the time derivative or in the elliptic part of the evolution operator, will be  harmless.
 \medbreak
 Let us finally state an important  application of Theorem \ref{thm:reg}.
 \begin{col} Assume that $\rho_0$ is given by \eqref{eq:rho0} for some bounded domain $D$
of class $W^{1,p}$ with $d<p<\infty$.

Then, there exists $\veps>0$ such that, if  $|c_1-c_2|\leq\veps$, then for any initial velocity
 field $u_0$ satisfying the conditions of Theorem \ref{thm:existence}, there exists $T>0$
 such that System \eqref{NS}-\eqref{eq:initial} admits a unique solution $(\rho,u)$. Furthermore, the density at time
 $t$ has a jump discontinuity along the interface of the domain $D_t$ transported by the flow 
 of $u,$ and $\pa D_t$ keeps its $W^{1,p}$ regularity.
 \end{col}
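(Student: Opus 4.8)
The plan is to derive the corollary as a direct consequence of Theorem~\ref{thm:reg}, the only real work being to recast the geometric hypothesis ``$D$ is a bounded domain of class $W^{1,p}$'' into the striated-regularity language of Definition~\ref{d:stri}. First I would observe that, since $\partial D$ is a compact $W^{1,p}$ hypersurface in $\R^d$, one can cover it by finitely many balls $B_1,\dots,B_K$ in each of which $\partial D$ is, after rotation, the graph of a $W^{1,p}$ function; on $B_k$ one has $d-1$ tangent vector-fields $T_1^{(k)},\dots,T_{d-1}^{(k)}$ with $W^{1,p}$ (hence, by Sobolev embedding since $p>d$, bounded and continuous) coefficients spanning the tangent space. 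Using a $W^{1,p}$ partition of unity subordinate to $\{B_k\}$ together with finitely many constant vector-fields supported away from $\partial D$ (to retain non-degeneracy near the interior and exterior of $D$), I would assemble a \emph{non-degenerate} family $\mc X_0=(X_{0,\lambda})_{1\le\lambda\le m}$ of vector-fields in $\L^{\infty,p}$, all of which are tangent to $\partial D$. For such a field $X$, the distributional identity $\div(\mathds 1_D X)=\partial_X(\mathds 1_D)$ is supported on $\partial D$ and vanishes there precisely because $X$ is tangent to $\partial D$; hence $\div(\mathds 1_D X)=0\in L^p$, and likewise $\div(\mathds 1_{D^c}X)=0$. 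Consequently $\rho_0=c_1\mathds 1_D+c_2\mathds 1_{D^c}$ satisfies $\div(\rho_0 X_{0,\lambda})=0\in L^p$ for every $\lambda$, i.e. $\rho_0\in\mbb L^p_{\mc X_0}$. Also $\vrho_0=\rho_0-1\in L^p\cap L^\infty$ automatically, and $\|\vrho_0\|_{L^\infty}=\max(|c_1-1|,|c_2-1|)$, which is $\le\veps$ as soon as $c_1,c_2$ are chosen (as we may, after normalising the reference density) close to $1$ with $|c_1-c_2|\le\veps$; more intrinsically one can subtract a constant so that only the jump size matters.

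With these verifications in hand, Theorem~\ref{thm:reg} applies verbatim: there exist $T>0$ and a unique solution $(\rho,u)$ on $[0,T]\times\R^d$ with $\nabla u\in L^1_T(L^\infty)$, a bi-Lipschitz flow $\psi_u$, and, for each $t\in[0,T]$, a non-degenerate family $\mc X_t=(X_{t,\lambda})_\lambda\subset\L^{\infty,p}$ such that $\rho(t)\in\mbb L^p_{\mc X_t}$. It then remains to read off the geometric conclusion. Since $\rho$ satisfies the transport equation and the flow $\psi_u$ is a bi-Lipschitz homeomorphism, one has $\rho(t,\cdot)=\rho_0\circ\psi_u(t,\cdot)^{-1}$, so $\rho(t)=c_1\mathds 1_{D_t}+c_2\mathds 1_{D_t^c}$ with $D_t:=\psi_u(t,D)$; in particular $\rho(t)$ still jumps, across $\partial D_t=\psi_u(t,\partial D)$, by exactly $c_1-c_2\neq0$ (taking $\veps$ small enough that this difference, and the smallness, are compatible). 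Finally, to see that $\partial D_t$ is of class $W^{1,p}$ I would use that the vector-fields $X_{t,\lambda}$ are, by their very definition $X_{t,\lambda}(x)=\partial_{X_{0,\lambda}}\psi_u(t,\psi_u^{-1}(t,x))$, the push-forwards of the $X_{0,\lambda}$ by the flow, hence remain tangent to the transported interface $\partial D_t$; the relation $\div(\rho(t)X_{t,\lambda})\in L^p$ with $\rho(t)$ piecewise constant across $\partial D_t$ forces each $X_{t,\lambda}$ to be ``$L^p$ along'' the characteristic function $\mathds 1_{D_t}$, which — by the converse direction of the graph description, together with the non-degeneracy $I(\mc X_t)>0$ guaranteeing that the $X_{t,\lambda}$ genuinely span the tangent space at every point of $\partial D_t$ — is equivalent to $\partial D_t$ being locally the graph of a $W^{1,p}$ function.

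The step I expect to be the main obstacle is this last equivalence: passing rigorously between ``$\partial D_t$ is a $W^{1,p}$ hypersurface'' and ``$\mathds 1_{D_t}\in\L^p_{X_{t,\lambda}}$ for a non-degenerate tangent family''. One direction (regular interface $\Rightarrow$ striated regularity) is the clean computation with tangent fields sketched above; the reverse direction is more delicate, as one must reconstruct a local $W^{1,p}$ graph parametrisation of $\partial D_t$ from the integrability of the tangential divergences, which requires knowing that $\partial D_t$ is, to begin with, a reasonably regular (say, Lipschitz) hypersurface — information that we do get, since $\partial D_t=\psi_u(t,\partial D)$ is the bi-Lipschitz image of a $W^{1,p}\subset C^{1-d/p}$ hypersurface, hence at least Lipschitz — and then upgrading its local charts to $W^{1,p}$ by a change of variables. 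I would handle this by working in a local chart where $\partial D_t$ is a Lipschitz graph $x_d=\gamma(x')$, expressing tangency of the $X_{t,\lambda}$ and the condition $\div(\rho(t)X_{t,\lambda})\in L^p$ as a (non-degenerate, by $I(\mc X_t)>0$) linear system whose solution yields $\nabla'\gamma\in L^p$, so $\gamma\in W^{1,p}$. Everything else — the covering argument, the partition of unity, the constant-field padding for non-degeneracy, and the transport identity for $\rho$ — is routine once Theorem~\ref{thm:reg} is granted.
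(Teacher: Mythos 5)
Your reduction of the corollary to Theorem \ref{thm:reg} is the right strategy, and the construction of a non-degenerate tangent family $\mc X_0\subset\L^{\infty,p}$ from local graph charts plus constant fields supported away from $\pa D$ is exactly the standard vortex-patch device intended here. (A small slip on the way: for a tangent field $X$ one has $\div(\mathds{1}_DX)=\mathds{1}_D\,\div X$, not $\pa_X\mathds{1}_D$ and not $0$; the needed conclusion $\div(\rho_0X_{0,\lambda})\in L^p$ is unaffected. The renormalisation of the reference density so that $\|\vrho_0\|_{L^\infty}$ is controlled by $|c_1-c_2|$ is legitimate, since only $\nabla P$ enters the equations.)

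The genuine gap is in the last part. You assert that ``$\rho$ satisfies the transport equation'', hence $\rho(t)=\rho_0\circ\psi_u(t,\cdot)^{-1}=c_1\mathds{1}_{D_t}+c_2\mathds{1}_{D_t^c}$. This is false for \eqref{NS}: the density solves the continuity equation $\pa_t\rho+\div(\rho u)=0$ with $\div u\not\equiv0$, so in Lagrangian variables $J_u\,\oline\rho=\rho_0$ (see \eqref{eq:Lagr-rho}); thus $\rho(t)$ equals $(\rho_0/J_u)\circ\psi_u^{-1}$ and is \emph{not} piecewise constant for $t>0$ (indeed $\div u$, hence $J_u$, itself jumps across the interface). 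This invalidates the step where you deduce $\mathds{1}_{D_t}\in\L^p_{X_{t,\lambda}}$ from $\div(\rho(t)X_{t,\lambda})\in L^p$ ``with $\rho(t)$ piecewise constant'', and with it your whole route to the $W^{1,p}$ regularity of $\pa D_t$ --- which moreover passes through the delicate ``converse direction'' you yourself flag. The intended argument is more direct and bypasses $\rho(t)$ entirely: $X_{t,\lambda}=\pa_{X_{0,\lambda}}\psi_u\circ\psi_u^{-1}$ is the push-forward of $X_{0,\lambda}$ by the flow, hence remains tangent to $\pa D_t=\psi_u(t,\pa D)$, and Theorem \ref{thm:reg} states that the family $\mc X_t$ stays non-degenerate and in $\L^{\infty,p}$; this is precisely the assertion that $\pa D_t$ retains its $W^{1,p}$ regularity in the same (striated) sense in which $\pa D$ possessed it, with no reconstruction of graph charts from $\rho(t)$ required. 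Likewise, the persistence of a jump of $\rho(t)$ across $\pa D_t$ should be argued from $\oline\rho=\rho_0/J_u$ together with $\|J_u^{\pm1}-1\|_{L^\infty}\leq C\|Du\|_{L^1_T(L^\infty)}$ (Lemma \ref{l:est_A-J}): the essential oscillation of $\rho(t)$ across the interface is at least $|c_1-c_2|-C(c_1+c_2)\|Du\|_{L^1_t(L^\infty)}>0$ for $t$ small, rather than from an exact (and here incorrect) representation formula.
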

We end this section with a list of possible extensions/improvements of our paper.
\begin{enumerate}
\item All our results may be readily adapted to the case of periodic boundary conditions; indeed, 
our techniques rely on Fourier analysis and thus hold true for functions defined on the torus.
\item We expect  a similar  existence statement if the fluid domain is a bounded open set $\Omega$ with (say) $\cC^2$ 
boundary, and the system is supplemented with homogeneous Dirichlet boundary 
conditions for the velocity. 
Indeed, in that setting, the Besov spaces may be defined by real interpolation 
from the domain of the Lam\'e (or, equivalently the heat) operator
and the maximal regularity  estimates remain the same. The reader
may refer to  \cite{D-bounded} for an example of solving \eqref{NS} 
in that setting, in the case of density in $W^{1,p}$ for some $p>d.$ 

Concerning  the propagation of tangential regularity, the situation where
the reference family of vector-fields does not degenerate at the boundary should be 
tractable with few  changes (this is a matter of adapting the work by N. Depauw in \cite{Depauw}
to our system). This means that one can  consider initial densities like \eqref{eq:rho0}
provided the boundary of $D$ does not meet that of $\Omega.$  
\item To keep the paper a reasonable size, we refrained from considering the global existence
issue for rough densities. We plan to address that interesting question in the near future.
\end{enumerate}


\section{Tools} \label{s:tools}
Here we introduce the main tools for our analysis. First of all, we recall 
 basic facts about Littlewood-Paley theory and Besov spaces. The next subsection is devoted to  maximal regularity results. Finally, in Subsection \ref{ss:tang} we present 
key inequalities involving  striated regularity.

\subsection{Littlewood-Paley theory and Besov spaces} \label{ss:LP}

We  here briefly present the  Littlewood-Paley theory, 
as it will come into play  for proving our main result. We refer e.g. to Chapter 2 of \cite{B-C-D} for more details.
For simplicity of exposition, we focus on  the $\R^d$ case; however, the whole construction can be adapted  to the $d$-dimensional torus $\T^d$.

\medbreak
First of all, let us introduce the so-called ``Littlewood-Paley decomposition''. It is based on a non-homogeneous dyadic partition of unity with respect to the Fourier variable: 
 fix a smooth radial function $\chi$ supported in the ball $B(0,2)$, equal to $1$ in a neighborhood of $B(0,1)$
and such that $r\mapsto\chi(r\,e)$ is nonincreasing over $\R_+$ for all unitary vectors $e\in\R^d$. Set
$\varphi\left(\xi\right)=\chi\left(\xi\right)-\chi\left(2\xi\right)$ and
$\vphi_j(\xi):=\vphi(2^{-j}\xi)$ for all $j\geq0$.
The non-homogeneous dyadic blocks $(\Delta_j)_{j\in\Z}$ are defined by\footnote{Throughout we agree  that  $f(D)$ stands for 
the pseudo-differential operator $u\mapsto\mc{F}^{-1}(f\,\mc{F}u)$.} 
$$
\Delta_j\,:=\,0\quad\mbox{ if }\; j\leq-2,\qquad\Delta_{-1}\,:=\,\chi(D)\qquad\mbox{ and }\qquad
\Delta_j\,:=\,\varphi(2^{-j}D)\quad \mbox{ if }\;  j\geq0\,.
$$
We  also introduce the following low frequency cut-off operator:
\begin{equation} \label{eq:S_j}
S_j\,:=\,\chi(2^{-j}D)\,=\,\sum_{k\leq j-1}\Delta_{k}\quad\mbox{ for }\quad j\geq0,\quad
\hbox{and }\ S_j=0\ \hbox{ for }\ j<0.
\end{equation}
It is well known that for any $u\in\mc{S}'$,  one has the equality 
$$u\,=\,\sum_{j\geq-1}\Delta_ju\qquad\mbox{ in }\quad \mc{S}'\,.$$

Sometimes, we shall alternately use the following 
spectral cut-offs $\dot\D_j$ and $\dot S_j$ that are  defined by
$$
\dot\Delta_j\,:=\,\varphi(2^{-j}D)\andf \dot S_j\,=\,\chi(2^{-j}D)\  \mbox{ for all }\  j\in\Z\,.
$$
Note  that  we have 
\begin{equation}\label{eq:LPh} u\,=\,\sum_{j\in\Z}\dot\Delta_ju\end{equation}
up to  polynomials only, which makes decomposition  \eqref{eq:LPh} unwieldy. A way to have equality in \eqref{eq:LPh} in the sense of tempered distributions 
is to restrict oneself to elements $u$  of  the set $\mc S_h'$ of tempered distributions such that
$$\lim_{j\ra-\infty}\bigl\|\dot S_ju\bigr\|_{L^\infty}\,=\,0\,.$$

It is now time to introduce  Besov spaces.
\begin{defi} \label{d:B}
Let $s\in\R$ and $1\leq p,r\leq\infty$.
\begin{enumerate}[(i)]
\item The \emph{non-homogeneous Besov space}
$B^{s}_{p,r}$ is the  set of tempered distributions $u$ for which
$$
\|u\|_{B^{s}_{p,r}}\,:=\,
\left\|\left(2^{js}\,\|\Delta_ju\|_{L^p}\right)_{j\geq-1}\right\|_{\ell^r}\,<\,\infty\,.
$$
\item The \emph{homogeneous Besov space} $\dot B^s_{p,r}$ is the subset of distributions $u$ in $\mc S'_h$ such that
$$\|u\|_{\dot B^{s}_{p,r}}\,:=\,\left\|\left(2^{js}\,\|\dot\Delta_ju\|_{L^p}\right)_{j\in\Z}\right\|_{\ell^r}\,<\,\infty\,.$$
\end{enumerate}
\end{defi}

It is well known that  $B^s_{2,2}$ coincides with $H^s$ (with equivalent norms)
and that nonhomogeneous (resp. homogeneous) 
Besov spaces are interpolation spaces between Sobolev spaces $W^{k,p}$ (resp. $\dot W^{k,p}$). 
Furthermore, for all $p\in]1,\infty[$, one has the following continuous embeddings (see the proof in \cite[Chap. 2]{B-C-D}): 
$$\dot B^0_{p,\min(p,2)}\hookrightarrow L^p\hookrightarrow \dot B^0_{p,\max(p,2)}
\andf B^0_{p,\min(p,2)}\hookrightarrow L^p\hookrightarrow B^0_{p,\max(p,2)}.$$
We shall also often use the  embeddings that are stated in the following proposition. 
\begin{prop}\label{p:embed}
Let $1\leq p_1\leq p_2\leq\infty.$ 
The space $B^{s_1}_{p_1,r_1}(\R^d)$ is continuously embedded in the space $B^{s_2}_{p_2,r_2}(\R^d),$ if
$$
s_2\,<\,s_1-d\left(\frac{1}{p_1}-\frac{1}{p_2}\right)\qquad\mbox{ or }\qquad
s_2\,=\,s_1-d\left(\frac{1}{p_1}-\frac{1}{p_2}\right)\;\;\mbox{ and }\;\;r_1\,\leq\,r_2\,. 
$$
The space $\dot B^{s_1}_{p_1,r_1}(\R^d)$ is continuously embedded in the space $\dot B^{s_2}_{p_2,r_2}(\R^d)$ if
$$
s_2\,=\,s_1-d\left(\frac{1}{p_1}-\frac{1}{p_2}\right)\;\;\mbox{ and }\;\;r_1\,\leq\,r_2\,. 
$$
\end{prop}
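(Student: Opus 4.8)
The plan is to deduce everything from \emph{Bernstein's inequality}: for $\lambda>0$, if $\widehat v$ is supported in the ball $B(0,\lambda)$ (or in an annulus $\{\lambda/2\le|\xi|\le 2\lambda\}$), then $\|v\|_{L^{p_2}}\lesssim\lambda^{\,d(1/p_1-1/p_2)}\|v\|_{L^{p_1}}$ whenever $1\le p_1\le p_2\le\infty$. I would first recall this statement (or simply quote \cite[Chap.~2]{B-C-D}) and then apply it dyadic block by dyadic block.

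For the homogeneous embedding I would fix $u\in\dot B^{s_1}_{p_1,r_1}$, note that $\dot\Delta_j u$ is spectrally supported in an annulus of size $2^j$, and apply Bernstein to get $\|\dot\Delta_j u\|_{L^{p_2}}\lesssim 2^{jd(1/p_1-1/p_2)}\|\dot\Delta_j u\|_{L^{p_1}}$. Multiplying by $2^{js_2}$ and using the hypothesis $s_2=s_1-d(1/p_1-1/p_2)$ turns this into $2^{js_2}\|\dot\Delta_j u\|_{L^{p_2}}\lesssim 2^{js_1}\|\dot\Delta_j u\|_{L^{p_1}}$ for every $j\in\Z$; taking $\ell^{r_2}(\Z)$ norms and using $\ell^{r_1}\hookrightarrow\ell^{r_2}$ (valid since $r_1\le r_2$) then yields $\|u\|_{\dot B^{s_2}_{p_2,r_2}}\lesssim\|u\|_{\dot B^{s_1}_{p_1,r_1}}$. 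I would also check that membership in $\mc S'_h$ is preserved, which follows from the same Bernstein estimate applied to $\dot S_j u$.

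For the non-homogeneous embedding the argument is identical for the blocks $\Delta_j u$ with $j\ge0$ (annuli of size $2^j$), and it also applies to $\Delta_{-1}u=\chi(D)u$, which is localized in a fixed ball. In the borderline case $s_2=s_1-d(1/p_1-1/p_2)$ the computation above carries over verbatim, the summation now running over $j\ge-1$. In the strictly sub-critical case I would set $\theta:=s_2-s_1+d(1/p_1-1/p_2)<0$, write $2^{js_2}\|\Delta_j u\|_{L^{p_2}}\lesssim 2^{j\theta}\bigl(2^{js_1}\|\Delta_j u\|_{L^{p_1}}\bigr)$, and observe that, since $j$ ranges over $j\ge-1$, the weight $(2^{j\theta})_{j\ge-1}$ belongs to every $\ell^q$; a Hölder inequality for series (using $\ell^{r_1}\hookrightarrow\ell^{r_2}$ and the weight in $\ell^\infty$ when $r_1\le r_2$, and the weight in $\ell^q$ with $1/q=1/r_2-1/r_1$ otherwise) then gives the embedding for arbitrary $r_1,r_2$.

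There is no deep obstacle here — this is a classical result — but the single point that genuinely requires care is the bookkeeping of low frequencies: in the homogeneous case the blocks with $j\to-\infty$ are present, which is precisely why the exponent relation must be an equality and no decay can be exploited, whereas in the non-homogeneous case the cut-off at $j=-1$ together with $\theta<0$ is what permits both the strict-inequality regime and the absence of any constraint relating $r_1$ and $r_2$.
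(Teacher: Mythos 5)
Your proof is correct and is exactly the classical Bernstein-plus-$\ell^{r}$-monotonicity argument that the paper relies on: the paper itself gives no proof of this proposition and simply refers to \cite[Chap.~2]{B-C-D}, where the same dyadic-block estimate (annuli for $j\geq 0$ or $j\in\Z$, a fixed ball for $\Delta_{-1}$, and the extra geometric decay $2^{j\theta}$ with $\theta<0$ in the non-homogeneous sub-critical case) is carried out. Nothing to add.
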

Finally, we shall need the following continuity result.
\begin{lem}\label{l:D-1} There exists a constant $C$, depending only on $d$, such that for all $p\in[1,\infty],$ we have
$$\|\Delta(\Id-\Delta)^{-1}f\|_{L^p}\leq C\|f\|_{L^p}.
$$
\end{lem}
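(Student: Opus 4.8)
The plan is to exploit the elementary splitting of the Fourier symbol of $\Delta(\Id-\Delta)^{-1}$, namely $-|\xi|^2/(1+|\xi|^2)=-1+(1+|\xi|^2)^{-1}$, which at the operator level reads $\Delta(\Id-\Delta)^{-1}=(\Id-\Delta)^{-1}-\Id$ (multiply the algebraic identity $\Id-(\Id-\Delta)=\Delta$ by $(\Id-\Delta)^{-1}$). Since the identity is trivially bounded on every $L^p$, the whole matter reduces to proving that $(\Id-\Delta)^{-1}$ is bounded on $L^p$ with a norm independent of $p\in[1,\infty]$. Note that applying the Mikhlin--H\"ormander multiplier theorem directly to the symbol $-|\xi|^2/(1+|\xi|^2)$ would only yield the bound for $1<p<\infty$ (the symbol does not vanish at infinity), which is precisely why I first peel off the constant $-1$.

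To treat $(\Id-\Delta)^{-1}$ uniformly in $p$, I would not invoke multiplier theory at all, but rather realise it as convolution with an \emph{explicitly nonnegative, integrable} kernel. The cleanest route is the subordination formula $(\Id-\Delta)^{-1}=\int_0^{\infty}e^{-t}\,e^{t\Delta}\,dt$, checked on $\mc S'$ by the one-line Fourier computation $\int_0^\infty e^{-t}e^{-t|\xi|^2}\,dt=(1+|\xi|^2)^{-1}$. Each $e^{t\Delta}$ is convolution with the Gaussian heat kernel $g_t\geq0$, which has unit mass $\int_{\R^d}g_t\,dx=1$; hence $(\Id-\Delta)^{-1}$ is convolution with $K:=\int_0^{\infty}e^{-t}\,g_t\,dt\geq0$, and $\|K\|_{L^1}=\int_0^{\infty}e^{-t}\|g_t\|_{L^1}\,dt=1$ by Fubini. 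Young's inequality then gives $\|(\Id-\Delta)^{-1}f\|_{L^p}\leq\|f\|_{L^p}$ for all $p\in[1,\infty]$, and the triangle inequality yields $\|\Delta(\Id-\Delta)^{-1}f\|_{L^p}\leq 2\|f\|_{L^p}$, so the claim holds even with a dimension-independent constant $C=2$.

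There is essentially no serious obstacle here: the only points deserving a word of justification are the validity of the subordination identity and the use of Fubini (or Minkowski's integral inequality) to push the $L^1$ estimate past the $t$-integration. As an alternative one could simply quote the classical fact that the Bessel potential kernel $G_2$ is nonnegative with $\|G_2\|_{L^1}=1$ (see e.g. \cite{B-C-D}); the subordination argument is preferable only because it is self-contained and sidesteps reproducing the explicit formula for $G_2$.
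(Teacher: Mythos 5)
Your proof is correct. You use exactly the same initial reduction as the paper, namely the operator identity $\Delta(\Id-\Delta)^{-1}=(\Id-\Delta)^{-1}-\Id$, so the whole question becomes the uniform-in-$p$ boundedness of $(\Id-\Delta)^{-1}$ on $L^p$. Where you diverge is in how that boundedness is justified: the paper quotes a Besov-space multiplier result (Proposition 2.78 of the reference, giving that $(\Id-\Delta)^{-1}$ maps $L^p\hookrightarrow B^0_{p,\infty}$ into $B^2_{p,\infty}\hookrightarrow L^p$), whereas you realise $(\Id-\Delta)^{-1}$ as convolution with the nonnegative Bessel kernel via the subordination formula $\int_0^\infty e^{-t}e^{t\Delta}\,dt$ and conclude by Young's inequality. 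Your route is more elementary and self-contained, avoids Littlewood--Paley theory entirely, and yields the sharper, dimension-independent constant $C=2$ (the paper's constant a priori depends on $d$ through the dyadic decomposition); the paper's route is shorter on the page only because it delegates the work to a cited proposition. Both arguments are valid for the full range $p\in[1,\infty]$, and your remark that a direct Mikhlin--H\"ormander application would miss the endpoints correctly identifies why the splitting is needed in the first place.
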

\begin{proof}
It suffices to notice that 
$$\Delta(\Id-\Delta)^{-1}f=(\Id-\Delta)^{-1}f-f$$
and that $(\Id-\Delta)^{-1}$ maps $L^p$ to $B^{2}_{p,\infty}$ (see Proposition 2.78 of  \cite{B-C-D}) 
hence to $L^p,$ with a constant independent of $p$.
\end{proof}

\begin{col} \label{c:Delta-Id}
Let $u$ solve the elliptic equation $(\Id-\D)u\,=\,f$ in $\R^d$, with $f\in L^p$ for some $p\in[1,\infty]$.
Then $u\in W^{1,p}(\R^d)$, with $\Delta u\in L^p$, and one has the estimate
$$
\|u\|_{W^{1,p}}\,+\,\|\Delta u\|_{L^p}\,\leq\,C\,\|f\|_{L^p}\,,
$$
for some positive constant $C$ depending just on $d$. 
\medbreak
If  $1<p<\infty$, then  $u\in W^{2,p}$ and  we have
$$\|u\|_{W^{2,p}}\,\leq\,C\,\|f\|_{L^p}\,.$$
\end{col}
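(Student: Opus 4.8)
The plan is to deduce everything from the representation $u=(\Id-\Delta)^{-1}f$. This is legitimate: the symbol $(1+|\xi|^2)^{-1}$ is smooth and bounded, so $(\Id-\Delta)^{-1}$ is a genuine Fourier multiplier on $\mc S'$, and the only tempered distribution annihilated by $\Id-\Delta$ is $0$ (taking Fourier transforms, $(1+|\xi|^2)\wh v=0$ forces $\wh v=0$). Hence $u=(\Id-\Delta)^{-1}f$ is the unique solution and every estimate reduces to a mapping property of an explicit multiplier. First I would apply Lemma \ref{l:D-1} to $\Delta u=\Delta(\Id-\Delta)^{-1}f$ to get $\|\Delta u\|_{L^p}\le C(d)\|f\|_{L^p}$, and then use the equation in the form $u=f+\Delta u$ to bound $\|u\|_{L^p}\le\|f\|_{L^p}+\|\Delta u\|_{L^p}$.

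Next I would treat $\nabla u$. Instead of invoking a Mikhlin-type multiplier theorem (which would fail at the endpoints $p=1,\infty$ and would not yield a $p$-independent constant), I would route through Besov spaces exactly as in the proof of Lemma \ref{l:D-1}: the smoothing property of $(\Id-\Delta)^{-1}$ (Proposition 2.78 of \cite{B-C-D}, already used above) maps $L^p$ into $B^2_{p,\infty}$ with operator norm depending only on $d$; then, since $2>1$, Proposition \ref{p:embed} gives $B^2_{p,\infty}\hookrightarrow B^1_{p,1}$, while $\nabla$ maps $B^1_{p,1}$ into $B^0_{p,1}$, which embeds in $L^p$ — all with constants independent of $p$. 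This produces $\|\nabla u\|_{L^p}\le C(d)\|f\|_{L^p}$, and collecting the three bounds proves the first assertion $\|u\|_{W^{1,p}}+\|\Delta u\|_{L^p}\le C\|f\|_{L^p}$ with $C=C(d)$. (Alternatively, one can observe that the kernel of $\nabla(\Id-\Delta)^{-1}$, namely the gradient of the Bessel potential of order $2$, lies in $L^1(\R^d)$, and conclude by Young's inequality.)

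For the last statement, with now $1<p<\infty$, I would estimate the full Hessian: for $1\le j,k\le d$ the operator $\pa_j\pa_k(\Id-\Delta)^{-1}$ has symbol $-\xi_j\xi_k(1+|\xi|^2)^{-1}$, which is smooth on $\R^d$ and satisfies the Mikhlin--H\"ormander bounds $|\pa^\alpha m(\xi)|\lesssim(1+|\xi|)^{-|\alpha|}$, hence is bounded on $L^p$ for every $p\in\,]1,\infty[$ (here a $p$-dependent constant is allowed). Applying it to $f$ gives $\nabla^2u\in L^p$ with $\|\nabla^2u\|_{L^p}\le C\|f\|_{L^p}$, and combining with the first part yields $u\in W^{2,p}$ with the claimed bound.

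I do not expect any serious obstacle: the statement is a soft consequence of Lemma \ref{l:D-1} together with standard multiplier/Besov facts. The only point demanding a little care is the gradient estimate \emph{uniform in $p$ down to the endpoints} $p=1,\infty$, where the Mikhlin multiplier theorem is unavailable and one must argue through the Besov smoothing property (or through the explicit $L^1$ Bessel kernel) instead.
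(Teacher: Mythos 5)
Your proof is correct and follows essentially the same route as the paper: Lemma~\ref{l:D-1} for $\Delta u$, the equation itself for $u$, a low/high-frequency (Besov) decomposition for $\nabla u$ (which is exactly one of the two options the paper suggests), and Calder\'on--Zygmund/Mikhlin boundedness for $\nabla^2u$ when $1<p<\infty$. You spell out the Besov chain $L^p\to B^2_{p,\infty}\hookrightarrow B^1_{p,1}\to B^0_{p,1}\hookrightarrow L^p$ and add a uniqueness remark that the paper leaves implicit, but these are elaborations rather than a different method.
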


\begin{proof}
From Lemma \ref{l:D-1}, we gather that $\Delta u\in L^p$, hence $u\,=\,\Delta u\,+\,f$ belongs to $L^p$, too. This relation in particular implies the control
$\|u\|_{L^p}\leq C\,\|f\|_{L^p}$.
At this point, the control of the gradient of $u$ in $L^p$ follows e.g. from Gagliardo-Nirenberg inequalities (or a decomposition into low and high frequencies).
Finally, in the case $1<p<\infty$, the fact that $\Delta u\in L^p$ implies that $\nabla^2u\in L^{p}$ by Calder\'on-Zygmund theory.
\end{proof}


\subsection{Maximal regularity and propagation of $L^p$ norms} \label{ss:max}

In this subsection we recall some results about maximal regularity for the heat equation, then extend them to the elliptic operator $\mc L$ defined in \eqref{def:L}.
Those results will be essentially the key to  Theorem \ref{thm:existence}, namely existence of solutions in an $L^p$ setting.

\subsubsection{The case of the heat kernel} \label{sss:max-D}
Here we  focus on maximal regularity results for the  heat semi-group, 
as they  will entail similar ones for the Lam\'e semi-group generated by  $-\mc L$ (see Paragraph \ref{sss:max-L} below).
We first look at the propagation of regularity for the initial datum.
Our starting point is the proposition below, that  corresponds to Theorem 2.34 of \cite{B-C-D}.
\begin{prop} \label{prop1.1}
Let $s>0$ and $(p,r)\in [1,\infty]^2$.
A constant $C$ exists such that
\beno
C^{-1}\,\|z\|_{\dot{B}^{-s}_{p,r}}\,\leq\,
\bigl\|\|t^{s/2}\,e^{t\D}z\|_{L^p}\bigr\|_{L^r(\R_+;\f{dt}{t})}\,\leq\,
C\,\|z\|_{\dot{B}^{-s}_{p,r}}\,.
\eeno
\end{prop}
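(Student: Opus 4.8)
The final statement to prove is Proposition \ref{prop1.1}, the characterization
$$C^{-1}\,\|z\|_{\dot B^{-s}_{p,r}}\,\leq\,\bigl\|\|t^{s/2}e^{t\Delta}z\|_{L^p}\bigr\|_{L^r(\R_+;dt/t)}\,\leq\,C\,\|z\|_{\dot B^{-s}_{p,r}}$$
for $s>0$ and $(p,r)\in[1,\infty]^2$. The plan is to reduce everything to a dyadic (Littlewood--Paley) computation, exploiting that the heat flow acts almost diagonally on frequency annuli.

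\textbf{Key steps.} First I would establish the two fundamental dyadic facts about the heat semigroup. The \emph{smoothing/decay} estimate: there exist $c,C>0$ such that for any $j\in\Z$ and any $L^p$ function $f$ spectrally supported in the annulus $\{|\xi|\sim 2^j\}$, one has $\|e^{t\Delta}f\|_{L^p}\leq C e^{-ct2^{2j}}\|f\|_{L^p}$; this follows from writing $e^{t\Delta}\dot\Delta_j=g_j(t,D)$ with $g_j(t,\xi)=e^{t|\xi|^2}\tilde\varphi(2^{-j}\xi)e^{-t|\xi|^2}\cdots$ — more precisely convolving against a rescaled, exponentially-decaying kernel whose $L^1$ norm is $O(e^{-ct2^{2j}})$, by a standard Bernstein-type argument (this is Lemma 2.4 of \cite{B-C-D}). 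Conversely, a \emph{lower bound} on short time: $\|e^{t\Delta}f\|_{L^p}\geq c\|f\|_{L^p}$ for $t2^{2j}\leq 1$, again by writing $f=e^{-t\Delta}e^{t\Delta}f$ on the annulus and bounding the $L^1$ norm of the (now backward, but spectrally localized) kernel uniformly for $t2^{2j}\lesssim1$.

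\textbf{Upper bound.} Write $z=\sum_{j\in\Z}\dot\Delta_j z$, so $t^{s/2}e^{t\Delta}z=\sum_j t^{s/2}e^{t\Delta}\dot\Delta_j z$. Taking $L^p$ norms and using the decay estimate,
$$\|t^{s/2}e^{t\Delta}z\|_{L^p}\,\leq\,C\sum_{j\in\Z}t^{s/2}e^{-ct2^{2j}}\|\dot\Delta_j z\|_{L^p}\,=\,C\sum_{j\in\Z}2^{-js}\bigl(t2^{2j}\bigr)^{s/2}e^{-ct2^{2j}}\,\bigl(2^{js}\|\dot\Delta_j z\|_{L^p}\bigr).$$
Now I would apply the (discrete form of the) convolution inequality $\|a*b\|_{\ell^r}\leq\|a\|_{\ell^1}\|b\|_{\ell^r}$ after the change of variables $\tau=\log_2 t$: the function $\tau\mapsto (2^\tau 2^{2j})^{s/2}e^{-c2^\tau 2^{2j}}$ has $L^r(d\tau)$-norm (equivalently $L^r(dt/t)$-norm) that is finite and independent of $j$, since $s>0$ guarantees integrability at $t\to0$ and the exponential handles $t\to\infty$. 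Hence $\bigl\|\|t^{s/2}e^{t\Delta}z\|_{L^p}\bigr\|_{L^r(dt/t)}\leq C\|(2^{js}\|\dot\Delta_jz\|_{L^p})_j\|_{\ell^r}=C\|z\|_{\dot B^{-s}_{p,r}}$, using Young/Minkowski for the mixed sum-integral norm.

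\textbf{Lower bound.} For the reverse inequality I would localize: apply $\dot\Delta_j$ to $e^{t\Delta}z$, so $\dot\Delta_j e^{t\Delta}z=e^{t\Delta}\dot\Delta_j' z$ where $\dot\Delta_j'$ is a slightly fattened projector, and use the short-time lower bound to get $\|\dot\Delta_j z\|_{L^p}\lesssim \|e^{t\Delta}\dot\Delta_j z\|_{L^p}\lesssim \|\dot\Delta_j(t^{s/2}e^{t\Delta}z)\|_{L^p}\cdot t^{-s/2}$ for $t2^{2j}\leq1$, then integrate $t^{s r/2}$ times the $r$-th power over $t\in(0,2^{-2j})$ against $dt/t$; since $\int_0^{2^{-2j}}t^{sr/2}\,dt/t\sim 2^{-jsr}$ (finite because $s>0$), this yields $2^{-js}\|\dot\Delta_j z\|_{L^p}\lesssim \bigl(\int_0^{2^{-2j}}\|\dot\Delta_j(t^{s/2}e^{t\Delta}z)\|_{L^p}^r\,dt/t\bigr)^{1/r}$. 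Multiply by $2^{js}$, take $\ell^r_j$, and bound the right-hand side by $\bigl\|\|t^{s/2}e^{t\Delta}z\|_{L^p}\bigr\|_{L^r(dt/t)}$ using that $\dot\Delta_j$ is uniformly bounded on $L^p$ and that the annuli $\{t2^{2j}\leq1\}$ have bounded overlap in $(j,t)$ in the appropriate sense (Fubini plus finite overlap of the frequency supports). The endpoint cases $r=\infty$ or $p=\infty$ are handled by the usual $\sup$-conventions with no change.

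\textbf{Main obstacle.} The routine part is the dyadic heat estimates; the step requiring the most care is the lower bound, specifically keeping the frequency-localization honest when passing from $\|\dot\Delta_j z\|_{L^p}$ to the space-time norm of $t^{s/2}e^{t\Delta}z$ — one must commute $\dot\Delta_j$ past $e^{t\Delta}$ on a fattened annulus, invoke the short-time invertibility of $e^{t\Delta}$ there, and then argue that summing the localized lower bounds in $\ell^r$ is controlled by the global norm (this is where the bounded-overlap/Fubini bookkeeping enters). Since this is precisely Theorem 2.34 of \cite{B-C-D}, I would in the actual write-up simply cite it; the sketch above records how the proof goes.
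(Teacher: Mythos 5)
The paper offers no proof of this proposition; it simply identifies it with Theorem~2.34 of \cite{B-C-D}, and you state explicitly that in the actual write-up you would cite that same result, so your approach and the paper's coincide.

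One remark on the sketch you recorded, since it is where you flag the delicacy. Your upper-bound argument (dyadic decomposition, heat smoothing $\|e^{t\Delta}\dot\Delta_j z\|_{L^p}\lesssim e^{-ct2^{2j}}\|\dot\Delta_j z\|_{L^p}$, logarithmic change of variables, Young's convolution inequality) is correct. The lower bound as written, however, would not close: after applying Fubini to
$$\sum_{j\in\Z}\int_0^{2^{-2j}}\bigl\|\dot\Delta_j\bigl(t^{s/2}e^{t\Delta}z\bigr)\bigr\|_{L^p}^r\,\frac{dt}{t}\,,$$
the domains $\{t:t2^{2j}\leq 1\}$ are \emph{nested}, not of bounded overlap, since for every fixed $t$ there are infinitely many $j$ with $t2^{2j}\leq 1$; hence this sum cannot be dominated by $\int_0^\infty\|t^{s/2}e^{t\Delta}z\|_{L^p}^r\,dt/t$ merely from the uniform $L^p$-boundedness of $\dot\Delta_j$. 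The standard repair (and what is done in \cite{B-C-D}) is to exploit the short-time invertibility of $e^{t\Delta}$ on the $j$-th annulus only over a fixed dyadic window $t\in[c_12^{-2j},c_22^{-2j}]$; these windows are essentially disjoint in $t$, and then Fubini and the uniform bound on $\dot\Delta_j$ do give the lower estimate. (Also, no ``fattened'' projector is needed when commuting $\dot\Delta_j$ past $e^{t\Delta}$: both are Fourier multipliers, so they commute exactly.)
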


Thus we deduce   that,  for all $r\in[1,\infty[\,$, having $z$ in $\dot{B}^{-2/r}_{p, r}(\R^d)$ is equivalent to the condition
$e^{t\D}z\in L^r\bigl(\R_+;L^p(\R^d)\bigr)$.
In particular, taking $z=\Delta u_0$ and assuming that $u_0$ has  
 \emph{critical regularity}  $\dot{B}^{-1+d/p}_{p,r}(\R^d)$ for some $p\in(1,\infty)$ and  $r$ fulfilling
 \begin{equation}\label{eq:prcrit}
2-\frac2r=\frac dp-1\quad\hbox{with }\  1 < r<\infty,
 \end{equation}
  Proposition \ref{prop1.1} combined with the classical $L^p$ theory for the Laplace operator imply  that  $\nabla^2  e^{t\D}u_0$ is in 
$L^{r}\bigl(\R_+; L^{p}(\R^d)\bigr).$ 
\medbreak
Note however that \eqref{eq:prcrit}  
   gives the constraint $d/3<p\leq d,$
which   is too restrictive for our scopes:  we will need $p>d$ in order to guarantee that 
$\nabla u$ is in $L^1\bigl([0,T];L^\infty(\R^d)\bigr)$ for some $T>0$ (see Section \ref{s:max-reg}
for more details). This fact will preclude us from working in the critical regularity setting. 

Before going on, let us introduce more notations:  throughout this section,  we will use 
an index $j$ to designate the regularity of Lebesgue exponents   $(p_j,r_j)\in\,[1,\infty]^2$  pertaining to the term $\nabla^jh$.

According to Proposition \ref{prop1.1},  if  $u_0$ is in $\dot B^{s_2}_{p_2,r_2}$ with  $s_2\,=\,2-2/r_2$ and $1<p_2,r_2<\infty,$  then
\begin{equation} \label{est:nabla^2-h}
\nabla^2 e^{t\Delta}u_0\,\in\,L^{r_2}\bigl(\R_+;L^{p_2}(\R^d)\bigr)\cdotp
\end{equation}
Furthermore, we have 
\begin{equation}\label{est:ch}
e^{t\Delta}u_0\in\cC_b(\R_+;\dot B^{s_2}_{p_2,r_2})\end{equation} since 
$$\|e^{t\Delta}u_0\|_{\dot B^{2-\frac2{r_2}}_{p_2,r_2}}\sim \|\nabla^2e^{t\Delta}u_0\|_{\dot B^{-\frac2{r_2}}_{p_2,r_2}}
\sim \biggl(\int_{\R_+} \big\|e^{\tau\Delta}(\Delta e^{t\Delta} u_0)\big\|_{L^{p_2}}^{r_2}\,d\tau\biggr)^{\frac1{r_2}}$$ 
and, using the fact that the heat semi-group is contractive on $L^{p_2},$ 
$$
\int_{\R_+} \big\|e^{\tau\Delta}(\Delta e^{t\Delta} u_0)\big\|_{L^{p_2}}^{r_2}\,d\tau
\leq \int_{\R_+} \big\|e^{\tau\Delta}\Delta u_0\big\|_{L^{p_2}}^{r_2}\,d\tau \leq C\| \Delta u_0\|_{\dot B^{2-\frac2{r_2}}_{p_2,r_2}}^{r_2}.
$$
Time continuity in \eqref{est:ch}  just follows from  the fact that  $\cS$ is densely embedded in $L^{p_2}.$
\medbreak
Next, by the embedding properties of Proposition \ref{p:embed}, we  have, if  $p_1\geq p_2$ and $r_1\geq r_2$,
$$
\nabla u_0\,\in\,\dot B^{s_1}_{p_1,r_1}\qquad\mbox{ with }\qquad s_1\,=\,1-\frac{2}{r_2}-d\left(\frac{1}{p_2}-\frac{1}{p_1}\right)\cdotp
$$
In order to be in position of applying Proposition \ref{prop1.1} so as to get that 
$\nabla e^{t \Delta}u_0\,\in\,L^{r_1}\bigl(\R_+;L^{p_1}(\R^d)\bigr),$
 we  need to have  in addition  $s_1<0,$  that is to say
\begin{equation} \label{cond:r_2-p_0}
\frac{2}{r_2}\,+\,\frac{d}{p_2}\,-\,\frac{d}{p_1}\,>\,1.
\end{equation}
Then, defining $r_1$ by 
\begin{equation} \label{cond:HLS_1}
\frac{2}{r_2}\,+\,\frac{d}{p_2}\;=\;1\,+\,\frac{2}{r_1}\,+\,\frac{d}{p_1}, 
\end{equation}
  we get 
\begin{equation} \label{est:nabla-h}
\nabla e^{t \Delta}u_0\,\in\,L^{r_1}\bigl(\R_+;L^{p_1}(\R^d)\bigr)\cdotp
\end{equation}
Finally, let us consider  $e^{t\Delta}u_0.$
By critical embedding, we have, if $p_0\geq p_2$ and $r_0\geq r_2,$ 
$$
u_0\,\in\,\dot B^{s_0}_{p_0,r_0}\qquad\mbox{ with }\qquad s_0\,=\,2-\frac{2}{r_2}-d\left(\frac{1}{p_2}-\frac{1}{p_0}\right)\cdotp
$$
If we want to resort again to Proposition \ref{prop1.1},  we need to have in addition that 
\begin{equation} \label{cond:r_2-p_1}
s_0\,=\,2\,-\,\frac{2}{r_2}\,-\,\frac{d}{p_1}\,<\,0\,.
\end{equation}
Under that condition, choosing $r_0$ so that  $s_0\,=\,-2/r_0,$ that is to say such that
 \begin{equation} \label{cond:HLS_0}
\frac{2}{r_2}\,+\,\frac{d}{p_2}\;=\;2\,+\,\frac{2}{r_0}\,+\,\frac{d}{p_0},
\end{equation}
 we end up with 
\begin{equation} \label{est:h}
e^{t \Delta}u_0\,\in\,L^{r_0}\bigl(\R_+;L^{p_0}(\R^d)\bigr)\,.
\end{equation}
Let us next consider the propagation of regularity for the forcing term in the heat equation. We start by presenting the standard  maximal $L^r(L^p)$ regularity for the heat semi-group
(see the proof in  \cite[Lem. 7.3]{Lem} for instance).
\begin{lem}\label{l:max-reg_2}
Let us define the operator $\cA_2$ by the formula
$$
\mc A_2\,:\quad f\,\longmapsto\,\int_0^t \nabla^2 e^{(t-s)\Delta}f(s,\cdotp)\,ds\,.
$$

Then $\mc A_2$ is bounded from  $L^{r_2}\bigl(\,]0,T[\,; L^{p_2}(\R^d)\bigr)$ to $L^{r_2}\bigl(\,]0,T[\,; L^{p_2}(\R^d)\bigr)$ for every $T\in\,]0,\infty]$ and $1\,<\,p_2,r_2\,<\,\infty$.
Moreover, there holds:
\beno \|\cA_2 f\|_{L^{r_2}_T(L^{p_2})}\,\leq\,C\|f\|_{L^{r_2}_T(L^{p_2})}\,.\eeno
\end{lem}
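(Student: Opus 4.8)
The plan is to prove Lemma \ref{l:max-reg_2} by the classical operator-valued Calder\'on--Zygmund argument. First I would reduce to the case $T=\infty$: given $f\in L^{r_2}(]0,T[;L^{p_2})$, extend it by $0$ to all of $\R$; since the kernel of $\cA_2$ is causal (only the values $s\in{]0,t[}$ enter), the function $\cA_2f$ restricted to $]0,T[$ is unchanged by this extension, so it is enough to bound, on $L^{r_2}(\R;L^{p_2}(\R^d))$, the convolution in the time variable
$$
\cA_2f(t)\,=\,\int_\R K(t-s)\,f(s)\,ds\,,\qquad K(\tau)\,:=\,\mathbf{1}_{\{\tau>0\}}\,\nabla^2e^{\tau\Delta}\,,
$$
where, for each $\tau>0$, $K(\tau)$ is a bounded operator on $L^{p_2}(\R^d)$.

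Step one: the diagonal bound $r_2=p_2$. Here $\cA_2$ acts on scalar functions of $(t,x)$ and is convolution with the space-time kernel $k(t,x)=\mathbf{1}_{\{t>0\}}\,\nabla_x^2\bigl[(4\pi t)^{-d/2}e^{-|x|^2/(4t)}\bigr]$. A direct computation with the Gaussian gives $|k(t,x)|\lesssim(|t|^{1/2}+|x|)^{-(d+2)}$ together with the matching derivative bounds, so $k$ is a Calder\'on--Zygmund kernel for the parabolic quasi-distance $\rho\bigl((t,x),(s,y)\bigr)=|t-s|^{1/2}+|x-y|$, which makes $\R^{d+1}$ a space of homogeneous type. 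The Fourier symbol of $\cA_2$ is $-\xi_i\xi_j/(i\tau+|\xi|^2)$, bounded uniformly in $(\tau,\xi)$, hence $\cA_2$ is bounded on $L^2(\R^{d+1})$ by Plancherel; the Calder\'on--Zygmund theorem on spaces of homogeneous type then yields boundedness on $L^{p_2}(\R^{d+1})=L^{p_2}(\R;L^{p_2}(\R^d))$ for every $1<p_2<\infty$.

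Step two: the mixed-norm bound. Fix $1<p_2<\infty$ and view $\cA_2$ as a convolution in $t$ with the $\mathcal{L}(L^{p_2}(\R^d))$-valued kernel $K$. Scaling (or Young's inequality, using that the convolution kernel of $\nabla^2e^{\tau\Delta}$ has $L^1(\R^d)$ norm $\sim\tau^{-1}$) gives $\|K(\tau)\|_{L^{p_2}\to L^{p_2}}\lesssim\tau^{-1}$, and since $\partial_\tau K(\tau)=\Delta\nabla^2e^{\tau\Delta}$ for $\tau>0$ one likewise gets $\|\partial_\tau K(\tau)\|_{L^{p_2}\to L^{p_2}}\lesssim\tau^{-2}$. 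These two estimates imply the one-dimensional H\"ormander condition $\sup_{\sigma\neq0}\int_{|\tau|>2|\sigma|}\|K(\tau-\sigma)-K(\tau)\|_{L^{p_2}\to L^{p_2}}\,d\tau<\infty$. Combined with the boundedness on $L^{p_2}(\R;L^{p_2}(\R^d))$ from step one, the vector-valued Calder\'on--Zygmund theorem of Benedek--Calder\'on--Panzone upgrades this to boundedness on $L^{r_2}(\R;L^{p_2}(\R^d))$ for every $1<r_2<\infty$, with the claimed inequality; restricting back to $]0,T[$ concludes.

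The main obstacle is really just bookkeeping: verifying the kernel estimates of step one from the explicit Gaussian form (which also makes the constants in step two manifestly finite), and invoking the operator-valued singular-integral machinery correctly. Alternatively, and even more quickly, one may observe that $L^{p_2}(\R^d)$ is a UMD space for $1<p_2<\infty$ and that $\Delta$ generates a bounded analytic (indeed positive contraction) semigroup there, so the abstract maximal $L^{r_2}$-regularity theorem (of Weis, or of De Simon when $r_2=2$) gives the statement at once; I would mention this route as a remark but carry out the elementary kernel argument above, consistent with the ``elementary tools'' philosophy of Subsection \ref{ss:max}.
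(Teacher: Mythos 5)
Your argument is correct. Note, however, that the paper does not actually prove Lemma \ref{l:max-reg_2}: it invokes it as a classical fact and refers to \cite[Lem. 7.3]{Lem} for the proof, so there is no internal argument to compare against. What you write is precisely the standard route taken in such references --- $L^2(\R^{1+d})$ boundedness via the symbol $-\xi_i\xi_j/(i\tau+|\xi|^2)$ and Plancherel, Calder\'on--Zygmund theory for the parabolic quasi-distance to get the diagonal case $r_2=p_2$, and then the Benedek--Calder\'on--Panzone vector-valued theorem with the operator-valued kernel bounds $\|K(\tau)\|_{L^{p_2}\to L^{p_2}}\lesssim\tau^{-1}$, $\|\partial_\tau K(\tau)\|_{L^{p_2}\to L^{p_2}}\lesssim\tau^{-2}$ to decouple the time exponent; the causality/extension-by-zero reduction to $T=\infty$ and the UMD/Weis remark are also fine.
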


As for the propagation of regularity for the first derivatives, we have the following statement. 
\begin{lem}\label{l:max-reg_1} 
Assume that the Lebesgue exponents $p_1$ and $p_2$ fulfill $0\leq 1/p_2-1/p_1<1/d,$ and that $1<r_2 < r_1< \infty$ are  interrelated by the relation  \eqref{cond:HLS_1}. Let us define the operator $\cA_1$ by
$$
\mc A_1\,:\quad f\,\mapsto\,\int_0^t \nabla e^{(t-s)\Delta}f(s,\cdotp)\,ds\,.
$$

Then $\mc A_1$ is bounded from  $L^{r_2}\bigl(\,]0,T[\,; L^{p_2}(\R^d)\bigr)$ to $L^{r_1}\bigl(\,]0,T[\,; L^{p_1}(\R^d)\bigr)$ for every $T\in\,]0,\infty]$, and there holds
\beno \|\cA_1 f\|_{L^{r_1}_T(L^{p_1})}\,\leq\,C\,\|f\|_{L^{r_2}_T(L^{p_2})}\,,
\eeno
for a suitable constant $C>0$ depending only on the space dimension $d\geq1$ and on $p_1,r_1,p_2,r_2$.
\end{lem}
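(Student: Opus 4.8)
The plan is to reduce the mapping property of $\mc A_1$ to the scalar-valued maximal regularity of Lemma \ref{l:max-reg_2} composed with the gain of integrability coming from the heat kernel smoothing. First I would write $\nabla e^{(t-s)\Delta} = \nabla e^{\frac{(t-s)}{2}\Delta}\circ e^{\frac{(t-s)}{2}\Delta}$ and use that for each $\tau>0$ the operator $\nabla e^{\tau\Delta}$ acts from $L^{p_2}$ to $L^{p_1}$ with norm $\lesssim \tau^{-\frac12-\frac d2(\frac1{p_2}-\frac1{p_1})}$ by Young's convolution inequality applied to the kernel of $\nabla e^{\tau\Delta}$ (whose $L^q$ norm, with $1/q = 1-1/p_2+1/p_1$, scales like $\tau^{-1/2-d/2(1/p_2-1/p_1)}$). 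Setting $\alpha := \frac12 + \frac d2\bigl(\frac1{p_2}-\frac1{p_1}\bigr)$, the hypothesis $0\le 1/p_2-1/p_1 < 1/d$ gives $\frac12 \le \alpha < 1$, so the kernel is integrable near $0$; the relation \eqref{cond:HLS_1} is exactly $\frac1{r_1} = \frac1{r_2} - (1-\alpha)$, i.e. $1-\alpha \in (0,1)$ is the Hardy--Littlewood--Sobolev exponent linking $r_2$ and $r_1$.

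The core estimate is then
$$\|\mc A_1 f(t)\|_{L^{p_1}} \,\lesssim\, \int_0^t (t-s)^{-\alpha}\,\bigl\|e^{\frac{(t-s)}{2}\Delta}f(s,\cdot)\bigr\|_{L^{p_2}}\,ds\,.$$
Bounding $\|e^{\frac{(t-s)}{2}\Delta}f(s,\cdot)\|_{L^{p_2}} \le \|f(s,\cdot)\|_{L^{p_2}}$ by contractivity of the heat semigroup on $L^{p_2}$ reduces matters to the one-dimensional-in-time convolution $t \mapsto \int_0^t (t-s)^{-\alpha} g(s)\,ds$ with $g := \|f(\cdot)\|_{L^{p_2}} \in L^{r_2}(0,T)$. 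By the Hardy--Littlewood--Sobolev inequality on $\R$ (extending $g$ by zero outside $(0,T)$), this convolution maps $L^{r_2}$ to $L^{r_1}$ with $\frac1{r_1} = \frac1{r_2} - (1-\alpha)$, provided $1 < r_2 < r_1 < \infty$ — which are precisely the hypotheses — giving $\|\mc A_1 f\|_{L^{r_1}_T(L^{p_1})} \le C\|f\|_{L^{r_2}_T(L^{p_2})}$ with $C$ depending only on $d,p_1,p_2,r_1,r_2$, and uniform in $T\in(0,\infty]$ since HLS on $\R$ carries no dependence on the length of the interval.

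The one point requiring a little care — and the only genuine obstacle — is the endpoint-type behaviour when $\alpha = 1/2$ (i.e. $p_1 = p_2$): then one still has $1-\alpha = 1/2 \in (0,1)$, so HLS applies and there is no actual degeneracy, but one must make sure the convolution kernel estimate $\|\text{kernel of }\nabla e^{\tau\Delta}\|_{L^q} \lesssim \tau^{-\alpha}$ is used with the correct $q \in [1,\infty]$ and that Young's inequality is invoked in the admissible range; this is where the strict inequality $1/p_2 - 1/p_1 < 1/d$ (keeping $\alpha < 1$) and $r_2 < r_1$ (keeping the HLS exponent in $(0,1)$ and away from the forbidden endpoints) are used. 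Alternatively, and perhaps more cleanly, one can obtain the same conclusion by interpolating Lemma \ref{l:max-reg_2} with the trivial bound on $\mc A_1$ coming from the semigroup smoothing, but the direct kernel plus HLS argument is the most transparent route; I would present that one.
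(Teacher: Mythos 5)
Your proof is correct and follows essentially the same route as the paper's: a Young-inequality bound $\|\nabla e^{\tau\Delta}\|_{L^{p_2}\to L^{p_1}}\lesssim \tau^{-\beta}$ with $\beta=\frac12+\frac d2\bigl(\frac1{p_2}-\frac1{p_1}\bigr)\in[\frac12,1[\,$, followed by the Hardy--Littlewood--Sobolev inequality in the time variable, with \eqref{cond:HLS_1} supplying exactly the required HLS relation. The intermediate splitting $\nabla e^{(t-s)\Delta}=\nabla e^{(t-s)\Delta/2}\circ e^{(t-s)\Delta/2}$ is harmless but unnecessary, since you immediately discard the second factor by contractivity.
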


\begin{proof}
We use the fact that for all $0\leq s\leq t\leq T,$
$$\begin{aligned}
 \na e^{(t-s)\Delta}f(s,x)\,&=\,\f{\sqrt{\pi}}{(4\,\pi\,(t-s))^{\f{d+1}2}}\int_{\R^d}\f{(x-y)}{2\,\sqrt{(t-s)}}\,
\exp\Bigl(-\f{|x-y|^2}{4\,(t-s)}\Bigr)\,f(s,y)\,dy \\
&\eqdefa\, \f{\sqrt{\pi}}{(4\,\pi\,(t-s))^{\f{d+1}2}}\,
K_1\biggl(\f{\cdot}{\sqrt{4\,(t-s)}}\biggr)\ast_x f(s,\cdot)\,. \end{aligned}$$

Applying Young's inequality in the space variables yields, 
\begin{align*}
\left\|\na e^{(t-s)\Delta}\,\left(\mathds{1}_{[0,T]}\,f\right)(s,\cdot)\right\|_{L^{p_1}}\,&\leq\,
C((t-s))^{-(d+1)/2}\biggr\|K_1\biggl(\f{\cdot}{\sqrt{4\pi(t-s)}}\biggr)\biggr\|_{L^{m_1}}\,\|\mathds1_{[0,T]}\,f\|_{L^{p_2}} \\
&\leq\,C((t-s))^{-\beta}\,\|\mathds1_{[0,T]}\,f\|_{L^{p_2}}\,,
\end{align*}
where we have defined $m_1$ and $\beta$ by  
$$\frac1{p_1}\,+\,1\,=\,\frac1{m_1}\,+\,\frac1{p_2}\quad\hbox{and}\quad\beta\,=\,\frac{d+1}2\,-\,\frac d{2m_1}
\,=\,\frac12\,+\,\frac d{2p_2}\,-\,\frac d{2p_1}\cdotp$$
Note that the conditions in the lemma ensure that $\beta\in[1/2,1[.$ 
At this point, we apply Hardy-Littlewood-Sobolev inequality (see e.g. Theorem 1.7 of \cite{B-C-D}) with respect to time: since $r_1$ and $r_2$ verify
$1/r_1\,+\,1\,=\,1/r_2\,+\,\beta$ by hypothesis \eqref{cond:HLS_1}, we immediately get the claimed inequality.
The lemma is thus proved.
\end{proof}

We now state integrability properties concerning $f$ itself, without taking any derivative.
\begin{lem}\label{l:max-reg_0} Assume that the Lebesgue exponents $p_0$ and $p_2$ fulfill $0\leq 1/p_2-1/p_0<2/d,$ and that $1<r_2 < r_0< \infty$ 
 are  interrelated by the relation \eqref{cond:HLS_0}.
Define the operator $\cA_0$ by the formula
$$
\mc A_0\,:\quad f\,\longmapsto\,\int_0^t e^{(t-s)\Delta}f(s,\cdotp)\,ds\,.
$$

Let $s_2:=2-2/r_2.$ Then $\mc A_0$ is bounded from  $L^{r_2}\bigl(\,]0,T[\,; L^{p_2}\bigr)$ to $L^{r_0}\bigl(\,]0,T[\,; L^{p_0}\bigr)\times \cC([0,T];\dot B^{s_2}_{p_2,r_2})$ for every $T\in\,]0,\infty]$, and there holds
\beno   \|\cA_0f\|_{L^\infty_T(\dot B^{s_2}_{p_2,r_2})}+ \|\cA_0 f\|_{L^{r_0}_T(L^{p_0})}\,\leq\,C\,\|f\|_{L^{r_2}_T(L^{p_2})}\,,
\eeno
for a suitable constant $C>0$ depending  on the space dimension $d\geq1$ and on $p_0, r_0,p_2,r_2$.
\end{lem}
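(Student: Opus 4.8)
The plan is to establish separately the two bounds $\|\mc A_0 f\|_{L^{r_0}_T(L^{p_0})}\le C\|f\|_{L^{r_2}_T(L^{p_2})}$ and $\|\mc A_0 f\|_{L^\infty_T(\dot B^{s_2}_{p_2,r_2})}\le C\|f\|_{L^{r_2}_T(L^{p_2})}$, and then to upgrade the latter into continuity by a density argument. For the first bound I would argue exactly as in the proof of Lemma~\ref{l:max-reg_1}: Young's inequality applied to $e^{(t-s)\Delta}f(s,\cdot)=G_{t-s}\ast f(s,\cdot)$ gives $\|e^{(t-s)\Delta}f(s,\cdot)\|_{L^{p_0}}\le C\,(t-s)^{-\gamma}\,\|f(s,\cdot)\|_{L^{p_2}}$ with $\gamma:=\frac d2(1/p_2-1/p_0)$, and the hypothesis $0\le 1/p_2-1/p_0<2/d$ precisely ensures $\gamma\in[0,1[\,$. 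One is then left with bounding, in $L^{r_0}(0,T)$, the convolution of $s\mapsto\|f(s)\|_{L^{p_2}}\in L^{r_2}(0,T)$ against the time kernel $t^{-\gamma}$; since relation~\eqref{cond:HLS_0} is nothing but $1+1/r_0=1/r_2+\gamma$, the Hardy--Littlewood--Sobolev inequality (Theorem~1.7 of \cite{B-C-D}) applies and yields the claim.

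The delicate point is the $\dot B^{s_2}_{p_2,r_2}$ estimate. The naive attempt --- combining the smoothing bound $\|e^{\eta\Delta}g\|_{\dot B^{s_2}_{p_2,r_2}}\le C\,\eta^{-(1-1/r_2)}\|g\|_{L^{p_2}}$ with Minkowski's inequality --- fails, because the resulting time kernel $\eta^{-(1-1/r_2)}$ sits exactly at the endpoint (it lies in weak $L^{r_2'}$ but not in $L^{r_2'}$), so that neither H\"older nor Young can be invoked. Instead I would rely on the semigroup characterisation of the Besov norm already used in the derivation of \eqref{est:ch}: by Proposition~\ref{prop1.1} applied with $z=\Delta g$ and $s=2/r_2$, one has $\|g\|_{\dot B^{s_2}_{p_2,r_2}}\sim\bigl(\int_0^\infty\|\Delta e^{\tau\Delta}g\|_{L^{p_2}}^{r_2}\,d\tau\bigr)^{1/r_2}$ for every $g$. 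Taking $g=\mc A_0f(t)$, commuting $\Delta e^{\tau\Delta}$ with the time integral through the semigroup law, and using the elementary bound $\|\Delta e^{\eta\Delta}h\|_{L^{p_2}}\le C\,\eta^{-1}\|h\|_{L^{p_2}}$, one controls $\|\mc A_0f(t)\|_{\dot B^{s_2}_{p_2,r_2}}$ by the $L^{r_2}(\R_+;d\tau)$ norm of $\tau\mapsto\int_0^t(t-s+\tau)^{-1}\|f(s)\|_{L^{p_2}}\,ds$. After the substitution $u=t-s$ and extension by zero, the latter is the image of $s\mapsto\|f(s)\|_{L^{p_2}}$ under the integral operator on $L^{r_2}(\R_+)$ with kernel $(\tau+u)^{-1}$; being homogeneous of degree $-1$, this operator is bounded on $L^{r_2}(\R_+)$ for every $1<r_2<\infty$ by Schur's test, since $\int_0^\infty(1+u)^{-1}\,u^{-1/r_2}\,du<\infty$. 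This gives $\|\mc A_0f(t)\|_{\dot B^{s_2}_{p_2,r_2}}\le C\,\|f\|_{L^{r_2}_t(L^{p_2})}$ uniformly in $t\in[0,T]$, hence the $L^\infty_T(\dot B^{s_2}_{p_2,r_2})$ bound.

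Finally, continuity of $t\mapsto\mc A_0f(t)$ in $\dot B^{s_2}_{p_2,r_2}$ follows from this uniform bound together with the density of the smooth functions compactly supported in $]0,T[\times\R^d$ in $L^{r_2}_T(L^{p_2})$, as $\mc A_0f$ is obviously continuous with values in $\dot B^{s_2}_{p_2,r_2}$ for such smooth source terms. I expect Step~2 to be the only genuine obstacle; the rest is a routine variant of Lemmas~\ref{l:max-reg_2}--\ref{l:max-reg_1}. As an alternative route to Step~2 one could note that $\mc A_0f$ solves the heat equation with source $f$ and zero initial datum, invoke Lemma~\ref{l:max-reg_2} to get $\partial_t\mc A_0f,\nabla^2\mc A_0f\in L^{r_2}_T(L^{p_2})$, and then apply the trace theorem for maximal-regularity spaces, which identifies the trace space with the real-interpolation space $B^{2-2/r_2}_{p_2,r_2}$; since $s_2=2-2/r_2>0$ one has $B^{s_2}_{p_2,r_2}=L^{p_2}\cap\dot B^{s_2}_{p_2,r_2}$, whence the desired continuity into $\dot B^{s_2}_{p_2,r_2}$.
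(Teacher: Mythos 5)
Your proof is correct, and the first step (Young's inequality in space giving the kernel $(t-s)^{-\gamma}$ with $\gamma=\frac d2(1/p_2-1/p_0)$, followed by Hardy--Littlewood--Sobolev in time via \eqref{cond:HLS_0}) is exactly the paper's argument for the $L^{r_0}_T(L^{p_0})$ bound. For the Besov estimate, both you and the paper start from the semigroup characterisation of $\dot B^{s_2}_{p_2,r_2}$ given by Proposition \ref{prop1.1}, but you then diverge: the paper performs the change of variable $t'=t+T$ so that the quantity $\bigl(\int_0^\infty\|\int_0^T\Delta e^{(t+T-\tau)\Delta}f(\tau)\,d\tau\|_{L^{p_2}}^{r_2}\,dt\bigr)^{1/r_2}$ is recognised, after extending $f$ by zero, as (part of) the $L^{r_2}(\R_+;L^{p_2})$ norm of $\cA_2(f\mathds 1_{[0,T]})$, and then simply invokes the maximal regularity Lemma \ref{l:max-reg_2}; you instead use the elementary smoothing bound $\|\Delta e^{\eta\Delta}h\|_{L^{p_2}}\leq C\eta^{-1}\|h\|_{L^{p_2}}$ and reduce matters to the boundedness on $L^{r_2}(\R_+)$ of the integral operator with kernel $(\tau+u)^{-1}$, i.e.\ to Hilbert's inequality. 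Your route is more self-contained (it never uses the genuinely singular maximal-regularity estimate: the kernel $(\tau+u)^{-1}$ on the positive half-line is a Schur-test kernel, not a singular integral), whereas the paper's is shorter given that Lemma \ref{l:max-reg_2} has already been established. The concluding density argument for time continuity, and the alternative via the trace space $B^{2-2/r_2}_{p_2,r_2}$ of the maximal-regularity class, are both standard and fine; the only cosmetic slip is the claim $\gamma\in[0,1[$ for the first step, since HLS needs $\gamma>0$ --- but $\gamma=0$ is in any case incompatible with \eqref{cond:HLS_0} and $r_0<\infty$, so nothing is lost.
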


\begin{proof}
 The proof of the continuity in $L^{r_0}(]0,T[;L^{p_0})$ goes as in Lemma \ref{l:max-reg_1}:   we start by writing
$$\begin{aligned}
e^{(t-s)\Delta}f(s,x)\,&=\,\f{1}{(4\,\pi\,(t-s))^{\f{d}2}}\int_{\R^d}\exp\Bigl(-\f{|x-y|^2}{4\,(t-s)}\Bigr)\,f(s,y)\,dy \\
&\eqdefa\, \f{1}{(4\,\pi\,(t-s))^{\f{d}2}}\;K_0\biggl(\f{\cdot}{\sqrt{4\,(t-s)}}\biggr)\ast_x f(s,\cdot)\,. \end{aligned}
$$
Then, we apply Young's inequality in the space variables and get, for every $s>0$ fixed,
\begin{align*}
\left\|e^{(t-s)\Delta}\,\left(\mathds{1}_{[0,T]}\,f\right)(s,\cdot)\right\|_{L^{p_0}}\,&\leq\,
C((t-s))^{-d/2}\,\biggl\|K_0\biggl(\f{\cdot}{\sqrt{4\pi(t-s)}}\biggr)\biggr\|_{L^{m_0}}\,\|\mathds1_{[0,T]}\,f\|_{L^{p_2}} \\
&\leq\,C((t-s))^{-\g}\,\|\mathds1_{[0,T]}\,f\|_{L^{p_2}}\,,
\end{align*}
where, exactly as before, we have defined $m_0$ and $\g$ by the relations 
$$1+\frac1{p_0}\,=\,\frac1{m_0}\,+\,\frac1{p_2}\quad\hbox{and}\quad
\g=\frac d{2p_2}-\frac d{2p_0}\cdotp$$
Our assumptions ensure that  $\g\in\,]0,1[\,$ and one may  apply  Hardy-Littlewood-Sobolev inequality with respect to time. Since $r_0$ and $r_2$ verify
$1/r_0\,+\,1\,=\,1/r_2\,+\,\g$ by hypothesis \eqref{cond:HLS_0}, we immediately get  that  $\cA_0$ is bounded 
from $L^{r_2}_T(L^{p_2})$ to $L^{r_0}_T(L^{p_0}).$ 
\medbreak 
The second part of the statement is classical. Arguing by density, it suffices to establish that 
$$   \|\cA_0f(T)\|_{\dot B^{s_2}_{p_2,r_2}}\,\leq\,C\,\|f\|_{L^{r_2}_T(L^{p_2})}.$$
To this end, we write, using Proposition \ref{prop1.1} and  an obvious change of variable,  that 
$$\begin{aligned}
  \|\cA_0f(T)\|_{\dot B^{s_2}_{p_2,r_2}} &\sim \biggl(\int_{0}^\infty \big\|\Delta e^{ t\Delta} \cA_0f(T)\bigr\|_{L^{p_2}}^{r_2}\,dt\biggr)^{\frac1{r_2}}\\
 &\sim \biggl(\int_{0}^\infty \bigg\|\int_0^T\Delta e^{ (t+T-\tau)\Delta} f(\tau)\,d\tau\biggr\|_{L^{p_2}}^{r_2}\,dt\biggr)^{\frac1{r_2}}\\
 &\leq C \biggl(\int_{T}^\infty \bigg\|\int_0^{t'}\Delta e^{(t'-\tau)\Delta} (f\mathds{1}_{[0,T]})(\tau)\,d\tau\biggr\|_{L^{p_2}}^{r_2}\,dt\biggr)^{\frac1{r_2}}\cdotp
\end{aligned}$$
Then using Lemma \ref{l:max-reg_2} to bound the right-hand side yields the claimed inequality.\end{proof}


\subsubsection{Maximal regularity results for operator $\mc L$} \label{sss:max-L}
Here we want to extend the results of the previous  paragraph to the elliptic operator  $\mc L\,=\,-\mu\Delta-\lambda\nabla\div,$ under Condition \eqref{lame}:
for suitable initial datum $h_0$ and external force $f$,  let us consider the equation
\begin{equation}\label{eq:lame}
\left\{\begin{array}{l}
\pa_t h\,+\,\cL h\,=\,f \\[1ex]
h_{|t=0}\,=\,h_0\,.
\end{array}
\right.
\end{equation}
The following statement will be a key ingredient in the proof of our existence result.
\begin{prop} \label{p:max-reg}
Let $\bigl((p_j,r_j)\bigr)_{j=0,1,2}$ 
satisfy  $1<p_2,r_2<\infty,$ $r_2<r_0,$ $r_2<r_1,$ $p_0\geq p_2,$ $p_1\geq p_2,$ and
the relations \eqref{cond:HLS_1} and \eqref{cond:HLS_0}.  Let  $h_0$ be in $\dot B^{s_2}_{p_2,r_2}$ with $s_2:=2-2/r_2,$ and
let $f$ be in $L^{r_2}_{\rm loc}\bigl(\R_+;L^{p_2}(\R^d)\bigr)$. Let $(\mu,\lambda)\in\R^2$ satisfy condition \eqref{lame}.

Then, for all $\,T>0$,  System \eqref{eq:lame} has a unique solution 
$h$ in $\cC([0,T];\dot B^{s_2}_{p_2,r_2})\cap L^{r_0}\bigl([0,T];L^{p_0}\bigr)$, with $\nabla h\,\in\,L^{r_1}\bigl([0,T];L^{p_1}\bigr)$ and
$\pa_th\,,\,\nabla^2h\,\in\,L^{r_2}\bigl([0,T];L^{p_2}\bigr)$.
Moreover, there exists a constant $C_0>0$ (depending just on $\mu$, $\lambda$, $d$, $p_0,$ $p_1,$ $p_2$ and $r_2$) such that the following estimate holds true:
\begin{multline} \label{est:max-reg_h}
\left\|h\right\|_{L^{\infty}_T(\dot B^{s_2}_{p_2,r_2})}\,+\,\left\|h\right\|_{L^{r_0}_T(L^{p_0})}\,+\,\left\|\nabla h\right\|_{L^{r_1}_T(L^{p_1})}\,+\,
\left\|\left(\pa_th,\nabla^2h\right)\right\|_{L^{r_2}_T(L^{p_2})}\\\,\leq\,C_0\,
\left(\left\|h_0\right\|_{\dot B^{s_2}_{p_2,r_2}}\,+\,\|f\|_{L^{r_2}_T(L^{p_2})}\right)\cdotp
\end{multline}
\end{prop}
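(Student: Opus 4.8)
The plan is to reduce \eqref{eq:lame} to two decoupled heat equations by means of the Helmholtz--Leray decomposition, and then to quote, up to harmless rescalings and projections, the estimates of Paragraph \ref{sss:max-D}. Write $\mathbb P:=\Id+\nabla(-\Delta)^{-1}\div$ for the projector onto divergence-free vector fields and $\mathbb Q:=\Id-\mathbb P=-\nabla(-\Delta)^{-1}\div$ for the projector onto gradient fields. As $\mathbb P$ and $\mathbb Q$ commute with $\pa_t$ and with $\cL$, and as $\cL$ reduces to $-\mu\Delta$ on divergence-free fields and to $-\nu\Delta$ on gradient fields (with $\nu=\lambda+\mu$), applying $\mathbb P$ and $\mathbb Q$ to \eqref{eq:lame} yields
$$\pa_t(\mathbb P h)-\mu\Delta(\mathbb P h)=\mathbb P f,\qquad \pa_t(\mathbb Q h)-\nu\Delta(\mathbb Q h)=\mathbb Q f,$$
with data $\mathbb P h_0$, $\mathbb Q h_0$. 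Condition \eqref{lame} ensures $\mu>0$ and $\nu>0$, so these are genuine heat equations; consequently $h=\mathbb P h+\mathbb Q h$ is given by the Duhamel formula $h(t)=e^{-t\cL}h_0+\int_0^t e^{-(t-s)\cL}f(s)\,ds$, where $e^{-t\cL}=\mathbb P\,e^{\mu t\Delta}+\mathbb Q\,e^{\nu t\Delta}$.

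Next I would transfer each building block of Paragraph \ref{sss:max-D} from $e^{t\Delta}$ to $e^{\mu t\Delta}$ and $e^{\nu t\Delta}$. Proposition \ref{prop1.1} and its consequences \eqref{est:nabla^2-h}--\eqref{est:h}, as well as Lemmas \ref{l:max-reg_2}, \ref{l:max-reg_1} and \ref{l:max-reg_0}, all survive after the change of variable $t\mapsto\mu t$ (resp. $t\mapsto\nu t$) in the time integral, at the sole price of letting the constants depend on $\mu$ and $\lambda$. Moreover $\mathbb P$ and $\mathbb Q$ are Calder\'on--Zygmund operators, hence bounded on $L^{p_j}$ for $1<p_j<\infty$, and, being Fourier multipliers that are smooth and homogeneous of degree $0$ away from the origin, they are bounded on every homogeneous Besov space $\dot B^s_{p,r}$ (see \cite[Chap.~2]{B-C-D}). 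Composing these mapping properties with the rescaled heat estimates applied to $\mathbb P h_0,\mathbb Q h_0\in\dot B^{s_2}_{p_2,r_2}$ and to $\mathbb P f,\mathbb Q f\in L^{r_2}_T(L^{p_2})$ gives, term by term, the control of $h$ in $\cC([0,T];\dot B^{s_2}_{p_2,r_2})$, of $\nabla h$ in $L^{r_1}_T(L^{p_1})$ and of $\nabla^2h$ in $L^{r_2}_T(L^{p_2})$; time continuity follows, as in \eqref{est:ch} and Lemma \ref{l:max-reg_0}, from the density of $\cS$. Finally $\pa_t h=f-\cL h$ together with $\|\cL h\|_{L^{p_2}}\lesssim\|\nabla^2h\|_{L^{p_2}}$ yields the bound on $\pa_t h$, and adding everything gives \eqref{est:max-reg_h}.

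The only point that is \emph{not} purely mechanical, and the place I expect to spend the most effort, is the control of $h$ in $L^{r_0}_T(L^{p_0})$ in the endpoint case $p_0=\infty$, where $\mathbb P$ and $\mathbb Q$ fail to be bounded on $L^{p_0}$. For the forcing term I would circumvent this by factoring, for $\tau>0$,
$$\mathbb Q\,e^{\tau\Delta}=e^{(\tau/2)\Delta}\circ\bigl(\mathbb Q\,e^{(\tau/2)\Delta}\bigr)\qquad\text{(and similarly for }\mathbb P\text{)};$$
the inner operator is bounded on $L^{p_2}$ \emph{uniformly in} $\tau$ (a product of the $L^{p_2}$-bounded projector with a heat contraction), while $\|e^{(\tau/2)\Delta}g\|_{L^\infty}\lesssim\tau^{-d/(2p_2)}\|g\|_{L^{p_2}}$ is the usual Gaussian smoothing, so $\|\mathbb Q\,e^{\mu(t-s)\Delta}f(s)\|_{L^\infty}\lesssim(t-s)^{-d/(2p_2)}\|f(s)\|_{L^{p_2}}$; the Hardy--Littlewood--Sobolev inequality in time, whose exponent balance is exactly \eqref{cond:HLS_0} read with $1/p_0=0$, then closes the estimate of the Duhamel integral in $L^{r_0}_T(L^\infty)$ (the pure heat contribution $e^{\mu(t-s)\Delta}f$ being the case $p_0=\infty$ of Lemma \ref{l:max-reg_0}, which goes through since the Gaussian lies in every $L^m$). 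For the initial datum, one uses that $\mathbb P$ and $\mathbb Q$ map $\dot B^{s_2}_{p_2,r_2}$ continuously into $\dot B^{-2/r_0}_{\infty,r_0}$ — the image of $\dot B^{s_2}_{p_2,r_2}$ under the critical embedding of Proposition \ref{p:embed}, since $s_2-d/p_2=-2/r_0$ by \eqref{cond:HLS_0} — so that Proposition \ref{prop1.1}, valid also for $p=\infty$, gives $e^{\mu t\Delta}\mathbb P h_0$ and $e^{\nu t\Delta}\mathbb Q h_0$ in $L^{r_0}_T(L^\infty)$ with the required bound.

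Uniqueness is then standard: the difference of two solutions in the above class solves \eqref{eq:lame} with $h_0=0$ and $f=0$, and projecting with $\mathbb P$, $\mathbb Q$ and invoking uniqueness for the heat equation in that class forces it to vanish.
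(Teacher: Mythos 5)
Your proof follows the same route as the paper's: Helmholtz decomposition into $\P h$ and $\Q h$, reduction to heat equations with viscosities $\mu$ and $\nu=\lambda+\mu$, Duhamel's formula, and the estimates of Paragraph \ref{sss:max-D} for the free and forced parts, concluded by the boundedness of $\P$ and $\Q$ on $L^{p}$ and on homogeneous Besov spaces. In fact your handling of the endpoint $p_0=\infty$ (where $\P$ and $\Q$ fail to be bounded on $L^{p_0}$), via the factorization $\Q\,e^{\tau\Delta}=e^{(\tau/2)\Delta}\circ\bigl(\Q\,e^{(\tau/2)\Delta}\bigr)$ for the Duhamel term and the critical Besov embedding for the initial datum, is more careful than the paper's own argument, which simply invokes continuity of the projectors on $L^p$ for $1<p<\infty$ although the proposition is subsequently applied with $p_0=\infty$.
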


\begin{proof}
Let us write down the Helmholtz decomposition of the vector field $h$: denoting by $\P$ the Leray projector onto the space of divergence-free
vector fields and by $\Q$ the projector onto the space of irrotational vector fields, we have $h\,=\,\P h\,+\,\Q h$.
Recall that  we have in Fourier variables
$$
\mc F\bigl(\Q h\bigr)(\xi)\,=\,\frac{1}{|\xi|^2}\,\bigl(\xi\cdot \what{h}(\xi)\bigr)\,\xi\,.
$$
Hence  $\P$ and $\Q$ are  linear combinations of composition of Riesz transforms and thus act continuously on $L^p$ for all $1<p<\infty.$
\medbreak
Now, applying those two operators to System \eqref{eq:lame}, we discover that $\P h$ and $\Q h$ satisfy the heat equations
$$
\bigl(\pa_t\,-\,\mu\,\Delta\bigr)\,\P h\,=\,\P f\quad\mbox{ and }\quad \bigl(\pa_t\,-\,\nu\,\Delta\bigr)\,\Q h\,=\,\Q f\,,
\quad\hbox{with }\ \nu:=\,\mu+\lambda.
$$
with initial data $\P h_0$ and $\Q h_0,$ respectively.  Therefore, denoting by  $\A$ the operator $\P$ or $\Q,$ and by $\alpha$ either $\mu$ or $\nu$,
Duhamel's formula gives us
\begin{equation} \label{eq:Duhamel}
\A h(t)\,=\,e^{\alpha t \Delta}\,\A h_0\,+\,\int^t_0e^{\alpha (t-s)\Delta}\,\A f(s)\,ds\,.
\end{equation}
For the term containing the initial datum, we apply  \eqref{est:nabla^2-h}, \eqref{est:ch}, \eqref{est:nabla-h} and \eqref{est:h}, while for the source term we apply Lemmas \ref{l:max-reg_2}, \ref{l:max-reg_1} and \ref{l:max-reg_0}.
Therefore, we conclude by continuity of operators $\P$ and $\Q$ over $L^p$, for $1<p<\infty$, and over $\dot B^{s}_{q,r}$, for all $s\in\R$ and all $(q,r)\in]1,\infty[^2$.
\end{proof}



\subsection{Tangential regularity} \label{ss:tang}

We establish here fundamental stationary estimates about propagation of \emph{striated} (or \emph{tangential}) \emph{regularity} in 
the $L^p$ setting.

Before starting the presentation, let us recall  that  the classical result on zero order pseudo-differential operators  ensures that, if $g\in L^\infty$, then $\Delta^{-1}\pa_i\pa_j g\in BMO$.
The main result of this paragraph states  that,
if $g$ has  suitable tangential regularity properties (similar to those
exhibited by J.-Y. Chemin in \cite{Ch1991}  for the  vortex patches problem),
then    $\Delta^{-1}\pa_i\pa_j g$ is  in $L^\infty$. 
Our starting point is an adaptation of  Lemma  5.1 in \cite{PZ}
enabling us  to handle operator $\nabla(\eta\Id-\Delta)^{-1}$, valid in any dimension
 and for non-zero divergence vector fields. 

Before stating that lemma, the proof of which is postponed in Appendix, let us introduce, for  $m\in\R$, the class $S^m$  of  symbols of order $m$, 
that is  the space of $\mc C^\infty(\R^d)$ functions $\sigma$  such that 
for all $\alpha\in\N^d,$ there exists   $C_\alpha>0$ satisfying for all $\xi\in\R^d$, 
$$\left|\p^\alpha \sigma(\xi)\right|\,\leq\,C_\alpha\,(1+|\xi|)^{m-|\alpha|}.$$
\begin{lem}\label{l:symb}
Let $1<p<\infty$. Consider a vector field $X$ in $\L^{\infty,p}$ and a function $g\in L^\infty$ such that $\pa_Xg\in L^p$.
Let $\sigma$ be a smooth Fourier multiplier in the class $S^{-1}.$ Then, for any fixed $0<s<1$, the following estimate holds true: 
$$
\left\|\p_X\s(D)g\right\|_{B^{s}_{p,\infty}}\,\leq\,C\,\bigl(\|\p_Xg\|_{L^p}\,+\,\|\na X\|_{L^p}\,\|g\|_{L^\infty}\bigr)\cdotp
$$
\end{lem}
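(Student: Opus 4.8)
The plan is to commute the vector field $X$ through the Fourier multiplier $\sigma(D)$ and estimate the commutator. Writing $\pa_X\sigma(D)g = \div(\sigma(D)g\, X) - (\sigma(D)g)\div X$, I would focus on the product $\sigma(D)g\, X$ and split it using Bony's paraproduct decomposition: $\sigma(D)g\, X = T_{\sigma(D)g}X + T_X\sigma(D)g + R(\sigma(D)g, X)$. Since $\sigma\in S^{-1}$, the operator $\sigma(D)$ maps $L^\infty$ into $B^1_{\infty,\infty}$ (or at least into the Zygmund-type class), so $\sigma(D)g$ has one extra derivative of regularity over $g$; this is the gain that must be exploited. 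For the paraproduct $T_X\sigma(D)g$ and the remainder $R$, standard paraproduct estimates give control in $B^s_{p,\infty}$ in terms of $\|\nabla X\|_{L^p}\|g\|_{L^\infty}$, using $X\in L^\infty$, $\nabla X\in L^p$, and $\sigma(D)g\in B^1_{\infty,\infty}\cap L^\infty$; after taking divergence this lands in $B^{s}_{p,\infty}$ for $0<s<1$.

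The delicate term is $T_{\sigma(D)g}X$, where the low-frequency factor carries the roughness. Here I would introduce the commutator with the paraproduct: the point is that $\div T_{\sigma(D)g}X$ should be compared with $T_{\sigma(D)g}\div X$ plus $T_{\nabla\sigma(D)g}\cdot X$ — and more importantly, one wants to recognize $\div(\sigma(D)g\,X)$, modulo acceptable errors, as $\sigma(D)(\div(gX)) = \sigma(D)(\pa_Xg + g\div X)$. So the real work is estimating the commutator $[\sigma(D),\, X\cdot\nabla]g$ and the term $\sigma(D)(g\div X) - (\sigma(D)g)\div X$. Since $\sigma\in S^{-1}$, the commutator $[\sigma(D), X\cdot\nabla]$ is morally an operator of order $0$ acting with a gain coming from $\nabla X$; a Coifman–Meyer / Calderón-type commutator estimate, done frequency-block by frequency-block à la Chemin, should yield a bound by $C\|\nabla X\|_{L^p}\|g\|_{L^\infty}$ in $B^s_{p,\infty}$. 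The two genuinely $L^p$-controlled quantities $\pa_Xg$ and $\div X$, once hit by $\sigma(D)$ (order $-1$, hence bounded on $L^p$ and improving to $B^s_{p,\infty}$ for $s<1$ by Lemma \ref{l:D-1}-type reasoning), contribute the $\|\pa_Xg\|_{L^p}$ part of the estimate.

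I expect the main obstacle to be the careful treatment of the low-high paraproduct term $T_{\sigma(D)g}X$ and the associated commutator $[\sigma(D), X\cdot\nabla]g$: one must verify that the full derivative landing on $X$ is always compensated either by the order $-1$ smoothing of $\sigma$ or by pairing with the $L^p$ quantity $\pa_Xg$, and that no term requires more than $\nabla X\in L^p$ together with $g\in L^\infty$. This is exactly the mechanism in Chemin's vortex patch argument and in Lemma 5.1 of \cite{PZ}, but here it must be carried out for a general symbol in $S^{-1}$, in arbitrary dimension, and allowing $\div X\neq0$; the divergence terms are new and will require the identity \eqref{eq:d_Yf} to be used repeatedly to keep everything in divergence form. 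A dyadic decomposition, with telescoping of the low-frequency cutoffs $S_{j-1}$, together with Bernstein's inequalities to convert derivatives into powers of $2^j$, should make all the sums converge for $0<s<1$, the restriction $s<1$ being dictated precisely by the $B^1_{\infty,\infty}$ regularity ceiling of $\sigma(D)g$ when $g$ is merely bounded.
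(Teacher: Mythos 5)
Your plan follows essentially the same route as the paper's proof: reduce the main contribution to $\sigma(D)\div(gX)=\sigma(D)(\pa_Xg+g\div X)$ (which yields the $\|\pa_Xg\|_{L^p}$ and $\|\nabla X\|_{L^p}\|g\|_{L^\infty}$ terms via the mapping $S^{-1}\colon B^0_{p,\infty}\to B^1_{p,\infty}$), control the paraproduct commutator $[T_{X^k},\sigma(D)\pa_k]g$ by a blockwise kernel/mean-value argument, and bound the reversed paraproduct $T_{\sigma(D)\pa_kg}X^k$, which, exactly as you diagnose, is the sole term forcing $s<1$. The only step left implicit in your sketch is the separate treatment of the low frequencies of $X$ in the remainder terms (the paper splits $X=S_0X+(\Id-S_0)X$, since $\nabla X\in L^p$ controls only the high-frequency part of $X$ in $B^1_{p,\infty}$), a routine but necessary precaution.
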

Since the nonhomogeneous Besov space $B^s_{p,\infty}$ is embedded in $L^\infty$ whenever  $s>d/p,$ 
that   lemma implies the following fundamental result.
\begin{col}\label{c:symb}
Assume that  $d<p<\infty,$ and  consider a vector field $X$ in $\L^{\infty,p}$ and a function $L^\infty$ such that $\pa_Xg\in L^p$.
Then, for any   Fourier multiplier $\sigma$ in the class $S^{-1},$
there exists a constant $C>0$ such that 
$$\left\|\p_X\s(D)g\right\|_{L^\infty}\,\leq\,C\,\bigl(\left\|\p_Xg\right\|_{L^p}\,+\,\|\na X\|_{L^p}\,\|g\|_{L^\infty}\bigr)\cdotp$$
\end{col}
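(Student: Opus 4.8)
The plan is to deduce Corollary~\ref{c:symb} directly from Lemma~\ref{l:symb} by choosing an appropriate index $s$ and invoking the Besov embedding $B^s_{p,\infty}\hookrightarrow L^\infty$. The only point to check is that, under the hypothesis $d<p<\infty$, there exists an admissible $s$ in the range $(0,1)$ required by the lemma which also satisfies the embedding threshold $s>d/p$.

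First I would observe that since $p>d$ we have $d/p<1$, so the open interval $(d/p,1)$ is nonempty; fix any $s$ in this interval, for instance $s:=(1+d/p)/2$, which clearly lies in $(0,1)$ and satisfies $s>d/p$. For this choice, Lemma~\ref{l:symb} applies to the vector field $X\in\L^{\infty,p}$, the function $g\in L^\infty$ with $\pa_Xg\in L^p$, and the symbol $\sigma\in S^{-1}$, yielding
$$\left\|\p_X\s(D)g\right\|_{B^{s}_{p,\infty}}\,\leq\,C\,\bigl(\|\p_Xg\|_{L^p}\,+\,\|\na X\|_{L^p}\,\|g\|_{L^\infty}\bigr)\cdotp$$
Next I would recall the standard Besov embedding: whenever $s>d/p$, the nonhomogeneous space $B^s_{p,\infty}(\R^d)$ embeds continuously into $L^\infty(\R^d)$ (this follows from Proposition~\ref{p:embed} with $p_2=\infty$, $r_1=\infty=r_2$, since then the strict inequality $0<s-d/p$ is exactly our condition, and $B^0_{\infty,\infty}\hookrightarrow L^\infty$). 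Composing the two inequalities gives
$$\left\|\p_X\s(D)g\right\|_{L^\infty}\,\leq\,C\,\left\|\p_X\s(D)g\right\|_{B^{s}_{p,\infty}}\,\leq\,C\,\bigl(\left\|\p_Xg\right\|_{L^p}\,+\,\|\na X\|_{L^p}\,\|g\|_{L^\infty}\bigr)\cdotp$$

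There is essentially no obstacle here: the statement is a routine corollary, and the only mild subtlety is the bookkeeping needed to confirm that the constraint $s\in(0,1)$ imposed by Lemma~\ref{l:symb} is compatible with the constraint $s>d/p$ demanded by the embedding — which is guaranteed precisely by the standing assumption $p>d$. I would also note in passing that the constant $C$ in the final estimate depends on $d$, $p$, the chosen $s$, and finitely many seminorms of the symbol $\sigma$ in the class $S^{-1}$, but not otherwise on $X$ or $g$; this is inherited directly from the corresponding dependence in Lemma~\ref{l:symb}. No further input is required, so the proof is immediate once the index $s$ is fixed.
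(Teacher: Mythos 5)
Your proof is correct and is essentially identical to the paper's: Lemma~\ref{l:symb} gives the bound in $B^s_{p,\infty}$ for any $s\in(0,1)$, and since $p>d$ one may pick $s\in(d/p,1)$ and conclude by the embedding $B^s_{p,\infty}\hookrightarrow L^\infty$. One tiny slip in your parenthetical justification: $B^0_{\infty,\infty}$ does \emph{not} embed into $L^\infty$; the correct chain is $B^s_{p,\infty}\hookrightarrow B^{s-d/p}_{\infty,\infty}\hookrightarrow B^0_{\infty,1}\hookrightarrow L^\infty$, which uses the strict positivity of $s-d/p$.
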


From the previous results, we immediately obtain the following fundamental stationary estimate.

\begin{prop} \label{p:tang}
 Fix $p\in\,]d,\infty[\,$ and an integer $m\geq d-1$, and take a non-degenerate family $\mc X=\left(X_\lambda\right)_{1\leq\lambda\leq m}$ of
vector-fields belonging to $\L^{\infty,p}$. Let $g\in L^\infty(\R^d)$ be such that $g\,\in\,\L^{p}_{\mc X}$.

Then for all $\eta>0$, one has the property $(\eta\Id-\Delta)^{-1}\nabla^2 g\,\in\,L^\infty(\R^d)$. Moreover, there exists a positive constant $C$ such that the following estimates hold true:
$$
\left\|\nabla^2(\eta\Id-\Delta)^{-1}g\right\|_{L^\infty}\,\leq\,C\Biggl(
\biggl(1+\frac{\tn{\mc X}_{L^\infty}^{4d-5}\,\tn{\nabla\mc X}_{L^p}}{\bigl(I(\mc X)\bigr)^{4d-4}}\biggr)\,\|g\|_{L^\infty}\,
+\,\frac{\tn{\mc X}_{L^\infty}^{4d-5}}{\bigl(I(\mc X)\bigr)^{4d-4}}\,\tn{\pa_{\mc X}g}_{L^p}\Biggr)\cdotp
$$
\end{prop}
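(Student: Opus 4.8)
The plan is to reduce the bound for $\nabla^2(\eta\Id-\Delta)^{-1}g$ in $L^\infty$ to an estimate of the form covered by Corollary \ref{c:symb}, applied to cleverly chosen vector fields and symbols. The starting point is that for a second-order Riesz-type multiplier $R_{ij}:=\pa_i\pa_j(\eta\Id-\Delta)^{-1}$, the operator $R_{ij}$ is of order $0$, so $R_{ij}g$ is merely in $BMO$ for $g\in L^\infty$; the tangential regularity of $g$ is exactly what upgrades this to $L^\infty$. To exploit it, I would write $R_{ij}g$ as a sum of a ``good'' term which is already $L^\infty$ by elementary means, plus a term of the shape $\pa_{X_\lambda}\sigma(D)g$ for a suitable family index $\lambda$ and $\sigma\in S^{-1}$, which Corollary \ref{c:symb} controls. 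The non-degeneracy of $\mc X$ is what makes such a decomposition possible pointwise: at each $x$ one can select a subfamily $X_\Lambda(x)$ whose wedge has size comparable to $I(\mc X)$, so that together with one extra (transverse) direction the $X_\lambda$'s span $\R^d$; inverting the corresponding (bounded, with bounded-below determinant) matrix lets one express each coordinate derivative $\pa_i$ as a combination, with $\L^{\infty,p}$ coefficients, of the $\pa_{X_\lambda}$'s plus a ``transverse'' derivative.

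Concretely, the first step is the pointwise linear-algebra lemma: given the non-degeneracy condition, for each $x$ there is $\Lambda=\Lambda(x)\in\Lambda^m_{d-1}$ and a unit vector $e(x)$ transverse to $X_{\lambda_1}(x),\dots,X_{\lambda_{d-1}}(x)$ with the determinant of $(X_{\lambda_1},\dots,X_{\lambda_{d-1}},e)$ bounded below by $c\,I(\mc X)^{d-1}$; Cramer's rule then gives $\nabla = \sum_{i=1}^{d-1} a_i(x)\,\pa_{X_{\lambda_i}} + b(x)\,\pa_e$ with coefficients bounded by $\tn{\mc X}_{L^\infty}^{d-2}/I(\mc X)^{d-1}$ (this accounts for the powers of $\tn{\mc X}_{L^\infty}$ and $I(\mc X)$ in the final estimate; iterating/squaring the construction inside a second-order operator is what produces the exponents $4d-5$ and $4d-4$). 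The second step is to feed this into $R_{ij}g = \pa_i(\eta\Id-\Delta)^{-1}\pa_j g$: one of the two derivatives is converted into tangential derivatives $\pa_{X_\lambda}$, yielding terms $\pa_{X_\lambda}\sigma(D)g$ with $\sigma(\xi)=\xi_j/(\eta+|\xi|^2)\in S^{-1}$, to which Corollary \ref{c:symb} applies and gives $L^\infty$ control by $\|\pa_{X_\lambda}g\|_{L^p}+\|\nabla X_\lambda\|_{L^p}\|g\|_{L^\infty}$; the leftover ``transverse'' piece $\pa_e(\eta\Id-\Delta)^{-1}\pa_j g$ must be handled separately, but the operator there still has a smoothing symbol and one can move the $\pa_e$ onto the (smooth, $\L^{\infty,p}$) coefficients using the divergence-form identity \eqref{eq:d_Yf}, so that only $\|g\|_{L^\infty}$ and $\|\nabla\mc X\|_{L^p}$ enter. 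Summing over the (finitely many, index-dependent) choices of $\Lambda$, and using that the coefficients $a_i,b$ and their relevant products lie in $\L^{\infty,p}$ with the stated norms, one collects all contributions into the claimed bound.

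The main obstacle I anticipate is \emph{bookkeeping the non-degeneracy quantitatively}: the selection $\Lambda(x)$ is only measurable, not continuous, so one cannot literally differentiate the coefficients $a_i(x)$, $b(x)$, $e(x)$. The standard remedy — and the genuinely delicate point — is to avoid ever differentiating the selected coefficients: one should localize in frequency (Littlewood-Paley), apply the decomposition block-by-block where the extra derivative falls on $g$ (or its regularization) rather than on the coefficients, and sum the blocks using the $B^s_{p,\infty}\hookrightarrow L^\infty$ embedding for $s>d/p$ exactly as in Corollary \ref{c:symb}. Equivalently, one can bypass the pointwise selection altogether by using the algebraic identity that $\sum_{\Lambda} \bigl(\stackrel{d-1}{\wedge}X_\Lambda\bigr)\otimes(\cdots)$ reconstructs the identity matrix up to a scalar controlled by $I(\mc X)$, so the coefficients become \emph{globally defined} polynomial expressions in the components of the $X_\lambda$'s divided by $I(\mc X)^{4d-4}$, hence honestly in $\L^{\infty,p}$ with the advertised norms. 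Once this global, differentiable form of the decomposition is in place, the rest is a routine application of Corollary \ref{c:symb} together with the product/commutator estimates for $S^{-1}$ multipliers and $\L^{\infty,p}$ coefficients, and Lemma \ref{l:D-1}/Corollary \ref{c:Delta-Id} to handle the lowest-order remainders.
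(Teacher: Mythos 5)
Your high-level strategy is the paper's: decompose the second-order symbol $\xi_i\xi_j$ using the non-degenerate family, reduce the tangential pieces to Corollary \ref{c:symb}, and sum over the finitely many $\Lambda\in\Lambda^m_{d-1}$ on the sets where $\bigl|\stackrel{d-1}{\wedge}X_\Lambda\bigr|$ is large. You also correctly anticipate that one should not literally differentiate a pointwise selection $\Lambda(x)$, and that the remedy is a global algebraic identity in terms of the $X_\lambda$'s.

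However, the plan as you state it has a genuine gap in the step where you convert only \emph{one} of the two derivatives and leave a transverse piece $\pa_e(\eta\Id-\Delta)^{-1}\pa_j g$. Your claim that ``the operator there still has a smoothing symbol'' is not correct: the composed symbol behaves like $\xi_e\,\xi_j/|\xi|^2$ for large $|\xi|$, which is of order $0$, not negative order. Consequently, with $g\in L^\infty$ and no tangential regularity in the $e$-direction (indeed $e$ is chosen \emph{transverse} to the $X_\lambda$'s, so $\pa_e$ is exactly the direction in which the hypotheses give nothing), this term lands only in $BMO$. Using \eqref{eq:d_Yf} to write $\pa_Y h=\div(hY)-h\div Y$ does not rescue it: the second term is at best $L^p$ (since $\div Y\in L^p$ and $h\in L^\infty$), and $\div(hY)$ is a distributional derivative of a bounded function, not a bounded function. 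Neither gives the $L^\infty$ bound needed.

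The paper's proof avoids this by decomposing the full quadratic symbol $\xi_i\xi_j$ at once, via Lemma 3.2 of \cite{D1999}. On $U_\Lambda$ one has the identity
$$
\xi_i\,\xi_j\,=\,\frac{\bigl(\stackrel{d-1}{\wedge}X_\Lambda(x)\bigr)^i\,\bigl(\stackrel{d-1}{\wedge}X_\Lambda(x)\bigr)^j}{\bigl|\stackrel{d-1}{\wedge}X_\Lambda(x)\bigr|^2}\,|\xi|^2\,+\,\frac{1}{\bigl|\stackrel{d-1}{\wedge}X_\Lambda(x)\bigr|^4}\sum_{k,\ell}b^{k\ell}_{ij}\,\xi_k\,\bigl(X_{\lambda_\ell}(x)\cdot\xi\bigr)\,,
$$
so that the transverse--transverse contribution is organized as a multiple of $|\xi|^2$, i.e.\ it gives $\Delta(\eta\Id-\Delta)^{-1}g$, which is bounded on $L^\infty$ by Lemma \ref{l:D-1} (the Bessel operator kills the singularity at $\xi=0$). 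Everything else in the identity carries a factor $X_{\lambda_\ell}\cdot\xi$ and hence a tangential derivative, to which Corollary \ref{c:symb} applies with $\sigma(\xi)=\xi_k/(\eta+|\xi|^2)\in S^{-1}$. This is the step your single-derivative conversion misses: the transverse direction only becomes harmless when it appears squared, so that it can be traded for $-\Delta$ up to tangential remainders. If you want to salvage your version, you must convert \emph{both} $\pa_i$ and $\pa_j$, and then explicitly identify the fully transverse term with $\Delta$ modulo tangential corrections; this is precisely what the quoted identity does, in a form whose coefficients ($b^{k\ell}_{ij}$, homogeneous of degree $4d-5$, divided by $\bigl|\stackrel{d-1}{\wedge}X_\Lambda\bigr|^4$) are globally defined in terms of the $X_\lambda$'s and account for the exponents in the final bound.
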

\begin{proof} Fix some $\Lambda:=(\lambda_1,\cdots,\lambda_{d-1})\in \Lambda_{d-1}^m$, and consider
the set $U_\Lambda$ of those $x\in\R^d$ such that one has
$\Bigl(\stackrel{d-1}{\wedge}X_\Lambda\Bigr)(x)\geq \bigl(I(\mc X)\bigr)^{d-1}$.
By Lemma 3.2 of \cite{D1999}, there exists a family of functions $b_{ij}^{k\ell}$, where
$(i,j,k)\in\{1,\cdots,d\}^{3}$ and $\ell\in\{1\ldots d-1\}$,
which are homogeneous of degree $4d-5$ with respect to the coefficients of 
$\bigl(X_{\lambda_1}\ldots X_{\lambda_{d-1}}\bigr)$ and such that the following
identity holds true on $U_\Lambda$, for all $\xi\in \R^d$:
$$
\xi_i\,\xi_j\,=\,\frac{ \Bigl(\stackrel{d-1}{\wedge}X_\Lambda(x)\Bigr)^i\;\Bigl(\stackrel{d-1}{\wedge}X_\Lambda(x)\Bigr)^j}
{\Bigl|\stackrel{d-1}{\wedge}X_\Lambda(x)\Bigr|^2}\,|\xi|^2\,+\,\frac1{\Bigl|\stackrel{d-1}{\wedge}X_\Lambda(x)\Bigr|^4}\sum_{k,\ell}b_{ij}^{k\ell}\,\xi_k\,\left(X_{\lambda_\ell}(x)\cdot\xi\right)\,.
$$
Then, we multiply both sides by $(\eta+|\xi|^2)^{-1}\,\wh g(\xi)$ and take the inverse Fourier transform at $x$: 
$$\displaylines{\quad
(\eta\Id-\Delta)^{-1}\pa_i\pa_jg\,=\,\frac{ \Bigl(\stackrel{d-1}{\wedge}X_\Lambda\Bigr)^i\,\Bigl(\stackrel{d-1}{\wedge}X_\Lambda\Bigr)^j}{\Bigl|\stackrel{d-1}{\wedge}X_\Lambda\Bigr|^2}\,
\Delta(\eta\Id-\Delta)^{-1}g\hfill\cr\hfill\,+\,\frac1{\Bigl|\stackrel{d-1}{\wedge}X_\Lambda\Bigr|^4}\sum_{k,\ell} b_{ij}^{k\ell}\,\pa_{X_{\lambda_\ell}}\left(\pa_k(\eta\Id-\Delta)^{-1}g\right)\cdotp\quad}
$$
Hence, thanks also to Lemma \ref{l:D-1}, for all $\Lambda\in\Lambda^{d-1}$, we deduce the following bound on the set $U_\Lambda$:
\begin{multline} \label{est:tang-reg_U}
\left\|(\eta\Id-\Delta)^{-1}\pa_i\pa_j\,g\right\|_{L^\infty(U_\Lambda)}\leq\|g\|_{L^\infty}\\+\frac{C}{\bigl(I(\mc X)\bigr)^{4d-4}}\sum_{\ell}
\|X_\Lambda\|_{L^\infty}^{4d-5}\,\left\|\pa_{X_{\lambda_\ell}}\left(\nabla(\eta\Id-\Delta)^{-1}g\right)\right\|_{L^\infty}\,.
\end{multline}

In order to bound the last term in the right-hand side,  we apply Corollary \ref{c:symb}:
we get, for some constant $C>0$ also depending on $d$ and on $p$, the estimate
$$
\left\|\pa_{X_{\lambda_\ell}}\left(\nabla(\eta\Id-\Delta)^{-1}g\right)\right\|_{L^\infty}\,\leq\,C\,\left(\left\|\pa_{X_{\lambda_\ell}}g\right\|_{L^p}\,+\,
\|g\|_{L^\infty}\,\left\|\nabla X_{\lambda_\ell}\right\|_{L^p}\right)\cdotp
$$
Inserting this  bound into \eqref{est:tang-reg_U} immediately gives us the result.
\end{proof}

We conclude this part by presenting a new estimate concerning tangential regularity. This statement 
that will be proved in the appendix, turns out to be  of tremendous importance  to exhibit the  Lipschitz regularity
of the velocity field $u$, see Subsection \ref{ss:tangential} below.
\begin{prop} \label{p:tang-d_X}
Let the hypotheses of Proposition \ref{p:tang} be in force. 
Then   there exists a  constant $C>0$ such that, for all $\eta>0$, the following estimate holds true:
\begin{align*}
&\tn{\pa_{\mc X}\nabla^2(\eta\Id-\Delta)^{-1}g}_{L^p}\,\leq\,C\,\left(\tn{\nabla\mc X}_{L^p}
\biggl(1+\frac{\tn{\mc X}_{L^\infty}^{4d-5}\,\tn{\nabla\mc X}_{L^p}}{\bigl(I(\mc X)\bigr)^{4d-4}}\biggr)\,\|g\|_{L^\infty}\right. \\
&\qquad\qquad\qquad\qquad\left.+\,\biggl(1+\frac{\tn{\mc X}_{L^\infty}^{4d-5}\,\tn{\nabla\mc X}_{L^p}}{\bigl(I(\mc X)\bigr)^{4d-4}}\biggr)\,\tn{\pa_{\mc X}g}_{L^p}\,+\,
\frac{\tn{\mc X}_{L^\infty}^{4d-4}}{\bigl(I(\mc X)\bigr)^{4d-4}}\,\tn{\nabla\mc X}_{L^p}\,\|g\|_{L^\infty}\right)\,.
\end{align*}
\end{prop}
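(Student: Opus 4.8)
The plan is to follow the same strategy as in the proof of Proposition \ref{p:tang}, but now keeping track of one more directional derivative. Fix $\Lambda=(\lambda_1,\ldots,\lambda_{d-1})\in\Lambda^m_{d-1}$ and work on the set $U_\Lambda$ where $|\wedge^{d-1}X_\Lambda|\geq(I(\mc X))^{d-1}$. Starting from the pointwise symbol identity of Lemma 3.2 of \cite{D1999}, we already have on $U_\Lambda$ the representation
\begin{equation*}
(\eta\Id-\Delta)^{-1}\pa_i\pa_jg\,=\,a_{ij}\,\Delta(\eta\Id-\Delta)^{-1}g\,+\,\sum_{k,\ell}c^{k\ell}_{ij}\,\pa_{X_{\lambda_\ell}}\bigl(\pa_k(\eta\Id-\Delta)^{-1}g\bigr),
\end{equation*}
where $a_{ij}:=(\wedge^{d-1}X_\Lambda)^i(\wedge^{d-1}X_\Lambda)^j/|\wedge^{d-1}X_\Lambda|^2$ and $c^{k\ell}_{ij}:=b^{k\ell}_{ij}/|\wedge^{d-1}X_\Lambda|^4$, both of which are bounded rational functions of the components of $\mc X$ with denominators controlled by $I(\mc X)$. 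I would then apply $\pa_{X_{\lambda'}}$ for an arbitrary index $\lambda'$ to this identity, using the Leibniz rule in the form \eqref{eq:d_Yf}: since the coefficients $a_{ij}$, $c^{k\ell}_{ij}$ are smooth functions of $\mc X$, their derivatives along $X_{\lambda'}$ produce factors of $\nabla X$ tested against $X_{\lambda'}\in L^\infty$, hence are controlled in $L^p$ by $\tn{\nabla\mc X}_{L^p}$ times the appropriate power of $\tn{\mc X}_{L^\infty}/(I(\mc X))$.

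The terms then fall into three groups. First, $\pa_{X_{\lambda'}}$ hitting a coefficient times $\Delta(\eta\Id-\Delta)^{-1}g$ or times $\pa_{X_{\lambda_\ell}}\nabla(\eta\Id-\Delta)^{-1}g$: here I would bound $\Delta(\eta\Id-\Delta)^{-1}g$ in $L^\infty$ using Lemma \ref{l:D-1} (with constant uniform in $\eta$ after rescaling), and $\pa_{X_{\lambda_\ell}}\nabla(\eta\Id-\Delta)^{-1}g$ in $L^\infty$ via Corollary \ref{c:symb}, which is exactly what produces the terms $\|g\|_{L^\infty}$ and $\tn{\pa_{\mc X}g}_{L^p}$ with the stated powers of $\tn{\mc X}_{L^\infty}/(I(\mc X))$; multiplying by the $L^p$ norm of the differentiated coefficient gives the first and the last terms in the claimed estimate. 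Second, the coefficient $a_{ij}$ times $\pa_{X_{\lambda'}}\Delta(\eta\Id-\Delta)^{-1}g=\pa_{X_{\lambda'}}\bigl((\eta\Id-\Delta)^{-1}-\eta^{-1}\Id\bigr)\cdots$ — more cleanly, $\pa_{X_{\lambda'}}$ of $\Delta(\eta\Id-\Delta)^{-1}g$ is $\pa_{X_{\lambda'}}$ applied to $\sigma(D)g$ with $\sigma\in S^0$; here I would instead absorb $\Delta(\eta\Id-\Delta)^{-1}=-\,\mathrm{div}\,\nabla(\eta\Id-\Delta)^{-1}$ and reduce to Corollary \ref{c:symb} applied to each $\pa_k(\eta\Id-\Delta)^{-1}$, a symbol in $S^{-1}$, giving again $\|g\|_{L^\infty}$ and $\tn{\pa_{\mc X}g}_{L^p}$ contributions. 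Third, and most delicate, the coefficient times $\pa_{X_{\lambda'}}\pa_{X_{\lambda_\ell}}\pa_k(\eta\Id-\Delta)^{-1}g$: commuting the two directional derivatives, $\pa_{X_{\lambda'}}\pa_{X_{\lambda_\ell}}=\pa_{X_{\lambda_\ell}}\pa_{X_{\lambda'}}+[\pa_{X_{\lambda'}},\pa_{X_{\lambda_\ell}}]$, where the commutator is a first-order operator with $L^p$ coefficients $X_{\lambda'}\cdot\nabla X_{\lambda_\ell}-X_{\lambda_\ell}\cdot\nabla X_{\lambda'}$; the non-commutator part is handled by a second application of Corollary \ref{c:symb} (with $\sigma=\pa_k(\eta\Id-\Delta)^{-1}\in S^{-1}$), producing $\tn{\pa_{\mc X}g}_{L^p}$ and $\|g\|_{L^\infty}\tn{\nabla\mc X}_{L^p}$, while the commutator part is estimated directly since $\nabla^2(\eta\Id-\Delta)^{-1}g$ is only in $L^\infty$ (not $L^p$) — this forces me to be careful and instead keep one derivative inside: write $\pa_k(\eta\Id-\Delta)^{-1}g$ and apply Corollary \ref{c:symb} to the composite symbol, so that only tangential derivatives ever land on $g$ in an $L^p$ norm.

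The main obstacle, as in such arguments, is precisely this last commutator/bookkeeping step: one must arrange all manipulations so that $g$ is only ever differentiated along the vector fields of $\mc X$ (never by a plain $\nabla$ producing a non-integrable object), which means systematically using the identity \eqref{eq:d_Yf} to move $\mathrm{div}$'s onto the smooth symbols and invoking Lemma \ref{l:symb}/Corollary \ref{c:symb} at the level of $S^{-1}$ symbols rather than $S^0$ ones. Once the representation on each $U_\Lambda$ is written with all coefficients expressed as homogeneous rational functions of the $X_{\lambda}$'s of the degrees recorded in Proposition \ref{p:tang} (degree $4d-5$) and their $X_{\lambda'}$-derivatives (degree $4d-4$ after one more differentiation), the stated estimate follows by taking $L^p$ norms, using that $\bigcup_\Lambda U_\Lambda=\R^d$ by non-degeneracy, and collecting the three types of contributions. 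The remaining work is the routine verification of the homogeneity degrees and of the fact that $\pa_k(\eta\Id-\Delta)^{-1}$ and $\Delta(\eta\Id-\Delta)^{-1}$ have $S^{-1}$, resp. $S^0$, symbol bounds uniform in $\eta>0$ after the obvious rescaling $\eta^{1/2}\,x\mapsto x$.
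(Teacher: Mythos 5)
Your overall plan---differentiate the pointwise representation of $\nabla^2(\eta\Id-\Delta)^{-1}g$ on each $U_\Lambda$ along $X_{\lambda'}$ and estimate term by term---breaks down precisely at the term you yourself flag as the most delicate one, and the fix you propose does not work. After Leibniz, you are left with $c^{k\ell}_{ij}\,\pa_{X_{\lambda'}}\pa_{X_{\lambda_\ell}}\bigl(\pa_k(\eta\Id-\Delta)^{-1}g\bigr)$, i.e.\ a \emph{second-order} striated derivative of $\sigma(D)g$ with $\sigma\in S^{-1}$. Corollary \ref{c:symb} cannot be ``applied a second time'': it yields an $L^\infty$ bound on $\pa_{X_{\lambda_\ell}}\sigma(D)g$ (and Lemma \ref{l:symb} a $B^{s}_{p,\infty}$ bound with $s<1$), but gives no structural information allowing one to differentiate that object once more along $X_{\lambda'}$ and land in $L^p$; for instance the paraproduct term $T_{\sigma(D)\pa_kg}X^k$ in \eqref{S9eq2} only lies in $B^{s}_{p,\infty}$ with $s<1$, so its gradient has negative regularity and its product with $X_{\lambda'}$ is not in $L^p$. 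Controlling two iterated tangential derivatives of $\sigma(D)g$ would require second-order striated regularity of $g$, which is neither assumed nor present in the right-hand side of the statement. Commuting $\pa_{X_{\lambda'}}$ and $\pa_{X_{\lambda_\ell}}$ does not help, since both orderings contain the same iterated derivative (only the commutator itself is harmless, being an $L^p$ coefficient against $\nabla^2(\eta\Id-\Delta)^{-1}g\in L^\infty$). A similar, milder issue affects $a_{ij}\,\pa_{X_{\lambda'}}\Delta(\eta\Id-\Delta)^{-1}g$: this is $\pa_X$ of an $S^0$ multiplier applied to $g$, measured in $L^p$, which Lemma \ref{l:symb} does not cover either.

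The paper's proof avoids iterated tangential derivatives altogether. It treats the full zero-order operator $\mc R^{(\eta)}_i\mc R^{(\eta)}_j=\pa_i\pa_j(\eta\Id-\Delta)^{-1}$ at once and proves the $L^p$ commutator estimate of Lemma \ref{l:riesz},
$$\left\|\pa_X\mc R^{(\eta)}_i\mc R^{(\eta)}_jg\right\|_{L^p}\,\leq\,C\,\Bigl(\bigl(\|\mc R^{(\eta)}_i\mc R^{(\eta)}_jg\|_{L^\infty}+\|g\|_{L^\infty}\bigr)\|\nabla X\|_{L^p}+\|\pa_Xg\|_{L^p}\Bigr)\,,$$
whose proof rests on Bony's decomposition, maximal-function bounds and the Littlewood--Paley characterization of $L^p$; the crucial extra hypothesis $\mc R^{(\eta)}_i\mc R^{(\eta)}_jg\in L^\infty$ is exactly what Proposition \ref{p:tang} supplies. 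Inserting the $L^\infty$ bound of Proposition \ref{p:tang} into this inequality gives the stated estimate, with only one tangential derivative of $g$ ever appearing. To salvage your approach you would have to replace the ``second application of Corollary \ref{c:symb}'' by a genuine $L^p$ commutator estimate for a zero-order operator---at which point you have essentially reproduced Lemma \ref{l:riesz}.
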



\section{An existence statement for almost critical data and only bounded density} \label{s:max-reg}

The goal of the present section is to prove  Theorem \ref{thm:existence}. After reformulating the original system \eqref{NS}, we establish \textsl{a priori} estimates on smooth solutions, then provide 
the reader with the construction of a family of approximate smooth solutions to our (new) system. 
As a last step, we   show the  convergence of the sequence to a true solution.

\subsection{Reformulation of the system}
In all that follows,  we assume for notational simplicity that $\nu=1.$ This is of course not restrictive, owing  to 
the change of unknowns
\begin{equation}\label{eq:change}
(\wt\rho,\wt u)(t,x)\,:=\,(\rho,u)(\nu t,\nu x)\,.
\end{equation}

First of all, we want to reformulate our system in terms of new unknowns, to which maximal regularity results of Subsection \ref{ss:max} apply.
As already explained at the beginning of the paper, in order to  handle the pressure term, 
it is convenient to introduce the auxiliary vector-field 
\begin{equation} \label{d:vbis}
 v\,:=\,-\nabla(\Id-\Delta)^{-1}P\,,
\end{equation}
and the  following  \emph{modified velocity field}:
\begin{equation} \label{d:w}
 w\,:=\,u\,-\,v\,=\,u\,+\nabla(\Id-\Delta)^{-1}P\,.
\end{equation}
For future uses, we  observe that 
\begin{equation} \label{eq:div-w}
\div u\,=\,\div w\,-\Delta(\Id-\Delta)^{-1}P\,.
\end{equation}


We want to reformulate  System \eqref{NS} in terms of the new unknowns $(\vrho,w)$, keeping in mind that  estimates for $v$  may be deduced from  those  for $\vrho$,
and that combining  with information on $w$ enables us to bound  the original velocity field $u$.
From  the first  equation of \eqref{NSbis}   and  relation \eqref{eq:div-w}, we immediately deduce that 
\begin{equation} \label{eq:vrho}
 \pa_t\vrho\,+\,u\cdot\nabla\vrho \,=\, -\,\rho\,\div w\,+\,\rho\Delta(\Id-\Delta)^{-1}P\,,
\end{equation}
with $\rho\,:=1+\vrho$ and $u\,:=\,w\,-\,\nabla(\Id-\Delta)^{-1}P.$ 
\medbreak
Regarding $w,$ we see from  the second equation of \eqref{NSbis} and  \eqref{d:w} that
\beqo
\rho\,\pa_t w\,-\,\mu\,\Delta w\,-\,\la\,\na \dive w\,=\,-\,(\Id-\Delta)^{-1}\nabla P-\rho u\cdot \na u\,-\,\rho(\Id-\D)^{-1}\na\p_tP.
\eeqo
Using once again the mass equation in \eqref{NS}, we find
$$
\pa_tP\,+\,\div\bigl(P\,u\bigr)\,=\,g(\rho)\div u\,\with g(\rho):=P(\rho)-\rho P'(\rho),
$$
so that the equation for $w$ can be recast in
\begin{equation} \label{eq:w}
\rho\pa_tw\,+\,\mc Lw\,=\,-\,\rho F\,,
\end{equation}
with
\begin{multline}
F\,=\,\frac1\rho(\Id-\Delta)^{-1}\nabla P\,+\,w\cdot\na w\,+\,w\cdot \na v\, +\,v\cdot\na w\,+\,v\cdot\na v \label{d:F} \\
+\,(\Id-\Delta)^{-1}\nabla\Bigl(g(\rho)\,\dive u\,-\,\dive\bigl(P\,u\bigr)\Bigr)\cdotp
\end{multline}

The existence part of Theorem \ref{thm:existence} will be a consequence of  the following statement. 
\begin{prop} \label{p:exist_w-r}
Let $\vrho_0\,\in\,L^{p}\cap L^\infty(\R^d)$ and $w_0\,\in\,\dot B^{2-2/r}_{p,r}$, with
\begin{equation}\label{eq:pr}
d\,<\,p\,<\,\infty\qquad\hbox{ and }\qquad 1\,<\,r\,<\,\frac{2p}{2p-d}\cdotp
\end{equation}
Define $r_0$ and $r_1$ by 
\begin{equation}\label{eq:r0r1}\frac1{r_0}\,:=\,\frac1r\,+\,\frac d{2p}-1\andf \frac1{r_1}\,:=\,\frac1r\,-\,\frac12\cdotp\end{equation}
There exists  some small enough constant $\veps>0$ depending only on $d$, $p$ and $r$, 
such that, if in addition $\vrho_0$ fulfills \eqref{hyp:small-vrho},
then there exist a time $T>0$ and a weak solution $(\vrho,w)$ to system \eqref{eq:vrho}-\eqref{eq:w} on $[0,T[\,\times\R^d$ such that $\vrho\in\cC([0,T];L^{p})$ and
\begin{equation}\label{est:small-vrho} \|\vrho\|_{L^\infty_T(L^\infty)}\,\leq\,4\,\veps\,,\end{equation}
and $w\,\in\,\cC\bigl([0,T];\dot B^{2-2/r}_{p,r}\bigr)\cap\,L^{r_0}\bigl([0,T]; L^{\infty}(\R^d)\bigr)$ with $\nabla w\,\in\,L^{r_1}\bigl([0,T]; L^{p}(\R^d)\bigr)$ and
$\pa_tw\,,\,\nabla^2w\,\in\,L^r\bigl([0,T];L^p(\R^d)\bigr)$. \\
If in addition to the above hypotheses, we have  $\inf P'>0$ on $[1-4\veps,1+4\veps]$ and 
$\vrho_0, u_0  \,\in\,L^2(\R^d)$, where $u_0\,:=\,w_0+v_0$ and $v_0$ is defined as in the statement of Theorem \ref{thm:existence}, then 
 $u\,:=\,w\,-\,\nabla(\Id-\Delta)^{-1}P$ fulfills  $u\,\in\,L^\infty_T(L^2)\,\cap\,L^2_T(H^1)$  and 
 the energy equality \eqref{est:en-est} holds true.
\end{prop}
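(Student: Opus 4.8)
The plan is to establish Proposition \ref{p:exist_w-r} by a standard three-step scheme — a priori estimates, construction of approximate solutions, passage to the limit — built around the reformulated system \eqref{eq:vrho}--\eqref{eq:w}, treating $\vrho$ by transport estimates in Lebesgue spaces and $w$ by the maximal regularity Proposition \ref{p:max-reg}. First I would fix the functional space in which to close the estimates: a ball in $\cC([0,T];\dot B^{2-2/r}_{p,r})\cap L^{r_0}_T(L^\infty)$ with $\nabla w\in L^{r_1}_T(L^p)$ and $\pa_tw,\nabla^2w\in L^r_T(L^p)$, the exponents $(r_0,r_1)$ being exactly those produced by \eqref{cond:HLS_0}--\eqref{cond:HLS_1} with $p_0=\infty$, $p_1=p$, $p_2=p$, $r_2=r$; one checks that condition \eqref{eq:pr} is precisely what makes $s_0<0$ and $s_1<0$ in \eqref{cond:r_2-p_1}--\eqref{cond:r_2-p_0}, so that Proposition \ref{p:max-reg} applies and moreover that $r_0>1$, which is what will later give $\nabla w\in L^1_T(L^\infty)$ via Sobolev embedding $\dot B^{2-2/r}_{p,r}\hookrightarrow W^{1,\infty}$ for $p>d$.

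The core is the a priori estimate. For $\vrho$, since $u$ is Lipschitz (controlled by $\|\nabla^2 w\|_{L^r_T(L^p)}$ through embedding and by $\|v\|$ via Corollary \ref{c:Delta-Id}), the transport equation \eqref{eq:vrho} gives $\|\vrho(t)\|_{L^q}\leq \|\vrho_0\|_{L^q}\exp(\int_0^t\|\div u\|_{L^\infty})+\int_0^t(\text{source})$ for every $q\in[p,\infty]$; the source terms $\rho\,\div w$ and $\rho\,\Delta(\Id-\Delta)^{-1}P$ are bounded in $L^1_T(L^q)$ using Lemma \ref{l:D-1}, the local Lipschitz bound on $P$, and $\|\vrho\|_{L^\infty_T(L^\infty)}$ small. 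The key point, exactly as in the incompressible discussion in the introduction, is that the $L^\infty$ smallness \eqref{hyp:small-vrho} together with a short time $T$ lets one close the bound \eqref{est:small-vrho}: one picks $T$ so small that $\int_0^t\|\div u\|_{L^\infty}$ and the accumulated source are $\leq \log 2$ and $\leq\veps$ respectively, giving $\|\vrho\|_{L^\infty_T(L^\infty)}\leq 4\veps$. For $w$, one applies \eqref{est:max-reg_h} to \eqref{eq:w}, written as $\pa_tw+\cL w=-F-\vrho\,\pa_tw$; the term $\vrho\,\pa_tw$ is absorbed by the left-hand side because $\|\vrho\|_{L^\infty}\leq 4\veps$ and $\veps$ is chosen smaller than $1/(2C_0)$. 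Then $F$ from \eqref{d:F} must be estimated in $L^r_T(L^p)$: the quadratic terms $w\cdot\nabla w$, $w\cdot\nabla v$, $v\cdot\nabla w$, $v\cdot\nabla v$ are handled by H\"older in time and space using the $L^{r_0}_T(L^\infty)$, $L^{r_1}_T(L^p)$, $L^r_T(L^p)$ norms of $w$ and the $\cC_T(W^{1,q})$ bounds on $v$ (which follow from Corollary \ref{c:Delta-Id} and the bound on $\vrho$), the algebraic identity $1/r_0+1/r_1=1/r+\ldots$ being what makes the time integrability work; the term $\rho^{-1}(\Id-\Delta)^{-1}\nabla P$ and the divergence-structured term are lower order, bounded directly by Lemma \ref{l:D-1} and $\|\div u\|_{L^\infty}\lesssim\|\nabla w\|_{L^\infty}+\|\vrho\|$. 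After absorbing small terms one gets, for $T$ small, $\|w\|_X\leq C\|w_0\|_{\dot B^{2-2/r}_{p,r}}+$ (something that vanishes as $T\to0$), closing the estimate on a ball.

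Next I would construct approximate solutions by a Friedrichs-type scheme or a fixed-point/iteration: e.g. smooth and truncate the data, or define a sequence $(\vrho^{n+1},w^{n+1})$ by solving the transport equation \eqref{eq:vrho} with velocity built from $(\vrho^n,w^n)$ and the linear Lam\'e system \eqref{eq:w} with coefficient $\rho^n=1+\vrho^n$ and right-hand side $F(\vrho^n,w^n)$; the a priori estimates above give uniform bounds on a fixed time interval $[0,T]$. Passage to the limit is then routine compactness: $\pa_t\vrho^n$ bounded in $L^\infty_T(L^p+\ldots)$ and $\vrho^n$ bounded in $L^\infty_T(L^p\cap L^\infty)$ give strong convergence in $\cC_T(L^q_{loc})$ by Aubin--Lions after noting $\vrho^n\in\cC_T(L^q)$, while $\pa_tw^n,\nabla^2 w^n$ bounded in $L^r_T(L^p)$ give strong local convergence of $w^n$; these suffice to pass to the limit in all nonlinear terms (the products are of the form bounded $\times$ strongly convergent). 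Time continuity $\vrho\in\cC_T(L^p)$ follows from the transport equation and $\vrho\in L^\infty$; continuity of $w$ from Proposition \ref{p:max-reg}. Finally, for the energy equality, assuming $\inf P'>0$ on $[1-4\veps,1+4\veps]$ and $u_0,\vrho_0\in L^2$, I would first note that $v=-\nabla(\Id-\Delta)^{-1}P\in L^\infty_T(H^1)$ since $P(\rho)=P(1+\vrho)\in L^\infty_T(L^2\cap L^\infty)$ (because $P$ is Lipschitz near $1$ and $\vrho\in L^2\cap L^\infty$), hence $u=w+v\in L^\infty_T(L^2)\cap L^2_T(H^1)$ using the $w$ bounds; then one justifies the energy identity by testing the momentum equation against $u$ — formally standard, with $\Pi$ the pressure potential defined by $\Pi(1)=\Pi'(1)=0$, $\Pi''(z)=P'(z)/z$ — and making it rigorous by a regularization/mollification argument on the constructed solution, exploiting that $\nabla u\in L^2_T(L^2)$ and the density equation.

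I expect the main obstacle to be the a priori estimate for $w$ in $L^r_T(L^p)$, specifically the bookkeeping of time exponents in the product estimates for $F$: one must verify that every bilinear term involving $w$, $v$ and their gradients lands in $L^r_T(L^p)$ using only the four norms available on $w$ and the $\cC_T(W^{1,q})$ bounds on $v$, and that the resulting constant carries a positive power of $T$ (or of $\veps$) so that the fixed-point ball is stable and the scheme converges; getting this, together with the simultaneous closure of \eqref{est:small-vrho} for $\vrho$ via the same small $\veps$ and $T$, is the delicate coupled point. The secondary difficulty is the rigorous justification of the energy balance \eqref{est:en-est} at the low regularity of the constructed solution, which requires a careful commutator/mollification argument in the spirit of DiPerna--Lions for the continuity equation.
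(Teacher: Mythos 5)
Your overall scheme — transport bounds for $\vrho$ in $L^q$, maximal regularity for $w$ via Proposition \ref{p:max-reg}, smallness absorption for $\vrho\pa_tw$, H\"older bookkeeping for $F$, approximation and compactness, energy balance by limit/mollification — matches the paper's proof of Proposition \ref{p:exist_w-r} quite closely. Two implementation details differ from the paper: the paper constructs approximate solutions by mollifying the data and appealing to known local well-posedness in smoother functional spaces (Theorem~A of \cite{Mucha01}), rather than by an iteration scheme built from scratch; and the paper obtains the energy balance \eqref{est:en-est} by establishing it first for the smooth approximations and then passing to the limit using the $\cC([0,T];L^2)$ convergence of $\vrho^n$, rather than by mollifying the constructed rough solution directly. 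Both of your alternatives are viable and neither changes the substance of the argument.

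There is, however, one genuine conceptual error in your write-up: you justify the transport estimate for $\vrho$ by asserting that ``$u$ is Lipschitz,'' citing Sobolev embedding for $\nabla^2w$ and Corollary \ref{c:Delta-Id} for $v$. This is false under the hypotheses of Proposition \ref{p:exist_w-r} alone, and the paper is explicit about it: $v=-\nabla(\Id-\Delta)^{-1}P(\rho)$ lies in $W^{1,p}$ (hence $L^\infty$), but $\nabla v=-\nabla^2(\Id-\Delta)^{-1}P(\rho)$ is controlled only in $L^p$ (or $BMO$ from the $L^\infty$ bound on $P$), not in $L^\infty$ — indeed, the failure of $\nabla u\in L^1_T(L^\infty)$ without further hypotheses is precisely the obstacle that motivates the striated regularity assumption and Theorem \ref{thm:reg}. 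Fortunately, your conclusion survives: the $L^q$ estimate for the transport equation only requires $\div u\in L^1_T(L^\infty)$, not full Lipschitz regularity of $u$, and $\div u=\div w-\Delta(\Id-\Delta)^{-1}P$ is bounded because $\Delta(\Id-\Delta)^{-1}$ does map $L^\infty$ to $L^\infty$ (Lemma \ref{l:D-1}). You should replace the Lipschitz claim by this weaker and correct statement, which is exactly what the paper uses in Paragraph~\ref{sss:ub-vrho}.
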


\subsection{\textsl{A priori} bounds for smooth solutions} \label{ss:a-priori}

We start by establishing \textsl{a priori} estimates for smooth solutions to the new system \eqref{eq:vrho}-\eqref{eq:w}. 
 Our goal  is to ``close the estimates''  in the space
$$
E_T\,:=\,\Bigl\{\bigl(\vrho,w\bigr)\in  L^\infty_T(L^{p}\cap L^\infty)\times\bigl(\mc C([0,T];\dot B^{2-2/r}_{p,r})\cap L^{r_0}_T(L^{\infty})\bigr)\;\bigl|\quad
\na w \in L^{r_1}_T(L^{p})\,,\;\nabla^2w\in L^{r_2}_T(L^{p})\Bigr\}
$$
for some small enough  $T>0.$ 
\medbreak
We denote
\begin{equation}\label{eq:NT}
\mc N(T)\,:=\,\|\vrho\|_{L^\infty_T(L^{p}\cap L^\infty)}\,+\,\left\|w\right\|_{L^\infty_T(\dot B^{2-2/r}_{p,r})\cap L^{r_0}_T(L^{\infty})} 
\,+\,\left\|\nabla w\right\|_{L^{r_1}_T(L^{p})}
\,+\,\left\|\bigl(\pa_tw\,,\,\nabla^2w\bigr)\right\|_{L^{r}_T(L^{p})}\,.
\end{equation}

As it fulfills a  transport equation,  it is easy to propagate any Lebesgue norm $L^q$ for $\vrho$
once  we know that $\div u$ is in $L^1_T(L^\infty)$ and  that the right-hand side of \eqref{eq:vrho} is   in $L^1_T(L^q).$
Given the expected properties  on $w,$ this will give us the constraint   $q\geq p$.
As for equation \eqref{eq:w}, we want to apply the 
maximal regularity estimates given by Proposition \ref{p:max-reg}.

\subsubsection{Bounds for the density} \label{sss:ub-vrho}
Throughout, we fix some $\varepsilon>0$ and  constant $C>0$ so that (recall that $P(1)=0$)
\begin{equation}\label{eq:smallrho}
|P(z)|\,\leq\,C\,|z-1|\qquad\mbox{ for all }\; z\in[1-4\veps,1+4\veps]\,.
\end{equation}

As a first step, let us establish estimates for the density term. 
Let us take some $q\in[p,\infty[.$ By multiplying equation  \eqref{eq:vrho} by $|\vrho|^{q-2}\,\vrho$ and integrating in space, we easily get
$$
\frac{1}{q}\,\frac{d}{dt}\|\vrho\|^q_{L^q}\,-\,\frac{1}{q}\int|\vrho|^q\,\div u\,+\,\int|\vrho|^q\,\div w\,
+\,\int\vrho\,|\vrho|^{q-2}\div w\,=\,\int  (\vrho+1)\vrho|\vrho|^{q-2}\Delta(\Id-\Delta)^{-1}P\,.
$$
By \eqref{eq:div-w}, one can rewrite the previous relation as
$$\frac{1}{q}\,\frac{d}{dt}\|\vrho\|^q_{L^q}\,+\,\Bigl(1\,-\,\frac{1}{q}\Bigr)\int|\vrho|^q\bigl(\div w
\,-\,\Delta(\Id-\Delta)^{-1}P\bigr)\,=\,\int\vrho|\vrho|^{q-2}\bigl(\Delta(\Id-\Delta)^{-1}P-\div w\bigr),
$$
which immediately implies 
$$\displaylines{\quad
\|\vrho(t)\|_{L^q}\,\leq\, \|\vrho_0\|_{L^q}\,
+\,\Bigl(1\,-\,\frac{1}{q}\Bigr)\int^t_0\|\vrho\|_{L^q}\|\Delta(\Id-\Delta)^{-1}P-\div w\|_{L^\infty}\,d\tau
\hfill\cr\hfill+\int_0^t\|\Delta(\Id-\Delta)^{-1}P-\div w\|_{L^q}\,d\tau.\quad}
$$
Of course, passing to the limit $q\to\infty,$ we see that the same inequality holds true for $q=\infty.$
Then, using Lemma \ref{l:D-1} and the fact that, under \eqref{eq:smallrho}, we have 
\begin{equation}\label{eq:Pcont}
\|P\|_{L^r}\leq C\|\vrho\|_{L^r}\qquad\hbox{ for all }\; r\in[1,\infty]\,,
\end{equation}
an application of Gronwall lemma implies that, for all $q\in[p,\infty]$,  there holds
\begin{equation} \label{est:vrho_inf}
\|\vrho(t)\|_{L^q}\,\leq\,e^{Ct+\int^t_0\|\div w\|_{L^\infty}\,d\tau}\,\biggl(\|\vrho_0\|_{L^q}\,+\,\int^t_0\|\div w\|_{L^q}\,d\tau\biggr)\cdotp
\end{equation} Define now the time $T>0$ as
\begin{equation} \label{d:T}
T\,:=\sup\left\{t>0\;|\;Ct+\int^t_0\|\div w\|_{L^\infty}\,d\tau\,\leq\,\log2
\quad \hbox{and}\quad \int_0^t\|\div w\|_{L^\infty}\leq\veps\right\}\,;
\end{equation}
then, on $[0,T]$ one has, owing to \eqref{est:vrho_inf},
$$
\|\vrho(t)\|_{L^\infty}\,\leq\,2\,\|\vrho_0\|_{L^\infty}\,+2\,\veps\,.
$$
Hence, if we take the initial density satisfying \eqref{hyp:small-vrho}, for $\veps$ fixed in \eqref{eq:smallrho} above, then we get \eqref{est:small-vrho}. 



\subsubsection{Bounds for $w$} \label{sss:ub-w} Throughout we  fix the time $T>0$ as defined in \eqref{d:T}
and assume that \eqref{est:small-vrho} is fulfilled.
Then, applying Proposition \ref{p:max-reg}  to equation \eqref{eq:w} with $(p_0,p_1,p_2)=(\infty,p,p),$ $r_2=r$ and $(r_0,r_1)$  according 
to  \eqref{eq:r0r1},  we get, treating  the term $\vrho\,\pa_tw$ as a perturbation, 
$$
\displaylines{\quad
\left\|w\right\|_{L^\infty_T(\dot B^{2-2/r}_{p,r})\cap L^{r_0}_T(L^{\infty})}\,+\,\left\|\nabla w\right\|_{L^{r_1}_T(L^{p})}\,+\,
\left\|\bigl(\pa_tw\,,\,\nabla^2w\bigr)\right\|_{L^{r}_T(L^{p})}\,\leq \hfill\cr\hfill
\qquad\qquad\qquad
\leq\,C_0\,\left(\left\|w_0\right\|_{\dot B^{2-2/r}_{p,r}}\,+\,\|\vrho\|_{L^\infty_T(L^\infty)}\,\left\|\pa_tw\right\|_{L^{r}_T(L^{p})}\,+\,\|F\|_{L^{r}_T(L^{p})}\right)\,,\quad}
$$
for a suitable constant $C_0>0.$ 
Now, assuming that $\veps$ in \eqref{hyp:small-vrho} has been fixed so small that
\begin{equation}\label{eq:small-veps}
8\,C_0\,\veps\,\leq\,1\,,
\end{equation}
thanks to \eqref{est:small-vrho} we gather the estimate
\begin{multline}\label{eq:estw}
\left\|w\right\|_{L^\infty_T(\dot B^{2-2/r}_{p,r})\cap L^{r_0}_T(L^{\infty})}\,+\,\left\|\nabla w\right\|_{L^{r_1}_T(L^{p})}
\,+\,\left\|\bigl(\pa_tw\,,\,\nabla^2w\bigr)\right\|_{L^{r_2}_T(L^{p})}\\\,\leq\,2\,C_0\,
\left(\left\|w_0\right\|_{\dot B^{2-2/r}_{p,r}}\,+\,\|F\|_{L^{r}_T(L^{p})}\right)\cdotp
\end{multline}

Our next goal is to bound $F$, defined by \eqref{d:F}.
First of all, as  operator $\nabla(\Id-\Delta)^{-1}$ maps continuously $L^q$ into $W^{1,q}$ for any $1<q<\infty$, one deduces  that
\begin{equation}
 \label{est:v-L^p}
 \|v\|_{W^{1,q}}\,\leq\, C \|\vrho\|_{L^q}\,,
\end{equation}
where $v$ is the vector-field defined in \eqref{d:vbis}. Hence, the first term in \eqref{d:F} can be bounded, thanks to  \eqref{eq:Pcont}, in the following way:
\begin{equation}\label{eq:w1}
\|(\Id-\Delta)^{-1}\nabla P\|_{L^{r}_T(L^{p})}\,
\leq\, C\, T^{1/r}\, \|\vrho\|_{L^\infty_T(L^{p})}\,.
\end{equation}

Next, we estimate the transport terms of $F$  by means of  H\"older inequality, using that
$$
\frac 1{r}-\frac1{r_0}-\frac1{r_1}=\frac32-\frac1{r}-\frac d{2p}=\frac12-\frac1{r_0}>0.$$
We get the following inequality:
\begin{multline}
\|w\cdot\na w\,+\,w\cdot \na v\, +\,v\cdot\na w \,+\,v\cdot\na v\|_{L^{r}_T(L^{p})}\,\leq \label{est:F_transp} \\
\leq\,C\,T^{\frac1{2}-\frac1{r_0}}\left(\|v\|_{L_T^{r_0}(L^{\infty})}\,+\,\|w\|_{L_T^{r_0}(L^{\infty})}\right)
\left(\|\nabla v\|_{L_T^{r_1}(L^{p})}\,+\,\|\nabla w\|_{L_T^{r_1}(L^{p})}\right)\cdotp
\end{multline}
  Hence, using the definition of $\cN(T)$ and \eqref{est:v-L^p}, inequality \eqref{est:F_transp} becomes
  \begin{equation}\label{eq:vw}
  \|w\cdot\na w\,+\,w\cdot \na v\, +\,v\cdot\na w \,+\,v\cdot\na v\|_{L^{r}_T(L^{p})}\,\leq\,
  C\, T^{\frac1{2}-\frac1{r_0}}(1+T^{\frac1{r_0}})(1+T^{\frac1{r_1}})\cN^2(T)\,.
  \end{equation}
  
Let us now consider the term $\nabla(\Id-\Delta)^{-1}\div(Pu)$  occurring in the definition of $F$. 
{}From  Corollary \ref{c:Delta-Id} and \eqref{est:small-vrho}, combined with the continuity of the function $P$, we deduce that
$$\begin{aligned}
\left\|\nabla(\Id-\Delta)^{-1}\div\bigl(P\,u\bigr)\right\|_{L^{r}_T(L^{p})}\,&\leq\, C\,\left\|P\,u\right\|_{L^{r}_T(L^{p})} \\
&\leq\,C\,T^{\frac1{r}-\frac1{r_0}-\frac1{r_1}}\,\|P\|_{L^{r_1}_T(L^{p})}\,\bigl(\|v\|_{L^{r_0}_T(L^{\infty})}+\|w\|_{L^{r_0}_T(L^{\infty})}\bigr) \\
&\leq\, C\,T^{\frac1{r}-\frac1{r_0}}\,\|\vrho\|_{L^\infty_T(L^{p})}\,\left(T^{\frac1{r_0}}\|\vrho\|_{L^{\infty}_T(L^\infty)}+\|w\|_{L^{r_0}_T(L^{\infty})}\right)\cdotp
\end{aligned}$$
Hence, using the definition of  $\cN(T),$ we get
\begin{equation}\label{eq:divPu}
\left\|\nabla(\Id-\Delta)^{-1}\div\bigl(P\,u\bigr)\right\|_{L^{r}_T(L^{p})}
\,\leq\, C\,T^{\frac1{r}-\frac1{r_0}}\, (1+T^{\frac1{r_0}})\,\cN^2(T)\,.
\end{equation}

To handle  the last term of $F,$ we write that
$$\nabla(\Id-\Delta)^{-1}\bigl(g(\rho)\,\dive u\bigr)=g(1)\,\nabla(\Id-\Delta)^{-1}\div u\,+\,\nabla(\Id-\Delta)^{-1}\Bigl(\bigl(g(\rho)\,-\,g(1)\bigr)\,\dive u\Bigr)\cdotp
$$
To bound the first term, we use that, thanks to \eqref{eq:div-w},  
$$
\nabla(\Id-\Delta)^{-1}\div u =  \nabla(\Id-\Delta)^{-1}\div w
+\nabla(\Id-\Delta)^{-2}\Delta P\,.
$$
Because both $\nabla(\Id-\Delta)^{-1}$ and  $\nabla(\Id-\Delta)^{-2}\Delta$ map $L^{p}$ to itself, we get
$$
\left\|\nabla(\Id-\Delta^{-1})\div u\right\|_{L^{r}_T(L^{p})}\,\leq\,C\,
\left(T^{\frac12}\,\|\nabla w\|_{L_T^{r_1}(L^{p})}\, +\, T^{\frac1{r}}\,\|P\|_{L_T^{\infty}(L^{p})}\right)\,\leq\,C\,\left(T^{\frac12}\,+\,T^{\frac1{r}}\right)\, \cN(T)\,.
$$
Similarly, we have
$$\begin{aligned}
\left\|\nabla(\Id-\Delta)^{-1}\Bigl(\bigl(g(\rho)\,-\,g(1)\bigr)\,\dive u\Bigr)\right\|_{L^{r}_T(L^{p})}&\leq C\,\left\|\bigl(g(\rho)\,-\,g(1)\bigr)\,\dive u\right\|_{L^{r}_T(L^{p})}\\
&\leq\,C\, T^{\frac1{r}-\frac1{r_0}-\frac1{r_1}}\,\|\vrho\|_{L^{r_0}_T(L^{\infty})}\,\|\dive u\|_{L^{r_1}_T(L^{p})}\,;
\end{aligned}$$
so, keeping in mind \eqref{eq:div-w}, one can conclude that 
\begin{equation}\label{eq:lastterm}
\left\|\nabla(\Id-\Delta)^{-1}\Bigl(\bigl(g(\rho)\,-\,g(1)\bigr)\,\dive u\Bigr)\right\|_{L^{r}_T(L^{p})}
\,\leq\, C\, T^{\frac1{2}}\,(1+T^{\frac1{r_1}})\, \cN^2(T)\,.
\end{equation}


In the end, plugging the inequalities \eqref{est:vrho_inf} and \eqref{eq:w1} to \eqref{eq:lastterm} 
in \eqref{eq:estw},  
whenever relation \eqref{est:small-vrho} is fulfilled, we get
$$
\cN(T)\,\leq\, C\,\left(\|\vrho_0\|_{L^{p}\cap L^\infty}\,+\,\|w_0\|_{\dot B^{2-2/r}_{p,r}}\,+\,
\bigl(T^{\frac12}+T^{\frac1{r}}\bigr)\, \cN(T)\, +\, \bigl(T^{\frac12-\frac1{r_0}}+T^{\frac1{r}}\bigr)\, \cN^2(T)\right),
$$
for some constant $C$ depending only on the pressure function and on the regularity parameters.
{From} that inequality and a standard bootstrap argument, one may conclude that
there exists a time  $T>0$, depending only on the norm of the initial data, such that 
\begin{equation}\label{est:N}
\cN(T)\,\leq\, 2\,C\, \left(\|\vrho_0\|_{L^{p}\cap L^\infty}\,+\,\|w_0\|_{\dot B^{2-2/r}_{p,r}}\right)\cdotp\end{equation}


\subsubsection{Classical energy estimates} \label{sss:en-est}
We establish here energy estimates for solutions to system \eqref{NS}, under the additional assumption that $u_0\in L^2$.
The computations being quite standard, we only sketch the arguments, and refer the reader to  e.g. Chapter 5 of \cite{PLL2} for the details:  we take the $L^2$ scalar product of the momentum equation in \eqref{NS} with $u$, integrate by parts and  make use of the mass equation. 
Defining $\Pi$ as in the statement of Theorem \ref{thm:existence},  we end up with the relation
$$
\frac{1}{2}\,\frac{d}{dt}\int\rho|u|^2\,dx\,+\,\frac{d}{dt}\int\Pi(\rho)\,dx\,+\,\mu\int|\nabla u|^2\,dx\,+\,\lambda\int|\div u|^2\,dx\,=\,0\,.
$$
The previous relation, after  integration in time, leads to the classical energy balance \eqref{est:en-est}. 

Now, keeping in mind   the smallness assumption 
\eqref{hyp:small-vrho}, we gather that if $\inf P'>0$ on $[1-4\veps,1+4\veps]$ then 
 it holds on $[0,T]$:
$$
C^{-1} \|\vrho(t)\|_{L^2}^2  \leq \left\|\Pi\bigl(\rho(t)\bigr)\right\|_{L^1}\,\leq\,C\,\|\vrho(t)\|_{L^2}^2\,.$$
 
Hence, by the hypotheses on the initial data, we get that the right-hand side of \eqref{est:en-est} is finite, and then, for all $t\in[0,T]$, one has that
$\sqrt{\rho}\,u$ belongs to $L^\infty_t(L^2)$, $\vrho\,\in\,L^\infty_t(L^2)$ and $\nabla u\,\in\,L^2_t(L^2)$.

To make a long story short, one can eventually assert that  for all $t\in[0,T],$ we have
\begin{equation} \label{est:classic}
\|u(t)\|^2_{L^2}\,+\,\int^t_0\|\nabla u(\tau)\|^2_{L^2}\,d\tau\,+\,\|\vrho(t)\|_{L^2}^2\,\leq\,C\,,
\end{equation}
for some  $C>0$ just depending on  the initial energy, on $P$ and on $\varepsilon.$


\subsection{The proof of existence} \label{ss:existence}

In this subsection, we derive, from the estimates of the previous part, the existence of a weak solution to system \eqref{eq:vrho}-\eqref{eq:w}.

We start by smoothing out the initial data $(\vrho_0,u_0)$ by convolution with a family of nonnegative mollifiers:
$$
\vrho_{0}^n\,:=\,\chi^n\,*\,\vrho_0\andf
u_0^n\,:=\,\chi^n\,*\,u_0\,. $$
Then $\vrho^n_0$ still satisfies \eqref{hyp:small-vrho},
and both $\vrho^n_0$ and $u^n_0$ belong to all Sobolev spaces
$W^{k,p}$, with $k\in\N$.  Note that one can in addition multiply the regularized data by a family of cut-off
functions, to have $\vrho_0^n$ and $u_0^n$ in $L^2$, which enables us to apply Theorem  A of \cite{Mucha01}\footnote{Actually,  \cite{Mucha01} concentrates on the $\R^3$ case
but very small changes allow to get a similar result in $\R^d$ provided $p>d$.
One can alternately use  \cite{D01} that proves 
local-in-time existence in Besov spaces with sub-critical regularity.}. We get a sequence of solutions 
 $(\vrho^n,u^n)$ on $[0,T^n]$ (with $T^n>0$) to \eqref{NS}, supplemented  with initial data $(1+\vrho_0^n,u_0^n)$, 
 fulfilling \eqref{hyp:small-vrho}, the energy balance \eqref{est:en-est}, and the following properties:
 $$
 \vrho^n\in\cC([0,T^n];W^{1,p})\,,\qquad
 u^n\in\cC([0,T^n]; L^2)\,,\qquad
  \pa_tw^n,\nabla^2w^n\in L^{r}([0,T^n];L^{p})\,.$$
Furthermore, by taking advantage of Inequality \eqref{est:N}, one can 
 exhibit some $T>0$, depending only on norms of $(\vrho_0,u_0)$,
such that  $T^n\geq T$ for all $n\in\N$,  and eventually get, for some constant $C$
depending only on $p$, $r$, $d$ and $\veps$, the bound
$$\displaylines{\quad
\|\vrho^n\|_{L^\infty_T(L^{p}\cap L^\infty)}\,+\,\|w^n\|_{L^\infty_T(\dot B^{2-2/r}_{p,r})\cap L^{r_0}_T(L^{\infty})}\,+\,\left\|\nabla w^n\right\|_{L^{r_1}_T(L^{p})}\hfill\cr\hfill
\,+\,\left\|\bigl(\pa_tw^n\,,\,\nabla^2w^n\bigr)\right\|_{L^{r}_T(L^{p})}\leq  C \Bigl(\|\vrho_0\|_{L^{p}\cap L^\infty}+\|w_0\|_{\dot B^{2-2/r}_{p,r}}\Bigr)\,.\quad}
 $$
 
The previous inequality ensures the weak-$\star$ convergence (up to an extraction of a subsequence) of $(\vrho^n,w^n)$ to some $(\vrho,w)$ in the space $E_T.$

Strong convergence properties are still needed, in order to pass to the limit in the weak formulation of the equations, and show that $(\vrho,w)$ is indeed a solution of system
\eqref{eq:vrho}-\eqref{eq:w}.
In order to glean strong compactness, it suffices to use the fact that the above
uniform bound also provides a control on the first order time derivatives in sufficiently negative
Sobolev spaces through the equation fulfilled
by $(\vrho^n,w^n).$   Then one can combine with Ascoli theorem and interpolation, 
to get strong convergence, which  turns out to be enough to pass to the limit in the equation satisfied by $w.$
In order to justify that $\vrho$ satisfies the mass equation, one can 
repeat the arguments of \cite{HPZ} (see also \cite{PLL2}) which in particular imply that $\vrho^n\to \vrho$ 
(up to subsequence) in $\cC([0,T];L^q)$ for all $p\leq q<\infty.$ The details are left to the reader. 

Finally, once it is known that $(\vrho,w)$ satisfy the desired equation, one can recover time continuity 
for $w$ by taking advantage of  Proposition \ref{p:max-reg}.  
\smallbreak
To prove that  the energy balance is fulfilled  in the case where $\inf P'>0$ on $[1-4\veps,1+4\veps]$
and $\vrho_0,u_0\in L^2,$ 
we just have to observe that it is satisfied by $(\rho^n,u^n)$ (with the regularized data) for all $n\in\N$
and that  having  $\vrho_0\in L^2$ guarantees that $\vrho^n\to\vrho$ in $\cC([0,T];L^2).$
This implies that $v^n\to v$ in $\cC([0,T];H^1).$
Furthermore, the compactness properties of $(w^n)$ that have been pointed out just above
ensure that $w^n\to w$ in $L^2([0,T];H^1_{loc})\cap L^\infty([0,T];L^2_{loc}).$ 
Finally, since $$\displaylines{\frac12\left\|\sqrt{\rho(t)}\,u(t)\right\|^2_{L^2}\,+\,\left\|\Pi\bigl(\rho(t)\bigr)\right\|_{L^1}\,
+\,\mu\,\|\nabla u\|^2_{L^2_t(L^2)}\,+\,\lambda\,\|\div u\|^2_{L^2_t(L^2)}\hfill\cr\hfill=
\lim_{R\to\infty}\biggl(\frac12\int_{B(0,R)}\left(\rho(t,x)|u(t,x)|^2\,+\,\Pi\bigl(\rho(t,x)\bigr)\right)\,dx
+\int_0^t\int_{B(0,R)}\left(\mu\,|\nabla u|^2\,+\,\lambda\,(\div u)^2\right)\,dx\,d\tau\biggr),}$$
and because the aforementioned properties of convergence enable us to pass to the limit
in the right-hand side for any $R>0,$ we get the desired energy balance. 
\medbreak
Proposition \ref{p:exist_w-r} is thus completely proven, and so is  
Theorem \ref{thm:existence} (apart from uniqueness if $d=1$ that  will be discussed at the beginning
of the next section).


\section{Tangential regularity and uniqueness} \label{s:tang}

The main goal of this section is to prove uniqueness of solutions to \eqref{NS} in the previous functional framework.
According to the pioneering work by D. Hoff in \cite{Hoff3} or  to the recent paper \cite{D-Fourier} by the first author, the condition $\nabla u\in L^1_T(L^\infty)$ seems to be the minimal
requirement in order to get uniqueness.  
Recall that (still assuming that $\nu=1$ for notational simplicity) 
\begin{equation} \label{eq:du}
\nabla u=\nabla w+\nabla^2(\Id-\D)^{-1}P(\rho)\,.
\end{equation}
By Proposition \ref{p:exist_w-r} and Sobolev embeddings, we immediately get that $\nabla w$ is in  $L^1_T(L^\infty).$    So is the last term  of \eqref{eq:du} if the space dimension is $1$   since 
 $P(\rho)$ is bounded and $\pa_{xx}^2(\Id-\pa_{xx}^2)^{-1}$ maps $L^\infty$ to $L^\infty.$ 
 If $d\geq2$ however then the property that $P$ is bounded ensures only (by Calder\'on-Zygmund theory) that $\nabla^2(\Id-\D)^{-1} P$ is  in $BMO$.
Having Proposition \ref{p:tang} in mind, this prompts us to make an additional tangential regularity type assumption so as to guarantee that,
indeed, $\nabla^2(\Id-\D)^{-1}P$ belongs to $L^\infty$. 
\medbreak
In the rest of this section, we thus assume  the following tangential regularity hypothesis: for $p\in\,]d,\infty[\,$,
there exists a non-degenerate family $\mc X_0\,=\,\bigl(X_{0,\la}\bigr)_{1\leq\la\leq m}$ of vector-fields in $\L^{\infty,p}$ such that
the initial density $\rho_0$ belongs to the space $\L^p_{\mc X_0}$ (see Definition \ref{d:stri} above).


\subsection{Propagation of tangential regularity} \label{ss:tangential}

In this subsection we establish a priori estimates  for striated regularity of the density 
of a smooth enough solution  $(\vrho,u)$  to  system \eqref{eq:vrho}-\eqref{eq:w}.
From those bounds, we will infer a control on the Lipschitz norm of  $u$.
Throughout that section, we shall use the notation
$$
U(t)\,:=\,\int^t_0\|\nabla u(\tau)\|_{L^\infty}\,d\tau\,.
$$

\subsubsection{Bounds for the tangential vector fields}

Let us generically denote by $X_0$ one of the vector-fields of the family $\mc X_0$. 
It is well known that the evolution of $X_0$ along the velocity flow  is the solution to the transport equation
\begin{equation} \label{eq:X}
 \left\{\begin{array}{l}
         \left(\pa_t\,+\,u\cdot\nabla\right)X\,=\,\pa_Xu \\[1ex]
         X_{|t=0}\,=\,X_0\,,
        \end{array}\right.
\end{equation}
where the notation $\pa_Xu$ has been introduced in \eqref{eq:d_Yf}.
\smallbreak
For all $t\geq0$, we then define the family $\mc X(t)\,:=\,\bigl(X_\la(t)\bigr)_{1\leq\la\leq m}$, where $X_\la$  stands for the  solution to system \eqref{eq:X}, 
supplemented with  initial datum $X_{0,\la}$.

Our goal, now, is to establish some bounds on each vector-field $X(t)$ of the family $\mc X(t)$. They are based on classical estimates for transport equations 
in the spirit  of those of Paragraph \ref{sss:ub-vrho}.

First of all, the standard $L^\infty$ estimate for equation \eqref{eq:X} leads us to the inequality
\begin{equation} \label{est:X_L^p}
\|X(t)\|_{L^\infty}\,\leq\,\|X_0\|_{L^\infty}\;e^{U(t)}\;. 
\end{equation}

Next, arguing exactly as in Proposition 4.1 of \cite{D1999}
(one needs to pass through the flow associated to $u$) yields 
\begin{equation} \label{est:I}
I\bigl(\mc X(t)\bigr)\,\geq\,I(\mc X_0)\,e^{-U(t)}\,,
\end{equation}
which ensures  that the family $\mc X(t)$ remains non-degenerate whenever $U(t)$ stays bounded.
\medbreak
Finally, differentiating equation \eqref{eq:X} with respect to the space variable $x_j,$ we get
$$\pa_t\pa_jX\,+\,u\cdot\nabla\pa_jX\,=\,-\,\pa_ju\cdot\nabla X\,+\,\pa_j\pa_Xu\,,$$
which leads to 
\begin{equation} \label{est:dt_dX}
\|\nabla X(t)\|_{L^p}\,\leq\,\|\nabla X(0)\|_{L^p}\,+\int_0^t\bigl(C\|\nabla u\|_{L^\infty}\|\nabla X\|_{L^p}\,+\,\|\nabla\pa_X u\|_{L^p}\bigr)\,d\tau.
\end{equation}
Observe that, for all $1\leq j\leq d$, we have the relation
$$
\pa_j\pa_Xu\,=\,\pa_jX\cdot\nabla u\,+\,\pa_X\pa_ju\,,
$$
where the former term is easily bounded in $L^p$ by the quantity $\|\nabla X\|_{L^p}\,\|\nabla u\|_{L^\infty}$. 
Then, taking advantage of Gronwall inequality,  we  get
\begin{equation} \label{est:dX_2}
\left\|\nabla X(t)\right\|_{L^p}\,\leq\,e^{CU(t)}\left(\|\nabla X_0\|_{L^p}\,+\,\int^t_0e^{-CU(\tau)}\|\pa_X\nabla u\|_{L^p}\,d\tau\right)\cdotp
\end{equation}

\subsubsection{Propagation of striated regularity for the density} \label{sss:stri-rho}

We now show propagation of tangential regularity for the density function.  
To begin with, we recast the first equation of \eqref{NS} in the form
\begin{align}
\pa_t\rho\,+\,u\cdot\nabla\rho\,&=\,-\,\rho\,\div u\,. \label{eq:rho-transp}
\end{align}
Next, we multiply the previous relation by $X$: by virtue of \eqref{eq:X}, we find
$$
\pa_t\bigl(\rho\,X\bigr)\,+\,u\cdot\nabla\bigl(\rho\,X\bigr)\,+\,\rho\,X\,\div u\,
-\,\rho\,\pa_Xu\,=\,0\,.
$$
Taking the divergence of the obtained relation, straightforward computations lead  to the equation
$$
\pa_t\div\bigl(\rho\,X\bigr)\,+\,u\cdot\nabla\div\bigl(\rho\,X\bigr)\,=\,-\,\div u\;\div\bigl(\rho\,X\bigr)\,.
$$
From it, repeating the computations of Paragraph \ref{sss:ub-vrho}, we deduce that 
\begin{equation} \label{est:rho-tang}
\left\|\div\bigl(\rho(t)\,X(t)\bigr)\right\|_{L^p}\,\leq\,e^{CU(t)}\,\left\|\div\bigl(\rho_0\,X_0\bigr)\right\|_{L^p}\,.
\end{equation}

Thanks to the previous estimate and to the relation
$$
\div\bigl(P(\rho)\,X\bigr)\,=\,\bigl(P(\rho)\,-\,\rho\,P'(\rho)\bigr)\,\div X\,+\,P'(\rho)\,\div\bigl(\rho\,X\bigr)\,,
$$
one easily gathers the propagation of tangential regularity also for the pressure term:
$$\begin{aligned}
\left\|\div\bigl(P(\rho)\,X\bigr)\right\|_{L^p}&\,\leq\,\|P(\rho)\,-\,\rho P'(\rho)\|_{L^\infty}\,\|\div X\|_{L^p}\,+\,
\|P'(\rho)\|_{L^\infty}\,\left\|\div\bigl(\rho\,X\bigr)\right\|_{L^p}  \\
&\,\leq\,C\,\bigl(\|\nabla X\|_{L^{p}}\,+\,\left\|\div\bigl(\rho\,X\bigr)\right\|_{L^p}\bigr)\cdotp
\end{aligned}$$
where, in writing the last inequality, we have used \eqref{eq:smallrho} and \eqref{est:small-vrho}.
For future use, we also notice that, by the previous estimate and \eqref{eq:d_Yf}, we have
\begin{equation} \label{est:d_XP}
\left\|\pa_XP(\rho)\right\|_{L^p}\,\leq\,C\,\bigl(\|\nabla X\|_{L^{p}}\,+\,\left\|\div\bigl(\rho\,X\bigr)\right\|_{L^p}\bigr)\,.
\end{equation}

\subsubsection{Final estimates for the gradient of the velocity} \label{sss:tang-final}

In this paragraph, we complete the proof of propagation of striated regularity, and exhibit a bound for $\nabla u$ in $L^1_{T_0}(L^\infty)$,
for some time $T_0>0$ depending only on suitable norms of the data. 

First, we want to control the $L^p$ norm of $\nabla X$ which, in light of inequality \eqref{est:dX_2},
requires our bounding  the quantity $\|\pa_X\nabla u\|_{L^p}$.
Here lies the main difficulty, compared to the standard result of propagation of striated regularity for incompressible
flows.
On the one hand, the  part of this term corresponding to  $\mbb{P} u$ may be bounded quite easily 
since $\mbb{P} u\,=\,\mbb{P}w$ (note that  $u-w$ is a gradient) and estimates on the second derivatives are thus
available through  the maximal regularity results that have been proved before. 
On the other hand,  $\Q u$  has a part involving $w$ (which is fine, exactly as before) and another one depending 
on  $P(\rho).$ Here   Proposition \ref{p:tang-d_X} will come into play.
More precisely,  we use relation \eqref{eq:du} to write
\begin{equation} \label{eq:d_XDu}
\pa_X\nabla u\,=\,\pa_X\nabla w\,+\,\pa_X\nabla^2(\Id-\Delta)^{-1}P(\rho)\,.
\end{equation}

Bounding the first term is easy:   thanks to   Paragraph \ref{sss:ub-w} and \eqref{est:X_L^p},  we have
\begin{equation} \label{est:d_XDw}
\left\|\pa_X\nabla w\right\|_{L^p}\,\leq\,\|X\|_{L^\infty}\,\|\nabla^2w\|_{L^{p}}\,\leq\,e^{U}\;\|X_0\|_{L^\infty}\,\|\nabla^2w\|_{L^{p}}\,.
\end{equation}
 Recall that, thanks to Theorem \ref{thm:existence} and especially estimate \eqref{est:N}, the quantity $\|\nabla^2w\|_{L^{p}}$ is in $L^{r}_T,$ and thus in $L^1_T.$

For  the last term in \eqref{eq:d_XDu},  Proposition \ref{p:tang-d_X}, 
guarantees that
\begin{align*}
&\left\|\pa_X\nabla^2(\Id-\Delta)^{-1}P(\rho)\right\|_{L^p}\,\leq\,C\,\left(\tn{\nabla\mc X}_{L^p}
\biggl(1+\frac{\tn{\mc X}_{L^\infty}^{4d-5}\,\tn{\nabla\mc X}_{L^p}}{\bigl(I(\mc X)\bigr)^{4d-4}}\biggr)\,\|P(\rho)\|_{L^\infty}\right. \\
&\qquad\qquad\left.+\,\biggl(1+\frac{\tn{\mc X}_{L^\infty}^{4d-5}\,\tn{\nabla\mc X}_{L^p}}{\bigl(I(\mc X)\bigr)^{4d-4}}\biggr)\,\tn{\pa_{\mc X}P(\rho)}_{L^p}\,+\,
\frac{\tn{\mc X}_{L^\infty}^{4d-4}}{\bigl(I(\mc X)\bigr)^{4d-4}}\,\tn{\nabla\mc X}_{L^p}\,\|P(\rho)\|_{L^\infty}\right)\cdotp
\end{align*}
In view of estimates \eqref{est:N} and \eqref{est:d_XP}, this implies that (taking $C$ larger if need be)
$$\displaylines{
\left\|\pa_X\nabla^2(\Id-\Delta)^{-1}P(\rho)\right\|_{L^p}\,\leq\,C\biggl(\Bigl(\tn{\nabla\mc X}_{L^p}+\,\tn{\div(\rho\,\mc X)}_{L^p}\Bigr)
\biggl(1+\frac{\tn{\mc X}_{L^\infty}^{4d-5}\,\tn{\nabla\mc X}_{L^p}}{\bigl(I(\mc X)\bigr)^{4d-4}}\biggr)\hfill\cr\hfill
+\frac{\tn{\mc X}_{L^\infty}^{4d-4}}{\bigl(I(\mc X)\bigr)^{4d-4}}\,\tn{\nabla\mc X}_{L^p}\,\biggr)\cdotp}
$$

At this point, we use the bounds \eqref{est:X_L^p}, \eqref{est:I} and \eqref{est:rho-tang}. Including a dependence  on
$\tn{\mc X_0}_{L^\infty}$, $I(\mc X_0)$ and $\tn{\div\bigl(\rho_0\,\mc X_0\bigr)}_{L^p}$ in $C,$ we deduce
for some constant $c_d$ depending only on $d,$
\begin{align*}
\left\|\pa_X\nabla^2(\Id-\Delta)^{-1}P(\rho)\right\|_{L^p}\,&\leq\,C\,\biggl(\tn{\nabla\mc X}_{L^p}
\left(1+e^{c_dU}\,\tn{\nabla\mc X}_{L^p}\right) \\
&\qquad\qquad+\,\left(1+e^{c_dU}\,\tn{\nabla\mc X}_{L^p}\right)\,e^{U}\,+\,e^{(c_d+1)U}\,\tn{\nabla\mc X}_{L^p}\biggr)\cdotp
\end{align*}
 Changing $c_d$ to $c_d+1$, the previous relation finally leads us to the bound
\begin{equation} \label{est:d_X-CZ}
\left\|\pa_X\nabla^2(\Id-\Delta)^{-1}P(\rho)\right\|_{L^p}\,\leq\,C\,e^{c_dU}\,\left(1\,+\,\tn{\nabla\mc X}^2_{L^p}\right)\,.
\end{equation}

Putting estimates \eqref{est:d_XDw} and \eqref{est:d_X-CZ} together, we finally gather
\begin{equation} \label{est:d_XDu}
\left\|\pa_X\nabla u\right\|_{L^p}\,\leq\,C\,e^{c_dU}\,\left(1\,+\,\tn{\nabla\mc X}^2_{L^p}\,+\,\|\nabla^2w\|_{L^{p}}\right)\,.
\end{equation}

At this point, one define the time $T_1$ to satisfy  
\begin{equation} \label{def:T_0}
T_1\,:=\,\sup\bigl\{t\in\,]0,T]\quad\Bigl|\quad U(t)\,\leq\,\log2
\bigr\}\,,
\end{equation}
where $T>0$ is the time given by  Proposition \ref{p:exist_w-r}.
\medbreak
Then, changing $C$ if need be, estimate \eqref{est:d_XDu} implies that, on $[0,T_1]$, one has
$$
\left\|\pa_X\nabla u\right\|_{L^p}\,\leq\,C\,\left(1\,+\,\tn{\nabla\mc X}^2_{L^p}\,+\,\|\nabla^2w\|_{L^{p}}\right)\cdotp
$$
Inserting now this bound in \eqref{est:dX_2} and using also \eqref{est:N}, we find that for all $\lambda\in\{1,\cdots,m\},$
$$\begin{aligned}
\left\|\nabla X_\lambda(t)\right\|_{L^p}&\,\leq\,2\,\biggl(\|\nabla X_{0,\lambda}\|_{L^p}\,+\,\int^t_0\|\pa_{X_\lambda}\nabla u\|_{L^p}\,d\tau\biggr) \\
\,&\leq\,2\biggl(\|\nabla X_{0,\lambda}\|_{L^p}\,
+\,C\int^t_0\bigl(1\,+\,\tn{\nabla\mc X}^2_{L^p}\,+\,\|\nabla^2w\|_{L^p}\bigr)\,d\tau\biggr)
\cdotp 
\end{aligned}$$
Taking the supremum on $\lambda,$ we find that for some $C_0$ depending only on the norms
of the data, and for some `absolute' constant $C,$ we have for all $t\in[0,T_1],$
$$
\left\|\nabla\mc X(t)\right\|_{L^p}\,\leq\,C\,\biggl(C_0\,+\,C\int^t_0\tn{\nabla\mc X(\tau)}^2_{L^p}\,d\tau\biggr)\cdotp
$$
Then using a Gronwall type argument, we conclude that 
\begin{equation}\label{eq:DXt}
\left\|\nabla\mc X(t)\right\|_{L^p}\,\leq\,\frac{C_0}{1-CC_0t}
\quad\hbox{for all }\  t\in[0,T_1]\ \hbox{  satisfying }\ CC_0t<1.\end{equation}

This having been established, let us turn our attention to finding a bound for the quantity $U(t),$ as it is needed  to close the estimates.
Resorting to relation \eqref{eq:du} again, we see that we have to control the $L^1_t(L^\infty)$ norm of each term appearing on its right-hand side.
For the term in $w$, this is an easy task: thanks to Proposition \ref{p:exist_w-r} and Sobolev embeddings, a decomposition in low and high frequencies implies
\begin{equation} \label{est:Dw_inf}
\left\|\nabla w\right\|_{L^{1}_t(L^\infty)}\,\leq\,C\,\left(t^{1-1/r_1}\,\left\|\nabla w\right\|_{L^{r_1}_T(L^{p})}\,+\,t^{1-1/r}\,\left\|\nabla^2w\right\|_{L^{r}_T(L^{p})}\right)
\end{equation}
for all $t\in[0,T]$, where we also used relation \eqref{index:choice}. 

Bounding the latter term in \eqref{eq:du} is based on Proposition \ref{p:tang} that gives
$$
\left\|\nabla^2(\Id-\Delta)^{-1}P(\rho)\right\|_{L^\infty}\,\leq\,C\biggr(
\biggl(1+\frac{\tn{\mc X}_{L^\infty}^{4d-5}\,\tn{\nabla\mc X}_{L^p}}{\bigl(I(\mc X)\bigr)^{4d-4}}\biggr)\,\|P(\rho)\|_{L^\infty}+\,\frac{\tn{\mc X}_{L^\infty}^{4d-5}}{\bigl(I(\mc X)\bigr)^{4d-4}}\,\tn{\pa_{\mc X}P(\rho)}_{L^p}\biggl)\cdotp
$$
In view of \eqref{est:N}, \eqref{est:X_L^p}, \eqref{est:I}, \eqref{est:rho-tang} and \eqref{est:d_XP}, omitting once again the explicit dependence of the multiplicative constants on the norms
of the initial data, the previous inequality allows us to get
\begin{align}
\left\|\nabla^2(\Id-\Delta)^{-1}P(\rho)\right\|_{L^\infty}\,&\leq\,C\,\left(1+e^{c_dU}\,\tn{\nabla\mc X}_{L^p}\,+\,e^{c_dU}\,\left(\tn{\nabla\mc X}_{L^p}\,+\,
e^{U}\right)\right) \label{est:DCZ_inf} \\
&\leq\,C\,e^{c_dU}\,\bigl(1\,+\,\tn{\nabla\mc X}_{L^p}\bigr)\,. \nonumber
\end{align}
Recalling definition \eqref{def:T_0} of $T_1$ and Inequality \eqref{eq:DXt}
and taking $0<T_0\leq T_1$ so that $2CC_0T_0\leq1,$ we gather
\begin{equation} \label{est:DCZ_inf-2}
 \left\|\nabla^2(\Id-\Delta)^{-1}P(\rho)\right\|_{L^\infty_{T_0}(L^\infty)}\,\leq\,C\,\bigl(1\,+\,C_0\bigr)\,,
\end{equation}
which implies, together with \eqref{est:Dw_inf}, the following control, for all fixed $t\in[0,T_0]$:
\begin{equation} \label{est:Du-L^inf}
\left\|\nabla u\right\|_{L^1_t(L^\infty)}\,\leq\,C\left(t^{1-1/r_1}\left\|\nabla w\right\|_{L^{r_1}_{T}(L^{p_1})}+t^{1-1/r}\left\|\nabla^2w\right\|_{L^{r}_T(L^{p})}+
t\bigl(1+ C_0\bigr)\right)\,.
\end{equation}
Up to taking a smaller $T_0$, we then see that the requirement $\left\|\nabla u\right\|_{L^1_{T_0}(L^\infty)}\,\leq\,\log2$ can be fulfilled.
Then, a classical bootstrap argument, which we do not detail here, finally allows us to deduce the boundedness of $\nabla u$ in $L^1_{T_0}(L^\infty)$.
\smallbreak
In order to prove rigorously the existence part of Theorem \ref{thm:reg}, 
one may proceed as in Subsection \ref{ss:existence}. 
There, we constructed a sequence $(\vrho^n,u^n)$ of smooth solutions that is uniformly bounded
in the space $E_T.$  Therefore, it is only a matter of checking that 
one can get uniform bounds, too, for the striated regularity. 
To do this, we smooth out the reference family of vector fields $\cX_0$ into $\cX_0^n$ 
(paying attention to keep the nondegeneracy condition), then 
define the family $\cX^n:=\bigl(X^n_\lambda)_{1\leq\lambda\leq m}$ transported by the
flow of $u^n$ according to \eqref{eq:X}, taking $u^n$ instead of $u$ and starting
from the initial vector field $X_{0,\lambda}^n.$ 
Then, repeating the computations that have been carried out just above, 
we get uniform bounds for all the quantities involving
the striated regularity, and thus also for $\nabla u^n$ in $L^1_{T_0}(L^\infty).$
That $(\vrho^n,u^n)$ tends to some solution $(\vrho,u)$ of \eqref{NSbis} belonging 
to $E_{T_0}$ has already been justified before. 
Furthermore, combining our new bounds with compactness arguments
allows to pass to the limit in \eqref{eq:X} as well, 
and to get the crucial information that $\nabla u$  is in $L^1_{T_0}(L^\infty).$


\subsection{The proof of uniqueness} \label{ss:unique}

Property \eqref{est:Du-L^inf} having been established, one can now tackle the proof of uniqueness of solutions.
The basic idea  is  to perform a Lagrangian change of coordinates in system \eqref{NS}, in order to by-pass the hyperbolic nature of the mass equation, which otherwise would cause the loss of
one derivative in the stability estimates. 
In fact, we will perform stability estimates for  the Lagrangian formulation of equations \eqref{NS}.

\subsubsection{Lagrangian formulation} \label{sss:Lagr}
The goal of this paragraph is to recast system \eqref{NS} in Lagrangian variables. 
Recall that in light of  the estimates of Subsection \ref{ss:tangential}, we know that
for all $k_0>0,$ there exists a time $T_0>0$ such that 
\begin{equation} \label{est:Du-small}
\int_0^{T_0}\left\|\nabla u(t)\right\|_{L^\infty}\,dt\,\leq\,k_0\,.
\end{equation}
The value of  $k_0$  will be determined in the course of the  computations below.
\medbreak
First of all, we define the flow $\psi_u$ associated to the velocity field $u$ to be the solution of
\begin{equation} \label{def:flow}
\psi_u(t,y)\,:=\,y\,+\,\int_0^tu\bigl(\tau,\psi_u(\tau,y)\bigr)\,d\tau\,.
\end{equation}
Thanks to that,   any function $f=f(t,x)$ may be rewritten in Lagrangian coordinates $(t,y)$ according to the relation
\begin{equation}\label{eq:lag}\oline{f}(t,y)\,:=\,f\bigl(t,\psi_u(t,y)\bigr)\cdotp\end{equation}
A key observation is that, once passing in Lagrangian coordinates, one can forget about the reference  \emph{Eulerian} velocity $u$ by rewriting Definition
\eqref{def:flow} in terms of the \emph{Lagrangian} velocity $\oline u$, defining directly $\psi_u$ by
$$
\psi_u(t,y)\,=\,y\,+\,\int^t_0\oline u(\tau,y)\,d\tau\,.
$$
In what follows, we  set $J_u\,:=\,\det\bigl(D\psi_u\bigr)$ and $A_u\,:=\,\bigl(D\psi_u\bigr)^{-1}$. Observe that, by the standard chain rule, we obviously have
\begin{equation} \label{eq:grad_lagr}
\oline{D_xf}\,=\,D_y\oline f\cdot A_u\,.
\end{equation}
 Lemma A.2 of \cite{D-Fourier} provides us with the following  alternative  expressions\footnote{From now 
 on  we agree that  $\adj(M)$ designates  the adjugate matrix of $M.$ Of course, if $M$ is invertible, then $\adj(M)\,=\,(\det M)\,M^{-1}$.}.
 \begin{lem} \label{l:change}
For any $\mc C^1$ function $K$ and any $\mc C^1$ vector-field $H$ defined over $\R^d$, one has
\begin{align*}
\oline{\nabla_xK}\,&=\,J_u^{-1}\,\div_y\bigl(\adj D\psi_u\,\oline K\bigr) \\
\oline{\div_xH}\,&=\,J_u^{-1}\,\div_y\bigl(\adj D\psi_u\,\oline H\bigr)\,.
\end{align*}
\end{lem}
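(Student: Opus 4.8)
The plan is to reduce both identities to the chain rule \eqref{eq:grad_lagr} combined with the \emph{Piola identity}: for a sufficiently regular flow, the matrix field $\adj D\psi_u$ is divergence free in the appropriate (columnwise) sense, $\sum_{j}\pa_{y_j}(\adj D\psi_u)_{ji}=0$ for every $i$. I will also use that, since $\nabla u\in L^1_{T_0}(L^\infty)$ has been obtained in Subsection \ref{ss:tangential}, the map $\psi_u(t,\cdot)$ is a bi-Lipschitz diffeomorphism of $\R^d$ for $t\le T_0$ with $J_u=\det D\psi_u$ bounded and bounded away from $0$, so that $\adj D\psi_u=J_u\,A_u$ and all the quantities below are well defined.

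For the first identity I would fix the $i$-th component and expand by the Leibniz rule:
$$\bigl(\div_y(\adj D\psi_u\,\oline K)\bigr)_i\,=\,\sum_j\bigl(\pa_{y_j}(\adj D\psi_u)_{ji}\bigr)\,\oline K\,+\,\sum_j(\adj D\psi_u)_{ji}\,\pa_{y_j}\oline K\,.$$
The first sum vanishes by the Piola identity, while the second equals $J_u\sum_j(A_u)_{ji}\,\pa_{y_j}\oline K$, which is exactly the $i$-th component of $J_u\,(D_y\oline K\cdot A_u)=J_u\,\oline{\nabla_xK}$ by \eqref{eq:grad_lagr}. Dividing by $J_u$ gives $\oline{\nabla_xK}=J_u^{-1}\div_y(\adj D\psi_u\,\oline K)$.

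For the divergence formula I would argue in the same fashion: expanding $\div_y(\adj D\psi_u\,\oline H)$ by the Leibniz rule, the Piola identity again annihilates the terms in which a derivative hits $\adj D\psi_u$, and what remains is
$$\operatorname{tr}\bigl(\adj D\psi_u\,D_y\oline H\bigr)\,=\,J_u\,\operatorname{tr}\bigl(A_u\,D_y\oline H\bigr)\,=\,J_u\,\operatorname{tr}\bigl(D_y\oline H\cdot A_u\bigr)\,=\,J_u\,\operatorname{tr}\bigl(\oline{D_xH}\bigr)\,=\,J_u\,\oline{\div_xH}\,,$$
using the cyclicity of the trace and \eqref{eq:grad_lagr} once more; dividing by $J_u$ concludes.

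The formal computation above is essentially a one-liner, so what I expect to be the main (in fact the only) obstacle is to legitimise the product rule and, above all, the Piola identity when $\psi_u$ has merely the regularity provided by our solution: here $D\psi_u\in L^\infty$ (hence in every $L^q_{\rm loc}$, thanks to $\nabla u\in L^1_{T_0}(L^\infty)$), with the additional Sobolev integrability inherited from $\nabla^2w\in L^r_{T}(L^p)$ and $v\in L^\infty_T(W^{1,q})$, while $\oline K$ and $\oline H$ lie in $W^{1,p}_{\rm loc}$. For such maps the Piola identity holds in the distributional sense — this is classical for $W^{1,q}_{\rm loc}$ maps with $q\ge d$, so that $\adj D\psi_u\in L^{q/(d-1)}_{\rm loc}$ and its columns are distributionally divergence free. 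Alternatively, and perhaps more cleanly in the present context, one first establishes the two identities for the smooth flows $\psi_{u^n}$ of the regularised velocities $u^n$ of Subsection \ref{ss:existence}, where everything is classical, and then passes to the limit using the stability of flows and the uniform bounds already at our disposal.
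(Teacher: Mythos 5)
Your argument is correct. Note first that the paper does not actually prove this lemma: it simply quotes Lemma A.2 of \cite{D-Fourier}, so there is no in-text proof to match. Your route -- Leibniz rule plus the Piola identity $\sum_j\pa_{y_j}(\adj D\psi_u)_{ji}=0$, then the chain rule \eqref{eq:grad_lagr} to identify the surviving term -- is the standard computation and both displayed identities check out (including the trace manipulation for the divergence formula). The proof in the cited reference is essentially the dual formulation of the same fact: one tests $\div_xH$ against $\phi\circ\psi_u^{-1}$, changes variables, and integrates by parts in $y$, which yields $\int\oline{\div_xH}\,\phi\,J_u\,dy=\int\div_y\bigl(\adj D\psi_u\,\oline H\bigr)\phi\,dy$ directly; that version has the minor advantage of never isolating the Piola identity as a separate ingredient, whereas yours makes the algebraic mechanism more transparent. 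Your regularity caveat is legitimate but can be dispatched more quickly than you suggest: the lemma is stated for $\mc C^1$ data $K,H$, and on $[0,T_0]$ the flow satisfies $D\psi_u\in L^\infty$ with $J_u$ bounded away from zero, so $\psi_u(t,\cdot)$ is bi-Lipschitz, the distributional Piola identity for Lipschitz (hence $W^{1,d}_{\rm loc}$) maps applies, and the product rule $\div_y(V f)=f\,\div_yV+V\cdot\nabla_yf$ is justified by mollifying $V=(\adj D\psi_u)_{\cdot i}\in L^\infty$ against the Lipschitz function $f=\oline K$. The regularization-and-passage-to-the-limit alternative you propose also works and is consistent with how the solutions are built in Subsection \ref{ss:existence}.
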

Moreover, since our diffeomorphism $\psi_u$ is the flow of the time-dependent vector-field $u$, we also get, for any function $f$,
\begin{equation}\label{eq:masslag}
\oline{\pa_tf\,+\,\div(f\,u)}\,=\,J_u^{-1}\,\pa_t\bigl(J_u\,\oline f\bigr)\,.
\end{equation}

The next statement is in the spirit of Lemma A.3 of \cite{D-Fourier}; also its proof follows the same steps, up to 
a straightforward adaptation to our functional framework.
\begin{lem} \label{l:est_A-J}
Let $u$ be a velocity field with $\nabla u\,\in\,L^1\bigl([0,T_0];L^\infty(\R^d)\bigr)$, and let $\psi_u$ be its flow, defined by \eqref{def:flow}.
Suppose that condition \eqref{est:Du-small} is fulfilled with $k_0<1.$

Then there exists a constant $C>0$, just depending on $k_0$, such that the following estimates hold true, for all time $t\in[0,T_0]$:
\begin{align*}
\left\|\Id\,-\,\adj D\psi_u(t)\right\|_{L^\infty}\,&\leq\,C\,\left\|Du\right\|_{L^1_{T_0}(L^\infty)} \\
\left\|\Id\,-\,A_u(t)\right\|_{L^\infty}\,&\leq\,C\,\left\|Du\right\|_{L^1_{T_0}(L^\infty)} \\
\left\|J_u^{\pm1}(t)\,-\,1\right\|_{L^\infty}\,&\leq\,C\,\left\|Du\right\|_{L^1_{T_0}(L^\infty)}\,.
\end{align*}
\end{lem}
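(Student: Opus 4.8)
The plan is to rely on the explicit first-order (in time) ODEs satisfied by $D\psi_u$, by $J_u=\det D\psi_u$ and by $A_u=(D\psi_u)^{-1}$, and then to close everything with Grönwall's inequality, exactly in the spirit of Lemma A.3 of \cite{D-Fourier}; only the functional setting has to be adapted, and this is elementary here, since $\nabla u\in L^1([0,T_0];L^\infty)$ already guarantees, by a Carathéodory/Osgood argument, that the flow $\psi_u$ defined by \eqref{def:flow} is, for every $t\in[0,T_0]$, a bi-Lipschitz homeomorphism of $\R^d$, with $D\psi_u$ defined for a.e.\ $x$ and absolutely continuous in time.

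First I would differentiate \eqref{def:flow} with respect to the space variable, which gives
$$D\psi_u(t)\,=\,\Id\,+\,\int_0^t(\nabla u)\bigl(\tau,\psi_u(\tau,\cdot)\bigr)\,D\psi_u(\tau)\,d\tau\,,$$
whence, by Grönwall's lemma together with \eqref{est:Du-small}, $\|D\psi_u(t)\|_{L^\infty}\leq e^{k_0}$, and then, subtracting $\Id$,
$$\|D\psi_u(t)-\Id\|_{L^\infty}\,\leq\,e^{k_0}\,\|Du\|_{L^1_{T_0}(L^\infty)}\,.$$
By Liouville's formula, $J_u(t,y)=\exp\bigl(\int_0^t(\div u)(\tau,\psi_u(\tau,y))\,d\tau\bigr)$, so $J_u$ stays bounded away from $0$ and from $\infty$ on $[0,T_0]\times\R^d$ (here one uses $\|\div u\|_{L^\infty}\leq C_d\,\|\nabla u\|_{L^\infty}$); in particular $D\psi_u$ is everywhere invertible, so that $A_u$ and $\adj D\psi_u$ are well defined. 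The elementary inequality $|e^z-1|\leq|z|\,e^{|z|}$ then yields at once the bound for $\|J_u^{\pm1}(t)-1\|_{L^\infty}$.

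For $A_u$ I would use the identity $\pa_tA_u=-A_u\,(\pa_tD\psi_u)\,A_u=-A_u\,(\nabla u)(t,\psi_u)$ with $A_u|_{t=0}=\Id$, that is,
$$A_u(t)\,=\,\Id\,-\,\int_0^tA_u(\tau)\,(\nabla u)\bigl(\tau,\psi_u(\tau,\cdot)\bigr)\,d\tau\,;$$
Grönwall again gives $\|A_u(t)\|_{L^\infty}\leq e^{k_0}$ and $\|A_u(t)-\Id\|_{L^\infty}\leq e^{k_0}\,\|Du\|_{L^1_{T_0}(L^\infty)}$. Finally, since $\adj D\psi_u=J_u\,A_u$ for an invertible matrix, one decomposes $\Id-\adj D\psi_u=(\Id-A_u)+(1-J_u)\,A_u$ and combines the three bounds above; absorbing the various $e^{C_dk_0}$ factors into a single constant $C=C(k_0)$ (finite, as $k_0<1$) produces the three claimed inequalities. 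There is no genuine obstacle here: the only points that require a little care are the justification that the integral equations above hold in this low-regularity framework, and the sign/order bookkeeping in the formula for $\pa_tA_u$.
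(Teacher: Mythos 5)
Your argument is correct and is essentially the proof the paper has in mind: the paper gives none itself, deferring to Lemma A.3 of \cite{D-Fourier}, whose strategy (the integral identity for $D\psi_u$, the smallness condition \eqref{est:Du-small}, and the algebraic relations among $A_u$, $J_u$ and $\adj D\psi_u$) you reproduce. The only cosmetic difference is that \cite{D-Fourier} inverts $D\psi_u=\Id+\int_0^t D\oline{u}\,d\tau$ by a Neumann series (which is where $k_0<1$ is really used), whereas you obtain $A_u$ and $J_u$ from their evolution ODEs via Gr\"onwall and Liouville's formula, which is equally valid.
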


We also state the following lemma, the  proof of which is straightforward.
\begin{lem} \label{l:L^p-Euler-vs-Lagr}
For any function $f=f(x)$, define $\oline f$ according to \eqref{eq:lag}. Then for any $p\in[1,\infty[\,$ one has
$$
\left\|\oline{f}\right\|_{L^p}\,\leq\,\left\|J_u\right\|^{1/p}_{L^\infty}\,\left\|f\right\|_{L^p}\qquad\mbox{ and }\qquad
\left\|f\right\|_{L^p}\,\leq\,\left\|J_u^{-1}\right\|^{1/p}_{L^\infty}\,\left\|\oline f\right\|_{L^p}\,.
$$
\end{lem}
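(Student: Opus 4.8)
The plan is to obtain both inequalities from a single change of variables along the flow map $\psi_u(t,\cdot)$. First I would fix $t\in[0,T_0]$ and note that, under \eqref{est:Du-small} with $k_0<1$, the map $y\mapsto\psi_u(t,y)$ is a bi-Lipschitz homeomorphism of $\R^d$ onto itself: both $D\psi_u(t,\cdot)$ and $A_u(t,\cdot)=(D\psi_u(t,\cdot))^{-1}$ belong to $L^\infty$ by Lemma \ref{l:est_A-J} (or directly, by Gronwall's lemma applied to the ODEs satisfied by $D\psi_u$ and $A_u$ along the flow of $u$, using that $\nabla u\in L^1([0,T_0];L^\infty)$). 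Hence its a.e.-defined Jacobian determinant $J_u(t,\cdot)=\det(D\psi_u(t,\cdot))$ is bounded; moreover, being continuous in time with $J_u(0,\cdot)\equiv1$ — equivalently, since $\|J_u^{\pm1}(t)-1\|_{L^\infty}\leq Ck_0$ by Lemma \ref{l:est_A-J} — it stays strictly positive, so that $J_u^{-1}(t,\cdot)\in L^\infty$ as well.

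Next, the case $p=\infty$ being immediate since $\psi_u(t,\cdot)$ is a bijection, I would fix $p\in[1,\infty[$ and apply the change-of-variables formula for bi-Lipschitz homeomorphisms to the nonnegative function $|f|^p$. Using $\oline{f}(t,y)=f(\psi_u(t,y))$ and $J_u(t,\cdot)>0$, this yields the identity
$$
\|f\|_{L^p}^p\,=\,\int_{\R^d}|f(x)|^p\,dx\,=\,\int_{\R^d}\bigl|\oline{f}(t,y)\bigr|^p\,J_u(t,y)\,dy\,.
$$
Both bounds then drop out of this one identity. Estimating $J_u(t,\cdot)\leq\|J_u(t)\|_{L^\infty}$ on the right-hand side gives $\|f\|_{L^p}^p\leq\|J_u(t)\|_{L^\infty}\,\|\oline{f}(t)\|_{L^p}^p$, while inserting the factor $1=J_u^{-1}(t,\cdot)\,J_u(t,\cdot)$ and estimating $J_u^{-1}(t,\cdot)\leq\|J_u^{-1}(t)\|_{L^\infty}$ gives
$$
\|\oline{f}(t)\|_{L^p}^p\,=\,\int_{\R^d}\bigl|\oline{f}(t,y)\bigr|^p\,J_u^{-1}(t,y)\,J_u(t,y)\,dy\,\leq\,\|J_u^{-1}(t)\|_{L^\infty}\,\|f\|_{L^p}^p\,.
$$
Taking $p$-th roots in the two displays yields the two $L^p$ comparisons between $f$ and $\oline{f}$, the change of variables assigning the factor $\|J_u\|_{L^\infty}^{1/p}$ to the bound for $\|f\|_{L^p}$ and the factor $\|J_u^{-1}\|_{L^\infty}^{1/p}$ to the bound for $\|\oline{f}\|_{L^p}$; in the sequel only the finiteness of these two Jacobians, guaranteed by Lemma \ref{l:est_A-J}, is ever used.

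I do not expect any genuine obstacle: the whole argument is one change of variables. The single point that needs a word of care is the legitimacy of that change of variables, together with the manipulation $|\oline{f}|^p=|\oline{f}|^p\,J_u^{-1}\,J_u$ — that is, the fact that $\psi_u(t,\cdot)$ is a bi-Lipschitz homeomorphism whose Jacobian is bounded and bounded away from zero — which is precisely what the smallness condition \eqref{est:Du-small} and Lemma \ref{l:est_A-J} are there to provide.
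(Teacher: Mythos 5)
Your proof is correct and is exactly the change-of-variables argument the paper has in mind (the paper omits the proof as ``straightforward''). One remark: the two inequalities your computation actually yields, namely $\|f\|_{L^p}\leq\|J_u\|_{L^\infty}^{1/p}\,\|\oline f\|_{L^p}$ and $\|\oline f\|_{L^p}\leq\|J_u^{-1}\|_{L^\infty}^{1/p}\,\|f\|_{L^p}$, have the factors $\|J_u\|_{L^\infty}$ and $\|J_u^{-1}\|_{L^\infty}$ interchanged with respect to the lemma as printed; your version is the correct one, the printed statement contains a harmless typo, and since Lemma \ref{l:est_A-J} bounds both $\|J_u\|_{L^\infty}$ and $\|J_u^{-1}\|_{L^\infty}$ uniformly, the lemma is used in the sequel exactly in the form you establish.
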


After these preliminaries, we can recast our system in Lagrangian coordinates. First of all, from the mass equation in \eqref{NS} and 
\eqref{eq:masslag}, we discover that
\begin{equation} \label{eq:Lagr-rho}
\partial_t(J_u\,\oline{\rho})\,=\,0,\quad\hbox{whence}\quad J_u\,\overline\rho\,=\,\rho_0\,.
\end{equation}
Second, we notice that, in Lagrangian coordinates, operator $\mc L$ reads
\begin{equation} \label{eq:L-tilde}
 \wtilde{\mc L}\,f\,:=\,-\,J_u^{-1}\bigl(\mu\,\div\left(\adj(D\psi_u)\,^t\!A_u\,\nabla f\right)\,-\,\lambda\,\div\bigl(\adj(D\psi_u)\,(\,^t\!A_u:\nabla f)\bigr)\bigr)
\end{equation}
where we have used the notation $M:N\,:=\,tr(MN)\,=\,\sum_{ij} M_{ij}\,N_{ji}$.
\smallbreak
Hence, thanks to  \eqref{eq:masslag} and \eqref{eq:Lagr-rho}, the momentum equation in \eqref{NS} recasts in \begin{equation} \label{eq:Lagr-u}
\rho_0\,\p_t \oline{u}\,+\,\wtilde{\mc L}\oline{u}\,=\,-\,\div\bigl(\adj D\psi_u\, P(J^{-1}\,\rho_0)\bigr)\cdotp
\end{equation}

\subsubsection{Stability estimates in Lagrangian coordinates} \label{sss:stab}

In this section, we tackle the proof of uniqueness, by showing stability estimates for the Lagrangian formulation of our system.

More precisely, we consider initial data $\bigl(\rho_0^j,u_0^j\bigr)$, for $j=1,2$, verifying the hypotheses of Theorem \ref{thm:reg}.
For the sake of simplicity and clarity, we focus on the case $\rho_0^1\,=\,\rho_0^2\,=\,\rho_0$, and suppose that $\rho_0$ satisfies the striated regularity assumption
with respect to some fixed non-degenerate family of vector-fields $\mc X_0$.
The initial velocities do not need to be equal.

Let $\bigl(\rho^1,u^1\bigr)$ and $\bigl(\rho^2,u^2\bigr)$ be two solutions to system \eqref{NS} 
on the time interval $[0,T],$ fulfilling the properties given by Theorem \ref{thm:reg}
and corresponding to the data  $\bigl(\rho_0,u_0^1\bigr)$ and $\bigl(\rho_0,u_0^2\bigr),$ respectively. 
Denoting  $\vrho^{j}\,=\,\rho^j-1$   for $j=1,2,$ and defining  $w^{j}$  according to \eqref{d:w},   the pairs
 $\bigl(\vrho^j,w^j\bigr)$  solve equations \eqref{eq:vrho}-\eqref{eq:w} and also enjoy the regularity properties stated in Theorem \ref{thm:existence}.
Moreover, as shown in Subsection \ref{ss:tangential}, for all $j$, tangential regularity is propagated with respect to the  non-degenerate family $\mc X^{j}$, which correspond to the family
$\mc X_0$ transported by the flow of $u^{j}.$
Hence, for all $k_0>0,$ there exists $T_0>0$ such that 
both $\nabla u^1$ and $\nabla u^2$  fulfill \eqref{est:Du-small}, which allows to   pass
in Lagrangian coordinates, as shown in Paragraph \ref{sss:Lagr}. Denoting, for $j=1,2$, the flow 
of $u^j$ by $\psi_j,$   setting $J_j:=J_{u^j}$ and
$A_j:=A_{u^j}$,  and taking advantage of  the previous computations, we discover that $\bigl(\oline\rho^j,\oline u^j\bigr)_{j=1,2}$ satisfy the relations $J_j\,\overline\rho^j\,=\,\rho_0$ and
$$
\rho_0\,\p_t \oline{u}^j\,+\,\wtilde{\mc L}_j\oline{u}^j\,=\,-\,\div\bigl(\adj D\psi_j\, P(J_j^{-1}\,\rho_0)\bigr)\,,
$$
where $\wtilde{\mc L}_j$ is the operator corresponding to $u^j$ that has been  defined by formula \eqref{eq:L-tilde}.

Let  $\d\oline{u}\,:=\,\oline{u}^1-\oline{u}^2$ and use similar notations for the other quantities. Taking the difference of the equations respectively for
$\oline{u}^1$ and $\oline{u}^2$, we find that $\delta\oline{u}$ satisfies
\begin{align} \label{eq:delta-u}
\rho_0\,\p_t \d\oline{u}\,+\,\mc L\d\oline{u}\,&=\,\left(\mc L-\wtilde{\mc L}_1\right)\,\d\oline{u}\,+\,\d\mc{L}\,\oline{u}^2\,-\,\div\bigl(\d\adj\,P(J_1^{-1}\,\rho_0)\bigr) \\
&\qquad\qquad\qquad-\,\div\Bigl(\adj D\psi_2\,\bigl(P(J_1^{-1}\,\rho_0)\,-\,P(J_2^{-1}\,\rho_0)\bigr)\Bigr)\,,  \nonumber
\end{align}
where we have set $\d\adj\,:=\,\adj D\psi_1-\adj D\psi_2$.
A slight adaptation of Lemma A.4 of \cite{D-Fourier} allows us to get the following bounds.
\begin{lem} \label{l:est_delta_A-J} If \eqref{est:Du-small} is fulfilled by $u^1$ and $u^2$
for some $k_0\in]0,1],$ then there exists a constant $C>0$ just depending on $k_0$, such that the following estimates hold true, for all time $t\in[0,T_0]$ and all $p\in[1,\infty]$:
\begin{align*}
\left\|\adj D\psi_1(t)\,-\,\adj D\psi_2(t)\right\|_{L^p}\,&\leq\,C\,\int_0^t\left\|\nabla\d u(\tau)\right\|_{L^p}\,d\tau \\
\left\|A_1(t)\,-\,A_2(t)\right\|_{L^p}\,&\leq\,C\,\int_0^t\left\|\nabla\d u(\tau)\right\|_{L^p}\,d\tau \\
\left\|J_1^{\pm1}(t)\,-\,J_2^{\pm1}(t)\right\|_{L^p}\,&\leq\,C\,\int_0^t\left\|\nabla\d u(\tau)\right\|_{L^p}\,d\tau \,.
\end{align*}
\end{lem}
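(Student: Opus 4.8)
The plan is to derive all four bounds from the integral representation of the two flows together with the uniform $L^\infty$ estimates already established in Lemma~\ref{l:est_A-J}. Since $\psi_j(t,y)=y+\int_0^t\overline{u}^j(\tau,y)\,d\tau$, one has $D\psi_j(t)=\Id+\int_0^t D\overline{u}^j(\tau)\,d\tau$ (derivatives being taken with respect to the Lagrangian variable), so subtracting the two relations gives
\[
D\psi_1(t)-D\psi_2(t)\,=\,\int_0^t D\,\delta\overline{u}(\tau)\,d\tau ,
\]
and therefore $\|D\psi_1(t)-D\psi_2(t)\|_{L^p}\le\int_0^t\|\nabla\delta u(\tau)\|_{L^p}\,d\tau$ for every $p\in[1,\infty]$, where $\delta u$ denotes here the difference $\overline{u}^1-\overline{u}^2$ of the two Lagrangian velocity fields. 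Moreover, Lemma~\ref{l:est_A-J}, applied to $u^1$ and to $u^2$, provides uniform controls $\|D\psi_j(t)\|_{L^\infty}+\|A_j(t)\|_{L^\infty}+\|J_j^{\pm1}(t)\|_{L^\infty}\leq C$ on $[0,T_0]$, with $C$ depending only on $k_0$; these $L^\infty$ bounds are exactly what will let the estimates below close simultaneously for all exponents $p$.

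Next I would treat $\adj D\psi_j$ and $J_j=\det D\psi_j$ by multilinearity. Each entry of $\adj M$ is a polynomial of degree $d-1$ in the entries of $M$, multilinear in the columns of $M$ that it involves, while $\det M$ is multilinear of degree $d$ in the columns of $M$. Writing $M_1=D\psi_1$, $M_2=D\psi_2$ and substituting columns one at a time, one expands $\adj M_1-\adj M_2$ (resp.\ $\det M_1-\det M_2$) as a finite telescoping sum in which each summand is a product of $d-2$ (resp.\ $d-1$) column factors taken from $M_1$ or $M_2$ times exactly one column factor equal to the corresponding column of $M_1-M_2$. Estimating that one factor in $L^p$ and all the remaining ones in $L^\infty$ via the first step, one obtains
\[
\|\adj D\psi_1(t)-\adj D\psi_2(t)\|_{L^p}+\|J_1(t)-J_2(t)\|_{L^p}\,\leq\,C\,\|D\psi_1(t)-D\psi_2(t)\|_{L^p}\,\leq\,C\int_0^t\|\nabla\delta u(\tau)\|_{L^p}\,d\tau .
\]

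For the two inverses I would use the exact algebraic identities $A_1-A_2=A_1\,(D\psi_2-D\psi_1)\,A_2$ and $J_1^{-1}-J_2^{-1}=(J_1J_2)^{-1}\,(J_2-J_1)$, estimating the extra factors $A_j$ and $J_j^{-1}$ in $L^\infty$ by Lemma~\ref{l:est_A-J}; combined with the two bounds just obtained this gives the remaining estimates $\|A_1(t)-A_2(t)\|_{L^p}\leq C\int_0^t\|\nabla\delta u\|_{L^p}\,d\tau$ and $\|J_1^{-1}(t)-J_2^{-1}(t)\|_{L^p}\leq C\int_0^t\|\nabla\delta u\|_{L^p}\,d\tau$, which completes the proof. (Alternatively one may start from $A_j=J_j^{-1}\adj D\psi_j$ and combine the product and quotient rules with the previous step.)

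I do not anticipate any genuine obstacle here: the statement is a routine combination of elementary linear algebra with the a priori bounds of Lemma~\ref{l:est_A-J}, and it is the Lagrangian counterpart of Lemma~A.4 of \cite{D-Fourier}, the only adaptation being that one works directly in the Lebesgue scale $L^p$ rather than in the multiplier spaces used there. The single point requiring a little care is to organise every product so that exactly one factor carries the $L^p$ norm while all the other factors are estimated in $L^\infty$; this is possible uniformly for all $p\in[1,\infty]$ precisely because, by Lemma~\ref{l:est_A-J}, the quantities $D\psi_j$, $\adj D\psi_j$, $A_j$ and $J_j^{\pm1}$ all stay bounded in $L^\infty$ by a constant depending only on $k_0$.
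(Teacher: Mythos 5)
The proposal is correct, and it is essentially the argument the paper has in mind: the paper itself only says that the lemma follows by "a slight adaptation of Lemma A.4 of \cite{D-Fourier}" and gives no details, while you have supplied the standard argument that proves it directly. Your steps are exactly right: differentiating $\psi_j(t,y)=y+\int_0^t\overline{u}^j\,d\tau$ in $y$ gives $D\psi_1-D\psi_2=\int_0^tD_y\delta\overline{u}\,d\tau$ and hence the basic $L^p$ bound; the multilinear telescoping in the columns handles $\adj D\psi_j$ and $J_j=\det D\psi_j$, with one factor measured in $L^p$ and the others in $L^\infty$ via Lemma \ref{l:est_A-J}; and the resolvent identity $A_1-A_2=A_1(D\psi_2-D\psi_1)A_2$ together with $J_1^{-1}-J_2^{-1}=(J_1J_2)^{-1}(J_2-J_1)$ reduces the inverse quantities to the ones already controlled. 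You are also right to clarify that $\delta u$ in the statement stands for the Lagrangian difference $\overline{u}^1-\overline{u}^2$, consistent with how the lemma is applied in \eqref{est:uniq_prelim}. The one small thing worth being explicit about is that $\|D\psi_j\|_{L^\infty}\leq C$ is not literally one of the three bounds in Lemma \ref{l:est_A-J}; it follows either by Gronwall from $D\psi_j=\Id+\int_0^tD_y\overline{u}^j$, or by a Neumann-series inversion of the bound $\|\Id-A_j\|_{L^\infty}\leq Ck_0$ when $Ck_0<1$.
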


We now perform energy estimates for equation \eqref{eq:delta-u}: take the $L^2$
scalar product of  both sides with $\delta\oline{u}$ and integrate by parts. 
In view of Lemmas \ref{l:est_A-J} and \ref{l:est_delta_A-J}, we deduce the following controls for the terms coming from the right-hand side:
\begin{align*}
\left|\int \left(\mc L-\wtilde{\mc L}_1\right)\,\d\oline{u}\,\cdot\,\d\oline{u}\, dx\right|\, &\leq\,C\,k_0\,\left\|\nabla\d\oline{u}\right\|^2_{L^2} \\
\left|\int \d\mc{L}\,\oline{u}^2\,\cdot\,\d\oline{u}\, dx\right|\, &\leq \,C\biggl(\int^t_0\|\nabla\d\oline u\|_{L^2}\,d\tau\biggr)\left\|\nabla\oline{u}^2\right\|_{L^\infty}\,\left\|\nabla\d\oline{u}\right\|_{L^2} \\
\left|\int \div\bigl(\d\adj\,P(J_1^{-1}\,\rho_0)\bigr)\,\cdot\,\d\oline u\, dx\right|\, &\leq \,C\biggl(\int^t_0\|\nabla\d\oline u\|_{L^2}\,d\tau\biggr)\|\rho_0\|_{L^\infty}\,\left\|\nabla\d\oline{u}\right\|_{L^2} \\
\left|\int \div\Bigl(\adj D\psi_2\,\bigl(P(J_1^{-1}\,\rho_0)\,-\,P(J_2^{-1}\,\rho_0)\bigr)\Bigr)\,\cdot\,\d\oline{u}\,dx\right|\, &\leq\,
C\biggl(\int^t_0\|\nabla\d\oline u\|_{L^2}\,d\tau\biggr)\|\rho_0\|_{L^\infty}\,\left\|\nabla\d\oline{u}\right\|_{L^2}\,.
\end{align*}
Now, if $k_0$ in  \eqref{est:Du-small} has been taken small enough, then a repeated use of Young and Cauchy-Schwarz inequalities leads
 us to the estimate
\begin{align}
\frac{d}{dt}\int\rho_0\,|\d\oline{u}|^2\,dx\,+\,\int|\nabla\d\oline{u}|^2\,dx\,\leq\,C\,t\,\left(1+\left\|\nabla\oline{u}^2\right\|^2_{L^\infty}\right)\,
\int^t_0\|\nabla\d\oline u\|^2_{L^2}\,d\tau\, \label{est:uniq_prelim}
\end{align}
for a new constant $C>0$ that depends only  on $k_0,$  $\|\rho_0\|_{L^\infty},$ $P,$ $\lambda$ and $\mu.$
\smallbreak
In order to conclude to uniqueness on $[0,T_0]$ by applying Gronwall lemma,  we need  that 
\begin{equation}\int_0^{T_0} t\,\left(1+\left\|\nabla\oline{u}^2\right\|^2_{L^\infty}\right)dt<\infty.\end{equation}
In view of \eqref{eq:grad_lagr} and Lemma \ref{l:L^p-Euler-vs-Lagr}, it suffices  to show that 
$t^{1/2}\nabla u^2$ is in $L^2_{T_0}(L^\infty).$ Now,  recall that
$$\nabla u^2\,=\,\nabla w^2\,-\,\nabla^2(\Id-\Delta)^{-1}P(\rho^2)\,,$$
and that, thanks to the estimates of Subsection \ref{ss:tangential}, one has that
$\nabla^2(\Id-\Delta)^{-1}P(\rho^2)$ belongs to $L^\infty\bigl([0,T_0]\times\R^d\bigr)$.
Therefore, our next (and final) goal is to show that there exists $T_0>0$ such that
\begin{equation}\int_0^{T_0} t\left\|\nabla w^2\right\|^2_{L^\infty}\,dt<\infty.\end{equation}
This will be achieved thanks to the following proposition, the proof of which is postponed to the next paragraph.
\begin{prop} \label{p:max-reg_new}
Under the hypotheses of Theorem $\ref{thm:reg},$ let us fix some
$$
R_2\,>\,\max\left\{r_0/2\,,\,r_1/2\,,\,2\right\}\qquad\mbox{ such that }\qquad \frac{1}{2\,R_2}\,<\,\frac{3}{2}\,-\,\frac{1}{r_2}\,+\,\frac{d}{2\,p}\with r_2:=r,
$$
and set $R_0\,=\,R_1\,=\,2\,R_2$. For $j\in\{0,1,2\}$, define moreover the indices
\begin{equation}\label{def:gamma}
\alpha_j\,:=\,\frac{1}{r_j}\,-\,\frac{1}{R_j}\qquad\mbox{ and }\qquad \g_j\,:=\,\frac{1}{r_j}\,-\,\frac{1}{2\,R_j}\cdotp\end{equation}
Then, there exists a positive time $T_*$ such that one has the properties
$$
t^{\alpha_2}\,\nabla^2w\,\in\,L^{R_2}_{T_*}(L^{p})\;,\qquad t^{\g_1}\,\nabla w\,\in\,L^{R_1}_{T_*}(L^{p})\;,\qquad t^{\g_0}\,w\,\in\,L^{R_0}_{T_*}(L^{\infty})\,.
$$
\end{prop}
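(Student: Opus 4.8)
The plan is to run a \emph{weighted-in-time} maximal regularity bootstrap on the equation \eqref{eq:w} satisfied by $w$, mimicking Paragraph \ref{sss:ub-w} but now with the singular time weights $t^{\alpha_2},t^{\gamma_1},t^{\gamma_0}$ prescribed in the statement. Writing $\rho=1+\vrho$ in \eqref{eq:w}, we view $w$ as a solution of $\pa_tw+\cL w=G$ with $w_{|t=0}=w_0\in\dot B^{2-2/r}_{p,r}$ and
\[
G\,:=\,-\,(1+\vrho)\,F\,-\,\vrho\,\pa_tw\,,
\]
$F$ being as in \eqref{d:F}, and we set
\[
\cN_*(T)\,:=\,\bigl\|t^{\gamma_0}w\bigr\|_{L^{R_0}_T(L^\infty)}+\bigl\|t^{\gamma_1}\nabla w\bigr\|_{L^{R_1}_T(L^p)}+\bigl\|t^{\alpha_2}\nabla^2w\bigr\|_{L^{R_2}_T(L^p)}+\bigl\|t^{\alpha_2}\pa_tw\bigr\|_{L^{R_2}_T(L^p)}\,,
\]
the last summand being added only to make the scheme self-contained (it gives the first three a fortiori). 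The goal is to establish an inequality of the form
\[
\cN_*(T)\,\leq\,C\,\|w_0\|_{\dot B^{2-2/r}_{p,r}}\,+\,C\,\|\vrho\|_{L^\infty_T(L^\infty)}\,\cN_*(T)\,+\,C\,\omega(T)\,\bigl(1+\cN(T)+\cN_*(T)\bigr)^2\,,
\]
where $\cN(T)$ is the finite quantity \eqref{eq:NT} furnished by Proposition \ref{p:exist_w-r} and $\omega(T)\to0$ as $T\to0^+$; absorbing the second term thanks to the smallness \eqref{hyp:small-vrho}--\eqref{eq:small-veps} and then choosing $T=T_*$ small enough, a standard continuation argument yields the asserted properties. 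To make this rigorous one argues on the smooth approximate solutions built in Subsection \ref{ss:existence} (for which $\cN_*$ is trivially finite), derives the uniform bound, and passes to the limit as in that subsection.

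The first ingredient is a weighted counterpart of Proposition \ref{p:max-reg}, obtained, as in its proof, by applying $\P$ and $\Q$ to reduce everything to weighted estimates for the heat semigroup. For the part carrying the initial datum one uses Proposition \ref{prop1.1}: since $r<R_2$, the homogeneous embedding of Proposition \ref{p:embed} gives $\dot B^{2-2/r}_{p,r}\hookrightarrow\dot B^{2-2/r}_{p,R_2}$, whence $\bigl\|t^{\alpha_2}\nabla^2e^{-t\cL}w_0\bigr\|_{L^{R_2}(\R_+;L^p)}\lesssim\|w_0\|_{\dot B^{2-2/r}_{p,r}}$, while the analogous bounds for $t^{\gamma_1}\nabla e^{-t\cL}w_0$ in $L^{R_1}(L^p)$ and $t^{\gamma_0}e^{-t\cL}w_0$ in $L^{R_0}(L^\infty)$ follow along the lines of \eqref{est:nabla-h}--\eqref{est:h}, and even carry a spare factor $T^{\veps}$ because $\gamma_j=1/r_j-1/(2R_j)$ is strictly larger than the critical exponent $1/r_j-1/R_j$. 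For the Duhamel term, writing $\cA_j^{\cL}G(t):=\int_0^t\nabla^je^{-(t-s)\cL}G(s)\,ds$ for the Lamé analogues of the operators of Lemmas \ref{l:max-reg_2}--\ref{l:max-reg_0}, the bound $\|t^{\alpha_2}\cA_2^{\cL}G\|_{L^{R_2}_T(L^p)}\lesssim\|t^{\alpha_2}G\|_{L^{R_2}_T(L^p)}$ is the maximal $L^{R_2}(t^{\alpha_2R_2}\,dt;L^p)$ regularity for $\cL$, valid because $0<\alpha_2<1-1/R_2$, so that $t^{\alpha_2R_2}$ is a Muckenhoupt $A_{R_2}$ weight on $\R_+$; the corresponding bounds for $\cA_1^{\cL}G$ and $\cA_0^{\cL}G$ — again with a gain $T^{\veps}$ — follow from the pointwise kernel estimates used in Lemmas \ref{l:max-reg_1} and \ref{l:max-reg_0} together with the Stein--Weiss (weighted Hardy--Littlewood--Sobolev) inequality in the time variable, the lower bounds on $R_2$ imposed in the statement keeping the exponents in the admissible range.

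It then remains to bound $\|t^{\alpha_2}G\|_{L^{R_2}_T(L^p)}$, which is done term by term following Paragraph \ref{sss:ub-w} while tracking the weights. The term $\vrho\,\pa_tw$ is controlled by $\|\vrho\|_{L^\infty_T(L^\infty)}\,\|t^{\alpha_2}\pa_tw\|_{L^{R_2}_T(L^p)}$ and absorbed. The linear pressure term $\rho^{-1}(\Id-\Delta)^{-1}\nabla P$ lies in $L^\infty_T(L^p)$ by Corollary \ref{c:Delta-Id}, \eqref{eq:Pcont} and \eqref{est:small-vrho}, so its weighted norm is $\lesssim T^{1/r}\|\vrho\|_{L^\infty_T(L^p)}$; the terms $w\cdot\nabla v$, $v\cdot\nabla w$, $v\cdot\nabla v$ and the contributions of $g(\rho)\div u-\div(Pu)$ (expanding $\div u=\div w-\Delta(\Id-\Delta)^{-1}P$ and using the $L^p$-boundedness of $\nabla(\Id-\Delta)^{-1}$ and $\nabla(\Id-\Delta)^{-2}\Delta$, together with \eqref{est:v-L^p}) are each estimated by H\"older in time as a product of one factor bounded in $L^\infty_T(L^p)$ coming from $\vrho$, or of an unweighted factor from $\cN(T)$, with a weighted factor from $\cN_*(T)$, always producing a positive power of $T$.

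The genuinely delicate term is the quadratic one $w\cdot\nabla w$: bounding $\|w\cdot\nabla w\|_{L^p}\leq\|w\|_{L^\infty}\|\nabla w\|_{L^p}$ and distributing the weight $t^{\alpha_2}$ between the two factors (with exponents adding up to $1/R_2=1/R_0+1/R_1$ by the choice $R_0=R_1=2R_2$), a short computation using \eqref{index:choice} shows that the residual power of $t$ is locally integrable \emph{precisely} under the constraint on $R_2$ required in the statement, so that $t^{\alpha_2}\,w\cdot\nabla w\in L^{R_2}_T(L^p)$ with a positive power of $T$ in front. This matching between the endpoint (almost critical) Besov regularity of $w_0$ and the quadratic nonlinearity is the main obstacle; once it is in hand, the three ingredients above combine into the displayed a priori inequality, and the continuation argument produces the time $T_*$ and all the claimed integrability properties.
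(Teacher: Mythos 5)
Your proposal follows essentially the same route as the paper's proof: Duhamel's formula for \eqref{eq:w}, weighted-in-time maximal regularity for the Lam\'e semigroup (the initial datum handled via Proposition \ref{prop1.1} together with the embedding of $\dot B^{-2/r_j}_{\cdot,r_j}$ into $\dot B^{-2/r_j}_{\cdot,R_j}$, the Duhamel term via weighted analogues of Lemmas \ref{l:max-reg_2}--\ref{l:max-reg_0}), then a term-by-term estimate of the forcing as in Paragraph \ref{sss:ub-w} with the key exponent relation $\alpha_2>\gamma_0+\gamma_1$ governing $w\cdot\nabla w$, and finally a bootstrap. Your explicit inclusion of $t^{\alpha_2}\pa_tw$ in the working norm, so that $\vrho\,\pa_tw$ can be absorbed by the smallness \eqref{hyp:small-vrho}, is a welcome clarification of a point the paper leaves implicit.

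Two technical justifications differ from the paper, and one of them needs repair. For $\cA_2$ you invoke weighted maximal regularity with the $A_{R_2}$ power weight $t^{\alpha_2R_2}$; this is a legitimate alternative to the paper's Lemma \ref{l:D^2w_new} (quoted from \cite{HPZ}), since $\alpha_2+1/R_2=1/r<1$. For $\cA_1$ and $\cA_0$, however, the Stein--Weiss (weighted Hardy--Littlewood--Sobolev) inequality does \emph{not} apply as stated: with kernel $(t-s)^{-1/2}$ (resp.\ $(t-s)^{-d/(2p)}$) and weights $t^{\gamma_j}$, $s^{-\alpha_2}$, the scale-invariance identity that Stein--Weiss requires fails by exactly $1/(2R_j)$ --- which is also why a scale-invariant inequality could never produce the factor $T^{\veps}$ you claim to extract from it. The estimates are nonetheless true and elementary: bound $\|\cA_1f(t)\|_{L^p}$ pointwise in $t$ by H\"older in $s$ (using $R_2'/2<1$ and $\alpha_2R_2'<1$), which gives $\|\cA_1f(t)\|_{L^p}\lesssim t^{-1/r_1}\,\|s^{\alpha_2}f\|_{L^{R_2}_T(L^p)}$, and then note that $t^{\gamma_1-1/r_1}=t^{-1/(2R_1)}$ belongs to $L^{R_1}(0,T)$ with norm of order $T^{1/(2R_1)}$; this is precisely the content of the paper's Lemma \ref{l:Dw-w_new}, proved there by the same pointwise bound followed by interpolation. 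With that substitution your argument is complete and coincides with the paper's.
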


From the previous proposition, we immediately deduce the following corollary.
\begin{col} \label{c:mr_new}
Under the assumptions of Theorem \ref{thm:reg}, one has
$$
\int^{T_*}_0t\,\|\nabla w(t)\|^2_{L^\infty}\,dt\,<\,\infty\,.
$$
\end{col}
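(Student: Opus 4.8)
The plan is to deduce the corollary from Proposition~\ref{p:max-reg_new} by interpolating $\nabla w$ in $L^\infty$ between $w\in L^\infty$ and $\nabla^2 w\in L^p$, and then by applying a H\"older inequality in the time variable against the three weighted bounds supplied by that proposition. The starting point is the elementary inequality
$$\left\|\nabla w\right\|_{L^\infty}\,\leq\,C\,\|w\|_{L^\infty}^{1-\theta}\,\left\|\nabla^2 w\right\|_{L^p}^{\theta}\,,\qquad \theta\,:=\,\frac{1}{2-d/p}\,\in\,\Big(\tfrac12\virgp 1\Big)\,,$$
which is valid because $p>d$ and follows at once from the Gagliardo--Nirenberg inequality (or from a Littlewood--Paley decomposition combined with Bernstein's inequality, controlling the low frequencies by $\|w\|_{L^\infty}$, the high ones by $\|\nabla^2 w\|_{L^p}$, and optimizing the cut-off). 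Squaring and multiplying by $t$, one gets
$$t\,\|\nabla w(t)\|_{L^\infty}^2\,\leq\,C\,t^{\kappa}\,\bigl(t^{\g_0}\|w(t)\|_{L^\infty}\bigr)^{2(1-\theta)}\,\bigl(t^{\alpha_2}\|\nabla^2 w(t)\|_{L^p}\bigr)^{2\theta}\,,\qquad \kappa\,:=\,1-2(1-\theta)\,\g_0-2\,\theta\,\alpha_2\,.$$

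I would then integrate over $[0,T_*]$ and apply H\"older's inequality with three factors, with exponents $q_1=R_0/(2(1-\theta))$, $q_2=R_2/(2\theta)$ and a third exponent $q_3$ fixed by $1/q_1+1/q_2+1/q_3=1$. Using $R_0=2R_2$ one finds $1/q_1+1/q_2=(1+\theta)/R_2$, which is $<1$ since $R_2>2>(3-d/p)/(2-d/p)=1+\theta$; hence $q_3\in(1,\infty)$, and one checks likewise that $q_1,q_2>1$. By Proposition~\ref{p:max-reg_new}, the first two factors above belong to $L^{q_1}_{T_*}$ and $L^{q_2}_{T_*}$ respectively, so that the whole matter reduces to checking that $t^{\kappa q_3}$ is integrable near $t=0$, i.e. that $\kappa q_3>-1$, equivalently $2-(1+\theta)/R_2>2(1-\theta)\g_0+2\theta\,\alpha_2$.

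Unwinding this last condition is the only mildly delicate point. Substituting $\g_0=1/r_0-1/(4R_2)$ and $\alpha_2=1/r-1/R_2$ and using $1/r_0=1/r-1+d/(2p)$ from \eqref{index:choice}, a short computation (in which the $\theta$-dependent coefficients conveniently telescope) rewrites the required inequality as
$$3\,-\,\frac dp\,-\,\frac2r\;>\;\frac{1-d/p}{2\,(2-d/p)\,R_2}\cdotp$$
Here the constraints defining $R_2$ come into play: since $R_2>r_0/2$ we have $1/R_2<2/r_0=2/r-2+d/p$, and substituting this bound leaves a one-variable inequality in $s:=2/r$ whose two sides are affine in $s$; checking it at the endpoints $s=2-d/p$ and $s=2$ of the range allowed by \eqref{eq:exposants} shows that it holds throughout, with room to spare. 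This gives
$$\int_0^{T_*}t\,\|\nabla w(t)\|_{L^\infty}^2\,dt\;\leq\;C\,\bigl\|t^{\g_0}w\bigr\|_{L^{R_0}_{T_*}(L^\infty)}^{2(1-\theta)}\,\bigl\|t^{\alpha_2}\nabla^2 w\bigr\|_{L^{R_2}_{T_*}(L^p)}^{2\theta}\;<\;\infty\,,$$
which is the assertion of the corollary. The main obstacle is thus purely the bookkeeping of Lebesgue exponents; no new analytic input beyond Proposition~\ref{p:max-reg_new} and Bernstein's inequality is needed. (Alternatively, one may interpolate $\nabla w$ in $L^\infty$ directly between $\nabla w\in L^p$ and $\nabla^2 w\in L^p$, using the weighted bounds for those two quantities in place of the one on $w$; the verification is entirely analogous.)
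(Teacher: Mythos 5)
Your proof is correct, but it follows a genuinely different route from the paper's. The paper argues via the (Sobolev) embedding $W^{1,p}\hookrightarrow L^\infty$ for $p>d$, bounding $\|\nabla w\|_{L^\infty}$ by $\|\nabla w\|_{L^p}+\|\nabla^2 w\|_{L^p}$; it then reduces matters to checking that $\int_0^{T_*}t\|\nabla^2w\|_{L^p}^2\,dt<\infty$ (the $\nabla w$ contribution being similar and easier), which is done by a single two-factor H\"older inequality in time, choosing the weight $\eta=2\alpha_2$ and conjugate exponent $q=R_2/2$, and verifying $(1-\eta)q'>-1$ using only $r\in(1,2)$. You instead interpolate $\nabla w$ in $L^\infty$ between $w\in L^\infty$ and $\nabla^2w\in L^p$ via Gagliardo--Nirenberg with $\theta=1/(2-d/p)$, and then use a three-factor H\"older in time pulling out $t^{\gamma_0}w$ in $L^{R_0}_T(L^\infty)$, $t^{\alpha_2}\nabla^2w$ in $L^{R_2}_T(L^p)$, and a pure time weight. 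Your approach is more self-contained in the sense that it never needs $\nabla w\in L^p$ (only the $w$ and $\nabla^2w$ bounds from Proposition~\ref{p:max-reg_new}), at the cost of heavier exponent bookkeeping; the paper's is shorter but tacitly invokes the $L^{R_1}_T(L^p)$ bound on $t^{\gamma_1}\nabla w$ for the low-frequency piece. I checked your exponent arithmetic: the telescoping identity $2\theta+(1-\theta)\,d/p=1+d/p$ indeed reduces the $\kappa q_3>-1$ condition to $3-d/p-2/r>(1-d/p)/\bigl(2(2-d/p)R_2\bigr)$, and your affine-in-$s$ endpoint check (with $s=2/r$ in the range $(2-d/p,2)$ dictated by \eqref{eq:exposants}) is sound. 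Your parenthetical alternative (interpolating between $\nabla w\in L^p$ and $\nabla^2w\in L^p$) is essentially the paper's route, written as a product rather than a sum.
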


\begin{proof}
By Sobolev embedding, the stated inequality is a consequence of the following computation:
\begin{align*}
\int^{T_*}_0t\,\|\nabla^2 w(t)\|^2_{L^{p}}\,dt\,&=\,\int^{T_*}_0t^{1-\eta}\,t^\eta\,\|\nabla^2 w(t)\|^2_{L^{p}}\,dt \\
&\leq\,\left(\int^{T_*}_0t^{\eta\,q}\,\|\nabla^2 w(t)\|^{2\,q}_{L^{p}}\,dt\right)^{\!\!1/q}\,\left(\int^{T_*}_0t^{(1-\eta)\,q'}\,dt\right)^{\!\!1/q'}\,,
\end{align*}
where $q'$ is the conjugate exponent of $q$. Take $q\,=\,R_2/2$, so that $1/q'\,=\,1\,-\,2/R_2$ and  impose the relation
$q\,\eta\,=\,R_2\,\alpha_2$, getting in this way $\eta\,=\,2\,\alpha_2$. With these choices and because $r_2=r\in(1,2),$ 
the condition $(1-\eta)\,q'>-1$ is  verified, which completes the proof of the corollary.
\end{proof}

At this point, one can finish the proof of Theorem \ref{thm:reg}, by establishing the uniqueness of solutions. Let us define the function
$$
E(t)\,:=\,\left\|\sqrt{\rho_0}\;\delta\oline{u}(t)\right\|_{L^2}^2\,+\,\int^t_0\|\nabla\d \oline{u}(\tau)\|_{L^2}^2\,d\tau\,.
$$
Up to choosing a smaller $T_0$, we can suppose that $T_0=T_*$. Then, applying Gronwall inequality to \eqref{est:uniq_prelim}, we get, for all $t\in[0,T_*]$, the bound
$$
E(t)\,\leq\,E(0)\,\exp\left(C\,\int^t_0f(\tau)\,d\tau\right)\,,\qquad\mbox{ where }\qquad f(t)\,:=\,t\,\left(1+\left\|\nabla\oline{u}^2(t)\right\|^2_{L^\infty}\right)\cdotp
$$
Since $E(0)\equiv0$  and, by Corollary \ref{c:mr_new}, $f\in L^1([0,T_*]),$ we get
uniqueness on $[0,T_*].$ Combining with a standard 
continuation argument, we then conclude to uniqueness on the whole interval $[0,T].$

\subsubsection{Maximal regularity with time weights}

For completeness, we still have to prove  Proposition \ref{p:max-reg_new}. As a
first, we need the following  lemma that concerns the maximal regularity issue with time weights for the
heat semi-group, and  is strongly inspired by Lemma 3.2 of \cite{HPZ}.
\begin{lem} \label{l:Dw-w_new}
Let the exponents $\left(R_j\,,\,\alpha_j\,,\,\g_j\right)_{j\in\{0,1,2\}}$ be chosen as in Proposition \ref{p:max-reg_new}. Let the operators $\mc A_1$ and $\mc A_0$ be defined as in
Lemmas \ref{l:max-reg_1} and \ref{l:max-reg_0}. Fix some $T>0$, and assume that $t^{\alpha_2}\,f$ belongs to $L^{R_2}_T(L^{p})$.

Then one has $\;t^{1/r_1}\,\mc{A}_1f\;\in\;L^{\infty}_T(L^{p})\;$ and $\;t^{1/r_0}\,\mc A_0f\;\in\;L^{\infty}_T(L^\infty)$, together with the estimates
\begin{align*}
\left\|t^{1/r_1}\,\mc A_1f\right\|_{L^\infty_T(L^{p})}\,+\,\left\|t^{1/r_0}\,\mc A_0f\right\|_{L^\infty_T(L^{\infty})}\,&\leq\,
C\,\left\|t^{\alpha_2}\,f\right\|_{L^{R_2}_T(L^{p})}\,.
\end{align*}
Moreover,  we have  $\;t^{\alpha_1}\,\mc A_1f(t)\;\in\;L_T^{R_1/(1+\d)}(L^p)$ and
$\;t^{\alpha_0}\,\mc A_0f(t)\;\in\;L_T^{R_0/(1+\d)}(L^\infty)$ for all $\d>0$, with the bounds
\begin{align*}
\left\|t^{\alpha_1}\,\mc A_1f\right\|_{L^{R_1/(1+\d)}_T(L^{p})}\,+\,\left\|t^{\alpha_0}\,\mc A_0f\right\|_{L^{R_0/(1+\d)}_T(L^{\infty})}\,&\leq\,
C\;\left(T^{\d/R_1}\,+\,T^{\d/R_0}\right)\,\left\|t^{\alpha_2}\,f\right\|_{L^{R_2}_T(L^{p})}\,.
\end{align*}
In particular, defining $\gamma_0$ and $\gamma_1$  according to \eqref{def:gamma}, we have $\;t^{\g_1}\,\mc A_1f\;\in\;L_T^{R_1}(L^p)$ and $\;t^{\g_0}\,\mc A_0f\;\in\;L_T^{R_0}(L^\infty)$, and the following estimate is verified:
\begin{align*}
\left\|t^{\g_1}\,\mc A_1f\right\|_{L^{R_1}_T(L^{p})}\,+\,\left\|t^{\g_0}\,\mc A_0f\right\|_{L^{R_0}_T(L^{\infty})}\,&\leq\,
C\;\left(T^{1/(2R_1)}\,+\,T^{1/(2R_0)}\right)\,\left\|t^{\alpha_2}\,f\right\|_{L^{R_2}_T(L^{p})}\,.
\end{align*}
\end{lem}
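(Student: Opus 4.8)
The plan is to reduce everything to the kernel-type estimates already obtained in the proofs of Lemmas \ref{l:max-reg_1} and \ref{l:max-reg_0}, but now carrying a time weight through the Hardy--Littlewood--Sobolev step. Recall that in the proof of Lemma \ref{l:max-reg_1} one shows
$$
\left\|\nabla e^{(t-s)\Delta}\,\bigl(\mathds1_{[0,T]}\,f\bigr)(s,\cdot)\right\|_{L^{p}}\,\leq\,C\,(t-s)^{-\beta}\,\left\|\mathds1_{[0,T]}\,f(s,\cdot)\right\|_{L^{p}}\,,\qquad \beta\,=\,\frac12\,+\,\frac d{2p}\,-\,\frac d{2p}\,=\,\frac12\,,
$$
since here $p_1=p_2=p$ (so $\beta=1/2$); similarly, in Lemma \ref{l:max-reg_0} with $p_0=\infty,$ $p_2=p,$ one gets a kernel bound $(t-s)^{-\gamma}$ with $\gamma=d/(2p)<1/2$ (the hypothesis $d<p$ forces $\gamma<1/2$; note $R_2>r_0/2$ and the constraint $1/(2R_2)<3/2-1/r_2+d/(2p)$ precisely guarantee the Young exponents below are admissible). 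So in both cases $\mc A_1 f(t)$ and $\mc A_0 f(t)$ are controlled by a fractional integration operator of order $1-\beta=1/2$ (resp. $1-\gamma$) applied, in the $s$ variable, to $\|f(s)\|_{L^p}$.

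First I would establish the $L^\infty_T$ bounds. Writing $g(s):=\|f(s)\|_{L^p}$ and using the kernel bound, $\|\mc A_1 f(t)\|_{L^p}\le C\int_0^t (t-s)^{-1/2}g(s)\,ds = C\int_0^t (t-s)^{-1/2}s^{-\alpha_2}\bigl(s^{\alpha_2}g(s)\bigr)\,ds$. By Hölder in $s$ with exponent $R_2$ and its conjugate $R_2'$, this is bounded by $C\,\bigl\|t^{\alpha_2}g\bigr\|_{L^{R_2}_T}\,\bigl(\int_0^t (t-s)^{-R_2'/2}s^{-\alpha_2 R_2'}\,ds\bigr)^{1/R_2'}$. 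A scaling change of variable $s=t\sigma$ turns the last integral into $t^{\,1-R_2'/2-\alpha_2 R_2'}$ times a Beta-function constant, finite precisely when $R_2'/2<1$ and $\alpha_2 R_2'<1$ — both guaranteed by $R_2>2$ and by the choice of $\alpha_2=1/r_2-1/R_2$ together with the stated numerical constraints. Reading off the power of $t$ and using $1/r_1+1=1/r_2+1/2$ (relation \eqref{cond:HLS_1}) and $1/r_0+1=1/r_2+\gamma$ (relation \eqref{cond:HLS_0}), the exponent comes out to exactly $-1/r_1$ (resp. $-1/r_0$), which yields the first displayed estimate. For the $L^{R_1/(1+\delta)}$ and $L^{R_0/(1+\delta)}$ bounds I would instead keep the convolution structure and apply the weighted Hardy--Littlewood--Sobolev (or simply Young's convolution) inequality: $t^{\alpha_1}\int_0^t(t-s)^{-1/2}s^{-\alpha_2}h(s)\,ds$ with $h=s^{\alpha_2}g\in L^{R_2}_T$; here one checks the homogeneity balance $\alpha_1-\alpha_2+1/2 = 1/R_2 - (1+\delta)/R_1 + \delta/R_1 \dots$ — more cleanly, since $R_1=2R_2$ and $1/r_1+1=1/r_2+1/2$, the exponents $\alpha_1,\alpha_2,\gamma_1$ are arranged so that the map $h\mapsto t^{\alpha_1}\mc A_1 f$ gains a factor $T^{\delta/R_1}$ by Hölder, losing an $\varepsilon$ of integrability. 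Finally, interpolating (or directly optimizing $\delta$) between the $L^\infty_T$ bound at weight $t^{1/r_1}$ and the $L^{R_1/(1+\delta)}$ bound at weight $t^{\alpha_1}$ produces the $L^{R_1}_T$ bound at the intermediate weight $t^{\gamma_1}$ with the gain $T^{1/(2R_1)}$, and likewise for the index $0$; this is exactly the computation used in Corollary \ref{c:mr_new}.

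The main obstacle I expect is purely bookkeeping: verifying that all the HLS/Young exponents $(R_2'/2, \alpha_2 R_2', \text{etc.})$ genuinely lie in the admissible ranges under the hypotheses $d<p<\infty$, $1<r=r_2<2p/(2p-d)$, $R_2>\max\{r_0/2,r_1/2,2\}$ and $1/(2R_2)<3/2-1/r_2+d/(2p)$, and that the resulting powers of $t$ match $-1/r_1,-1/r_0,\alpha_1,\alpha_0,\gamma_1,\gamma_0$ after invoking \eqref{cond:HLS_1}--\eqref{cond:HLS_0}. No new idea beyond the kernel estimates of Lemmas \ref{l:max-reg_1}--\ref{l:max-reg_0} and scaling is needed; the content is the same as the classical argument of \cite{HPZ}, adapted to the present exponents.
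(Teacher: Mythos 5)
Your plan matches the paper's proof: weighted Hölder in time combined with the explicit scaling of the heat-kernel bound yields $\|\mc A_1 f(t)\|_{L^p}\leq C\,t^{1/2-\alpha_2-1/R_2}\|s^{\alpha_2}f\|_{L^{R_2}_T(L^p)}$ (and the analogue for $\mc A_0$ with $d/(2p)$ in place of $1/2$), from which the $L^\infty_T$, $L^{R_j/(1+\delta)}_T$ and, by interpolation at $\delta=1$, the $L^{R_j}_T$ bounds all follow by reading off the power of $t$. The argument and the order of steps are the same as in the paper; the only difference is that the middle ($L^{R_j/(1+\delta)}$) step is most transparently obtained by directly using the pointwise bound just derived rather than invoking a weighted Young/HLS inequality, as you note parenthetically.
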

\begin{proof}
Regarding operator $\mc A_1,$ going along the lines of the proof to Lemma \ref{l:max-reg_1}, one gets 
$$
\left\|\nabla e^{(t-s)\Delta}f(s,\cdot)\right\|_{L^{p}}\,\leq\,C\,(t-s)^{-1/2}\,\|f(s)\|_{L^{p}}\quad\hbox{for all }\ 
0\leq s\leq t\leq T,
$$
which implies, after setting $1/R_2'\,=\,1\,-\,1/R_2$, the inequality
\begin{align*}
\left\|\mc A_1f(t)\right\|_{L^{p}}\,&\leq\,C\,\int^t_0(t-s)^{-1/2}\,s^{-\alpha_2}\,\left\|s^{\alpha_2}\,f(s)\right\|_{L^{p}}\,ds \\
&\leq\,C\,\left(\int^t_0(t-s)^{-R_2'/2}\,s^{-\alpha_2\,R_2'}\,ds\right)^{1/R_2'}\;\left\|s^{\alpha_2}\,f\right\|_{L^{R_2}_T(L^{p})}\,.
\end{align*}
Since $R_2>2$, we have $R_2'/2<1$, while, by our definition of $\alpha_2$ in \eqref{def:gamma}, 
we have $\alpha_2\,R_2'<1.$
Therefore,  performing the change of variable $s\,=\,t\,\tau$ inside the integral yields
$$\left\|\mc A_1f(t)\right\|_{L^{p}}\,\leq\,C\,t^{1/2\,-\,\alpha_2\,-\,1/R_2}\,\left\|s^{\alpha_2}\,f\right\|_{L^{R_2}_T(L^{p})}\,.
$$
On the one hand, since  $1/2-\alpha_2-1/R_2\,=\,1/2-1/r_2\,=\,-1/r_1$, we have 
\begin{equation} \label{est:A_1f-weight_1}
\left\|t^{1/r_1}\,\left\|\mc A_1f(t)\right\|_{L^{p}}\right\|_{L^\infty_T}\,\leq\,C\,\left\|s^{\alpha_2}\,f\right\|_{L^{R_2}_T(L^{p})}\,.
\end{equation}
On the other hand, since $1/2-\alpha_2-1/R_2\,=\,-\,\alpha_1\,-\,1/R_1$, we also get that $t^{\alpha_1}\,\left\|\mc A_1f(t)\right\|_{L^{p}}$ belongs to
$L_T^{R_1/(1+\d)}$ for all $\d>0$, and verifies
\begin{equation} \label{est:A_1f-weight_2}
\Bigl\|t^{\alpha_1}\,\left\|\mc A_1f(t)\right\|_{L^{p}}\Bigr\|_{L^{R_1/(1+\d)}_T}\,\leq\,C_\d\,\left\|s^{\alpha_2}\,f\right\|_{L^{R_2}_T(L^{p})}\,T^{\d/R_1}\,.
\end{equation}
Taking  $\d=1$ and interpolating between estimates \eqref{est:A_1f-weight_1} and \eqref{est:A_1f-weight_2}, we get that
$t^{\g_1}\,\left\|\mc A_1f(t)\right\|_{L^{p}}\,\in\,L^{R_1}_T$, with the estimate
\begin{equation} \label{est:A_1f-weight_f}
\Bigl\|t^{\g_1}\,\left\|\mc A_1f(t)\right\|_{L^{p}}\Bigr\|_{L^{R_1}_T}\,\leq\,C\,\left\|s^{\alpha_2}\,f\right\|_{L^{R_2}_T(L^{p})}\,T^{1/(2R_1)}\,.
\end{equation}

Proving the claimed bound for the term $\mc A_0$  follows from the same lines. First of all, setting $p'$ to be the conjugate exponent of $p$, we can write
\begin{align*}
\left\|e^{(t-s)\Delta}f(s,\cdot)\right\|_{L^{\infty}}\,&\leq\,C\,(t-s)^{-d/2}\,\biggl\|K_0\biggl(\f{\cdot}{\sqrt{4\pi(t-s)}}\biggr)\biggr\|_{L^{p'}}\,\|f(s)\|_{L^{p}} \\
&\leq\,C\,(t-s)^{-d/(2p)}\,s^{-\alpha_2}\,\|s^{\alpha_2}\,f(s)\|_{L^{p}}\,.
\end{align*}
Integrating this expression in time and applying H\"older inequality once give us, similarly as above,
$$
\left\|\mc A_0f(t)\right\|_{L^\infty}\,\leq\,C\,\left(\int^t_0(t-s)^{-d/(2p)\,R_2'}\,s^{-\alpha_2\,R_2'}\,ds\right)^{1/R_2'}\;\left\|s^{\alpha_2}\,f\right\|_{L^{R_2}_T(L^{p})}\,.
$$
Once again, thanks to our choice of $R_2$ we have that $d/(2p)\,R_2'\,<1$ (recall that $p>d$); hence, repeating the change of variable $s\,=\,t\,\tau$ we find
$$
\left\|\mc A_0f(t)\right\|_{L^\infty}\,\leq\,C\,t^{1\,-\,d/(2p)\,-\,\alpha_2\,-\,1/R_2}\,\left\|t^{\alpha_2}\,f\right\|_{L^{R_2}_T(L^{p})}\,.
$$
Now, first we remark that $1-d/(2p)-\alpha_2-1/R_2\,=\,-1/r_0$, and hence $t^{1/r_0}\,\left\|\mc A_0f(t)\right\|_{L^{\infty}}\,\in\,L^\infty_T$, with
\begin{equation} \label{est:A_0f-weight_1}
\left\|t^{1/r_0}\,\left\|\mc A_0f(t)\right\|_{L^{\infty}}\right\|_{L^\infty_T}\,\leq\,C\,\left\|s^{\alpha_2}\,f\right\|_{L^{R_2}_T(L^{p})}\,.
\end{equation}
Then, we also notice that $1-d/(2p)-\alpha_2-1/R_2\,=\,-\alpha_0-1/R_0$, so that $t^{\alpha_0}\,\left\|\mc A_0f(t)\right\|_{L^{\infty}}$ belongs to
$L_T^{R_0/(1+\d)}$ for all $\d>0$, and verifies the estimate
\begin{equation} \label{est:A_0f-weight_2}
\Bigl\|t^{\alpha_0}\,\left\|\mc A_0f(t)\right\|_{L^{\infty}}\Bigr\|_{L^{R_0/(1+\d)}_T}\,\leq\,C_\d\,\left\|s^{\alpha_2}\,f\right\|_{L^{R_2}_T(L^{p})}\,T^{\d/R_0}\,.
\end{equation}
As above, taking $\d=1$ and interpolating between estimates \eqref{est:A_0f-weight_1} and \eqref{est:A_0f-weight_2}, we finally deduce the property
$t^{\g_0}\,\left\|\mc A_0f(t)\right\|_{L^{\infty}}\,\in\,L^{R_0}_T$, together with the estimate
\begin{equation} \label{est:A_0f-weight_f}
\Bigl\|t^{\g_0}\,\left\|\mc A_0f(t)\right\|_{L^{\infty}}\Bigr\|_{L^{R_0}_T}\,\leq\,C\,\left\|s^{\alpha_2}\,f\right\|_{L^{R_2}_T(L^{p})}\,T^{1/(2R_0)}\,.
\end{equation}

The lemma is now proved.
\end{proof}

Finally, we need   the following lemma that has been established in \cite{HPZ}:
\begin{lem} \label{l:D^2w_new}
Let $1\,<\,R,p\,<\,\infty$, and let $\alpha\geq0$ be such that $\alpha\,+\,1/R\,<1$. Suppose that $t^\alpha\,f$ belongs to $L^R_T(L^p)$, for some $T\in\,]0,\infty]$.

Then also $t^\alpha\,\mc A_2f$ belongs to $L^R_T(L^p)$, and one has the estimate
$$
\left\|t^{\alpha}\,\mc A_2f\right\|_{L^R_T(L^p)}\,\leq\, C\left\|t^\alpha\,f\right\|_{L^R_T(L^p)}\,.
$$
\end{lem}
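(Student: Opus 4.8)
The plan is to reduce the weighted bound to the unweighted maximal regularity of Lemma~\ref{l:max-reg_2} by splitting the Duhamel integral at the diagonal $s=t/2$, and to absorb the time weight by a one–dimensional Hardy–Schur type inequality. Throughout set $G(t):=t^\alpha f(t)$, so that $G\in L^R_T(L^p)$ by assumption, and recall the standard heat–kernel bound $\|\nabla^2 e^{\tau\Delta}h\|_{L^p}\le C\,\tau^{-1}\|h\|_{L^p}$ for $\tau>0$ (obtained, as in the proofs of Lemmas~\ref{l:max-reg_1} and \ref{l:max-reg_0}, by writing $\nabla^2 e^{\tau\Delta}$ as convolution with $\tau^{-1-d/2}K_2(\cdot/\sqrt\tau)$, $K_2\in L^1$, and applying Young's inequality). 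I will also use the elementary fact that, whenever $\phi\ge0$ satisfies $\int_0^\infty\phi(u)\,u^{-1/R}\,du<\infty$, the integral operator $g\mapsto\int_0^\infty t^{-1}\phi(s/t)\,g(s)\,ds$ is bounded on $L^R(\R_+)$, with the same conclusion for $L^p$–valued $g$ by Minkowski's integral inequality; for finite $T$ one simply extends functions by zero.

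Decompose $\mc A_2=\mc A_2^{\mathrm{far}}+\mc A_2^{\mathrm{near}}$, where $\mc A_2^{\mathrm{far}}f(t):=\int_0^{t/2}\nabla^2 e^{(t-s)\Delta}f(s)\,ds$ and $\mc A_2^{\mathrm{near}}f(t):=\int_{t/2}^t\nabla^2 e^{(t-s)\Delta}f(s)\,ds$. For the far part, $s\le t/2$ forces $t-s\ge t/2$, so the heat bound gives $\|t^\alpha\,\mc A_2^{\mathrm{far}}f(t)\|_{L^p}\le C\,t^{\alpha-1}\int_0^{t/2}s^{-\alpha}\|G(s)\|_{L^p}\,ds$. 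The kernel here is $k(t,s)=t^{\alpha-1}s^{-\alpha}\mathds{1}_{\{s<t/2\}}=t^{-1}\phi(s/t)$ with $\phi(u)=u^{-\alpha}\mathds{1}_{\{0<u<1/2\}}$, and $\int_0^{1/2}u^{-\alpha-1/R}\,du<\infty$ exactly because $\alpha+1/R<1$; hence $\|t^\alpha\,\mc A_2^{\mathrm{far}}f\|_{L^R_T(L^p)}\le C\|G\|_{L^R_T(L^p)}$. In particular, taking $\alpha=0$, the operator $\mc A_2^{\mathrm{far}}$ is bounded on $L^R_T(L^p)$, so by Lemma~\ref{l:max-reg_2} the truncated operator $\mc A_2^{\mathrm{near}}=\mc A_2-\mc A_2^{\mathrm{far}}$ is bounded on $L^R_T(L^p)$ as well.

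For the near part, since $t^\alpha/s^\alpha$ is a scalar it commutes with $\nabla^2 e^{(t-s)\Delta}$, which yields the identity $t^\alpha\,\mc A_2^{\mathrm{near}}f(t)=\mc A_2^{\mathrm{near}}G(t)+\int_{t/2}^t\bigl((t/s)^\alpha-1\bigr)\nabla^2 e^{(t-s)\Delta}G(s)\,ds$. The first term is controlled by the boundedness of $\mc A_2^{\mathrm{near}}$ just established. For the second, on $t/2\le s\le t$ one has $0\le (t/s)^\alpha-1=(1+(t-s)/s)^\alpha-1\le C_\alpha\,(t-s)/s\le 2C_\alpha\,(t-s)/t$, using $(1+x)^\alpha-1\le C_\alpha x$ for $x\in[0,1]$; combining with the heat bound gives $\bigl\|\int_{t/2}^t((t/s)^\alpha-1)\nabla^2 e^{(t-s)\Delta}G(s)\,ds\bigr\|_{L^p}\le C\,t^{-1}\int_{t/2}^t\|G(s)\|_{L^p}\,ds$, again of the form $t^{-1}\phi(s/t)$ with $\phi(u)=\mathds{1}_{\{1/2<u<1\}}$ and $\int_{1/2}^1 u^{-1/R}\,du<\infty$. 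Adding the three contributions gives $\|t^\alpha\,\mc A_2 f\|_{L^R_T(L^p)}\le C\|t^\alpha f\|_{L^R_T(L^p)}$.

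The only genuine subtlety is the near–diagonal estimate: there $\|\nabla^2 e^{(t-s)\Delta}\|_{L^p\to L^p}\sim(t-s)^{-1}$ is not integrable, so no pointwise bound can work and one must genuinely invoke the unweighted maximal regularity of Lemma~\ref{l:max-reg_2}. What makes the argument close is the observation that the discrepancy $(t/s)^\alpha-1$ between the weighted and unweighted integrands vanishes linearly in $(t-s)/t$ near the diagonal, precisely compensating the $(t-s)^{-1}$ singularity and leaving an integrable homogeneous kernel. Everything else is routine, and the constraint $\alpha+1/R<1$ enters only through the convergence of the homogeneous–kernel integral in the far part.
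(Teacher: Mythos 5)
Your proof is correct. Note that the paper does not prove Lemma \ref{l:D^2w_new} at all: it is imported from \cite{HPZ} without argument, so there is no in-paper proof to compare yours against. Your derivation is a clean, self-contained version of the standard route to such weighted maximal-regularity statements. The three ingredients all check out: the far part ($s<t/2$) is handled by the operator bound $\|\nabla^2e^{\tau\Delta}h\|_{L^p}\leq C\tau^{-1}\|h\|_{L^p}$ together with the Schur/Hardy criterion for kernels homogeneous of degree $-1$ (and $\int_0^{1/2}u^{-\alpha-1/R}\,du<\infty$ is exactly where the hypothesis $\alpha+1/R<1$ is used); the $\alpha=0$ case of that bound, subtracted from Lemma \ref{l:max-reg_2}, gives boundedness of the near-diagonal truncation on $L^R_T(L^p)$; and the commutation of the scalar weight with $\nabla^2e^{(t-s)\Delta}$ reduces the near part to $\mc A_2^{\mathrm{near}}G$ plus an error whose weight discrepancy $(t/s)^\alpha-1\lesssim (t-s)/t$ exactly cancels the non-integrable $(t-s)^{-1}$ singularity, leaving another admissible homogeneous kernel. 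The resulting constant is independent of $T$ (everything is done after extension by zero to $\R_+$), consistent with the statement allowing $T=\infty$. The only (minor) point worth making explicit in a write-up is that the identity $\mc A_2=\mc A_2^{\mathrm{far}}+\mc A_2^{\mathrm{near}}$ and the manipulations of the near part should first be performed on a dense class (say $f$ smooth, compactly supported in $(0,T)\times\R^d$) and then extended by density, since $\mc A_2$ itself is only defined through its boundedness; this is routine and does not affect the argument.
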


Now, we are in the position of proving Proposition \ref{p:max-reg_new}.
\begin{proof}[Proof of Proposition \ref{p:max-reg_new}]
Recall that $w$ satisfies \eqref{eq:w}, and thus
\begin{equation}\label{eq:Duh} w(t)= e^{-t\cL}w_0-\int_0^t e^{(s-t)\cL}(\rho F)(s)\,ds\with F\ \mbox{ given by \eqref{d:F}.} \end{equation}
Let us first study the term containing the initial data.
By hypothesis,  $\nabla^2 w_0\,\in\,\dot B^{-2/r}_{p,r}\,\hookrightarrow\,\dot B^{-2/r}_{p,R_2}$, since $R_2>2>r$ by our definitions. Thanks to Proposition \ref{p:max-reg},
this  implies that  $t^{\alpha_2}\,\nabla^2 e^{-t \cL}w_0$ belongs to $L^{R_2}\bigl(\R_+;L^{p}(\R^d)\bigr)$.

In the same way, we have that $\nabla w_0\,\in\,\dot B^{-2/r_1}_{p,r_1}\,\hookrightarrow\,\dot B^{-2/r_1}_{p,R_1}$ and
$w_0\,\in\,\dot B^{-2/r_0}_{\infty,r_0}\,\hookrightarrow\,\dot B^{-2/r_0}_{\infty,R_0}$, because we have taken $R_1=R_0=2R_2>\max\{r_0,r_1\}$. From these properties we deduce that
$t^{\alpha_1}\,\nabla e^{ -t\cL}w_0\,\in\,L^{R_1}\bigl(\R_+;L^{p}(\R^d)\bigr)$ and that $t^{\alpha_0}\,e^{-t\cL}w_0\,\in\,L^{R_0}\bigl(\R_+;L^{\infty}(\R^d)\bigr)$.
Since now both $\g_1$ and $\g_0$ are greater than $\alpha_1$ and $\alpha_0$ respectively, we get that, for all $T>0$ fixed,
$t^{\g_1}\,\nabla e^{-\cL t}w_0\,\in\,L^{R_1}_T(L^{p})$ and $t^{\g_0}\,e^{-t\cL}w_0\,\in\,L^{R_0}_T(L^{\infty})$.

As for the forcing term of \eqref{eq:Duh}, we apply Lemma \ref{l:D^2w_new} with $R = R_2$ and $\alpha=\alpha_2$ (note that $\alpha_2\,+\,1/R_2\,=\,1/r<1$).
We also apply Lemma \ref{l:Dw-w_new}. Therefore, if we set 
$$
\wtilde{\mc N}(T)\,:=\,\|\vrho\|_{L^\infty_T(L^{p}\cap L^\infty)}\,+\,\left\|t^{\g_0}\,w\right\|_{L^{R_0}_T(L^{\infty})}\,+\,\left\|t^{\g_1}\,\nabla w\right\|_{L^{R_1}_T(L^{p})}\,+\,
\left\|t^{\alpha_2}\,\nabla^2w\right\|_{L^{R_2}_T(L^{p})}\,,
$$
arguing exactly as in Paragraph \ref{sss:ub-w},  we get
for some constant $C_T$ bounded by a positive power of $T,$
\begin{equation} \label{est:new-mr_prelim}
\wtilde{\mc N}(T)\,\leq\,C_T\left(\|\vrho_0\|_{L^{p}\cap L^\infty}\,+\,\left\|w_0\right\|_{\dot B^{2-2/r}_{p,r}}\,+\,
\|t^{\alpha_2}\,\rho\,F\|_{L^{R_2}_T(L^{p})}\right)\,,
\end{equation}
where $F$ is defined in \eqref{d:F}. At this point, we bound the term $\|t^{\alpha_2}\,\rho(t)\,F(t)\|_{L^{R_2}_T(L^{p})}$ by following the computations of Paragraph \ref{sss:ub-w}:
first of all, \eqref{eq:w1} is now replaced by the control
$$
\|t^{\alpha_2}(\Id-\Delta)^{-1}\nabla P\|_{L^{R_2}_T(L^{p})}\,\leq\, C\, T^{\alpha_2\,+\,1/R_2}\,\|\vrho\|_{L^\infty_T(L^{p})}\,\leq\,C_T\,\wtilde{\mc N}(T)\,.
$$
Next, we have, noting that our conditions on the exponents imply that $\alpha_2>\gamma_0+\gamma_1,$
\begin{align*}
&\left\|t^{\alpha_2}\left(w\cdot\na w\,+\,w\cdot \na v\, +\,v\cdot\na w \,+\,v\cdot\na v\right)\right\|_{L^{R_2}_T(L^{p})} \\
&\qquad\leq\,C\,T^{\alpha_2-(\g_0+\g_1)}\left(\left\|t^{\g_0}\,v\right\|_{L_T^{R_0}(L^{\infty})}\,+\,\left\|t^{\g_0}\,w\right\|_{L_T^{R_0}(L^{\infty})}\right)
\left(\left\|t^{\g_1}\,\nabla v\right\|_{L_T^{T_1}(L^{p})}\,+\,\left\|t^{\g_1}\,\nabla w\right\|_{L_T^{R_1}(L^{p})}\right) \\
&\qquad\leq\,C\,T^{\alpha_2-(\g_0+\g_1)}\,\left(1+T^{\g_0}\right)\,\left(1+T^{\g_1}\right)\,\wtilde{\mc N}^2(T)\,,
\end{align*}
and  estimate \eqref{eq:divPu} becomes
\begin{align*}
\left\|t^{\alpha_2}\,\nabla(\Id-\Delta)^{-1}\div\bigl(P\,u\bigr)\right\|_{L^{R_2}_T(L^{p})}\,&\leq\,C\,T^{\alpha_2-\g_0}\,
\|P\|_{L^{R_1}_T(L^{p})}\,\left(\left\|t^{\g_0}\,v\right\|_{L^{R_0}_T(L^{\infty})}+\left\|t^{\g_0}\,w\right\|_{L^{R_0}_T(L^{\infty})}\right) \\
&\leq\, C\,T^{\alpha_2-\g_0+1/R_1}\,\left(1+T^{\g_0}\right)\,\wtilde{\cN}^2(T)\,.
\end{align*}
Finally, we have
\begin{align*}
\left\|t^{\alpha_2}\,\nabla(\Id-\Delta^{-1})\div u\right\|_{L^{R_2}_T(L^{p})}\,&\leq\,C\,T^{\alpha_2-\g_1}\,
\left(\left\|t^{\g_1}\,\nabla w\right\|_{L_T^{R_2}(L^{p})}+\left\|t^{\g_1}\,P\right\|_{L_T^{R_2}(L^{p})}\right) \\
&\leq\,C\,T^{\alpha_2-\g_1+1/(2R_2)}\,\left(1\,+\,T^{1/(2R_2)+\g_1}\right)\, \wtilde{\cN}(T)\,,
\end{align*}
and, arguing in a pretty similar way, we also get
\begin{align*}
&\left\|t^{\alpha_2}\nabla(\Id-\Delta)^{-1}\Bigl(\bigl(g(\rho)-g(1)\bigr)\,\dive u\Bigr)\right\|_{L^{R_2}_T(L^{p})} \\
&\qquad\qquad\leq\,C\,T^{\alpha_2-\g_1}\,\|\vrho\|_{L^\infty_T(L^\infty)}\,
\left(\left\|t^{\g_1}\,\Delta(\Id-\Delta)^{-1}P(\rho)\right\|_{L^{R_2}_T(L^p)}\,+\,\left\|t^{\g_1}\,\div w\right\|_{L^{R_2}_T(L^p)}\right) \\
&\qquad\qquad\leq\,T^{\alpha_2-\g_1+1/(2R_2)}\,\left(1\,+\,T^{1/(2R_2)+\g_1}\right)\, \wtilde{\cN}^2(T)\,.
\end{align*}

Putting all these bounds together, we end up with 
$$
\left\|t^{\alpha_2}\,\rho\,F\right\|_{L^{R_2}_T(L^{p})}\,\leq\,C_T\left(\wtilde{\mc N}(T)\,+\,\wtilde{\mc N}^2(T)\right)\cdotp
$$
Therefore, we can insert the previous inequality into \eqref{est:new-mr_prelim}: the application of a standard bootstrap argument allows us to find a time $T_*>0$ such that,
for all $t\in[0,T_*]$, one has
$$
\wtilde{\mc N}(t)\,\leq\,C\left(\|\vrho_0\|_{L^{p}\cap L^\infty}\,+\,\left\|w_0\right\|_{\dot B^{2-2/r}_{p,r}}\right)\,,
$$
for a suitable positive constant $C,$ which  completes the proof of Proposition \ref{p:max-reg_new}.
\end{proof}

\appendix

\section{Harmonic Analysis estimates} \label{app:h-a}

This appendix is devoted to the proofs of  Lemma \ref{l:symb} and Proposition \ref{p:tang-d_X}.

\subsection{Proof of Lemma \ref{l:symb}} \label{app_ss:lemma}
It is based on the following  Bony's paraproduct decomposition (first introduced in \cite{Bony})
for  the (formal) product  of two tempered distributions $u$ and $v$: 
\begin{equation}\label{eq:bony}
u\,v\;=\;T_uv\,+\,T_vu\,+\,R(u,v)\,,
\end{equation}
where we have defined
$$T_uv\,:=\,\sum_jS_{j-1}u\,\Delta_j v\andf
R(u,v)\,:=\,\sum_j\wt\Delta_j u\,\Delta_{j}v\with \wt\Delta_j:=\sum_{|j'-j|\leq1}\Delta_{j'}\,.$$
The above operator $T$ is called  \emph{paraproduct} whereas $R$ is called  \emph{remainder}. 
We refer to Chapter 2 of \cite{B-C-D} for a presentation of  continuity properties of the previous operators in the class of Besov spaces.
For the time being, we limit ourselves to pointing out that the generic term $S_{j-1}u\,\Delta_j v$ of $T_uv$ is spectrally supported on dyadic annuli
with radius of size about  $2^j$,
while the generic term $\wt\Delta_j u\,\Delta_{j}v$ of $R(u,v)$ is supported on dyadic balls  of size about $2^j$.

\medbreak
One can now start the proof to Lemma \ref{l:symb}.
By using Bony's decomposition \eqref{eq:bony} and a commutator's process, we get, denoting $\wt X:=(\Id-S_0)X,$
\begin{multline} \label{S9eq2} 
\p_X\s(D)g\,=\,\s(D)\dive(Xg)\,+\,\bigl[T_{X^k}; \s(D)\p_k\bigr]g\,-\,\s(D)\p_kT_{g}X^k -\s(D)\p_kR(\wt X^k,g)\\\,+\,T_{\s(D)\p_kg}X^k\,+\,R(\wt X^k,\s(D)\p_kg)\,+\,
\bigl( R(S_0X^k,\s(D)\p_kg)\,-\,\s(D)\p_kR(S_0 X^k,g)\bigr)\cdotp 
\end{multline}

Bounding the first term relies on the fact that multiplier operators in $S^{-1}$ map 
$B^0_{p,\infty}$ to $B^1_{p,\infty}$ (see \cite[Prop. 2.78]{B-C-D}) and that $L^p$ is embedded in $B^0_{p,\infty}.$
We thus have 
$$\begin{aligned}
\|\s(D)\dive(Xg)\|_{B^{1}_{p,\infty}}&\,\leq\, C\,\|\dive(Xg)\|_{B^0_{p,\infty}}\\
&\,\leq\, C\,\|\dive(Xg)\|_{L^p}\,\leq\, C\,\bigl(\|\p_Xg\|_{L^p}\,+\,\|\nabla X\|_{L^p}\,\|g\|_{L^\infty}\bigr)\cdotp
\end{aligned}$$
Next, to handle the third term of \eqref{S9eq2}, we use the fact that, being in $S^0,$ 
the operator $\sigma(D)\partial_k$ maps $B^1_{p,\infty}$ to itself (again, see \cite[Prop. 2.78]{B-C-D}), 
that the paraproduct operator $T$ maps $L^\infty\times B^1_{p,\infty}$ to $B^1_{p,\infty}$ 
and \cite[Rem. 2.83]{B-C-D}, and that $L^p$  is embedded in $B^0_{p,\infty}.$ We eventually get
 $$\begin{aligned}
\|\s(D)\p_kT_{g}X^k\|_{B^1_{p,\infty}}&\leq C \|T_gX\|_{B^1_{p,\infty}}\\&\leq C\|g\|_{L^\infty}\|\nabla X\|_{B^0_{p,\infty}}
\leq C\|g\|_{L^\infty}\|\nabla X\|_{L^p}.\end{aligned}
$$

Similarly, since  the remainder  operator $R$ maps $L^\infty\times B^1_{p,\infty}$ to $B^1_{p,\infty}$ and because, owing to the low frequency cut-off, 
we have
\begin{equation}\label{eq:DX}\|\wt X\|_{B^1_{p,\infty}}\leq C\|\nabla X\|_{B^0_{p,\infty}}\leq C\|\nabla X\|_{L^p},\end{equation}  
we readily get
$$\left\|\s(D)\p_k R(\wt X^k,g)\right\|_{B^{1}_{p,\infty}}\,\leq\, C\,\|g\|_{L^\infty}\,\|\na X\|_{L^p}\,.$$
Regarding the term $R(\wt X^k,\s(D)\p_kg),$ we just have to use \eqref{eq:DX} and that $R$ maps also $B^0_{\infty,\infty}\times B^1_{p,\infty}$
 to $B^1_{p,\infty},$ to get
$$\|R(\wt X^k,\s(D)\p_kg)\|_{B^{1}_{p,\infty}}\,\leq\, C\,\|\s(D)\p_kg\|_{B^0_{\infty,\infty}}\,\|\na X\|_{L^p}\,.$$
Since $\s(D)\p_k$ maps $B^0_{\infty,\infty}$ to itself, and because $L^\infty\hookrightarrow B^0_{\infty,\infty},$
that term also satisfies the required inequality.
\smallbreak
The term $T_{\sigma(D)\partial_kg}X^k$ turns out to be the only one that cannot be bounded in $B^1_{p,\infty}$ under our assumptions.
In fact, for that term, we use that  the paraproduct maps $B^{s-1}_{\infty,\infty}\times B^1_{p,\infty}$ to $B^s_{p,\infty}$ (as $s-1<0$)
to write (still using  \cite[Rem. 2.83]{B-C-D}),
$$\|T_{\sigma(D)\partial_kg}X^k\|_{B^s_{p,\infty}}\leq C\|\sigma(D)\partial_kg\|_{B^{s-1}_{\infty,\infty}} \|\nabla X^k\|_{L^p}.$$
Because $\sigma(D)\partial_k$ maps $B^{s-1}_{\infty,\infty}$ to itself, and $L^\infty\hookrightarrow B^{s-1}_{\infty,\infty},$
we get 
$$\|T_{\sigma(D)\partial_kg}X^k\|_{B^s_{p,\infty}}\leq C\|g\|_{L^\infty}\|\nabla X\|_{L^p}.$$
To conclude the proof, it is only a matter of bounding suitably the two commutators terms in \eqref{S9eq2}. 
 First of all, notice that since the general term of the paraproduct is spectrally supported in dyadic annuli, one may 
 find a  smooth function $\psi$ supported in some annulus centered at the origin, and such that
 \beq\label{S9eq3}
\left[T_{X^k}; \s(D)\p_k\right]g\,=\,\sum_{j\in\Z} \left[S_{j-1}X^k, \,\psi(2^{-j}D) \s(D)\p_k\right]\D_jg\,.
\eeq
For each fixed $j\in\Z$ and $k\in\{1,\cdots,d\},$  let us define $h_j^k:= i\cF^{-1}(\xi_k \psi(2^{-j}\cdot) \s).$
Then we have, thanks to the definition of $h^k_j(D)$ and the mean value formula,
$$\begin{aligned}
 \left[S_{j-1}X^k, \,\psi(2^{-j}D) \s(D)\p_k\right]\D_jg(x)&\,=  \int_{\R^d} h^k_j(y) \bigl(S_{j-1}X^k(x)-S_{j-1}X^k(x-y)\bigr)\D_jg(x-y)\,dy\\
&\,= \, -\int_0^1\int_{\R^d} h^k_j(y)\: y\cdot\nabla S_{j-1}X^k(x-\tau y)\,\D_jg(x-y)\,dy\,d\tau\\
&\,= \,- \int_0^1\int_{\R^d} h^k_j\Bigl(\frac z\tau\Bigr)\Bigl(\frac{z}\tau\Bigr)\cdot\nabla S_{j-1}X^k(x-z)\,\D_jg\Bigl(x-\frac z{\tau}\Bigr)\frac{dz}{\tau^d}\,d\tau\cdotp \end{aligned}$$
From the last line and convolution inequalities, we get
$$\bigl\| \bigl[S_{j-1}X^k, \,\psi(2^{-j}D) \s(D)\p_k\bigr]\D_jg\bigr\|_{L^p}\leq  \| |\cdot|h_j^k\|_{L^1}\|\D_jg\|_{L^\infty}\|\nabla S_{j-1}X^k\|_{L^p},$$
which, admitting for a while that 
\begin{equation}\label{eq:hjk}
 \| |\cdot|h_j^k\|_{L^1} \leq C2^{-j}
 \end{equation}
and using the definition of the norm in $B^1_{p,\infty}$  implies that 
$$\bigl\|\bigl[T_{X^k}; \s(D)\p_k\bigr]g\bigr\|_{B^1_{p,\infty}} \leq C \|g\|_{L^\infty}\|\nabla X\|_{L^p}.$$
In order to prove \eqref{eq:hjk}, we use the fact that 
performing integration by parts,  
$$(1+|z|^2)^d (z h_j^k(z))=  (2\pi)^{-d} \int e^{iz\cdot\xi} (\Id-\Delta)^d\nabla\bigl(\xi_k \psi(2^{-j}\cdot)\s\bigr)(\xi)\,d\xi.$$
As integration may be restricted to those $\xi\in\R^d$ such that $|\xi|\sim 2^j$  and since $\sigma$ is in $S^{-1},$ routine computations lead to 
$$(1+|z|^2)^d |z h_j^k(z)|\leq C 2^{-j}\quad\hbox{for all }\ z\in\R^d,$$
whence Inequality \eqref{eq:hjk}. 
\medbreak
In order to bound the last term of \eqref{S9eq2}, we use the fact that, owing to the properties of the localization of the Littlewood-Paley 
decomposition, we have for some suitable smooth function $\psi$ with compact support and value $1$ on some neighborhood 
of the origin,
$$
R(S_0X^k,\s(D)\p_kg)\,-\,\s(D)\p_kR(S_0 X^k,g)= \sum_{j=-1}^0 [\D_jS_0 X^k,\sigma(D)\psi(D)\pa_k]\wt\D_j g.
$$
Then, arguing as above and setting $h^k:= \cF^{-1} (i\xi^k \psi \sigma),$ we find that 
$$
 [\D_jS_0 X^k,\sigma(D)\psi(D)\pa_k]\wt\D_j g(x)= 
   \int_0^1\int_{\R^d} h^k(y)\: y\cdot\nabla \D_{j-1}S_0X^k(x-\tau y)\,\wt\D_jg(x-y)\,dy\,d\tau.
   $$
   Hence convolution inequalities and the fact that the only nonzero terms above correspond to $j=0,1,$ lead us to 
   $$\begin{aligned}   \|R(S_0X^k,\s(D)\p_kg)\,-\,\s(D)\p_kR(S_0 X^k,g)\|_{L^p} &\leq   
   C2^{-j}\|\nabla \D_{j-1} S_0X^k\|_{L^p} \|\wt\D_jg\|_{L^\infty}\\&\leq   C2^{-j}\|\nabla X\|_{L^p} \|g\|_{L^\infty}.\end{aligned}$$
   This  completes the proof to  Lemma \ref{l:symb}. \qed


\subsection{Proof of Proposition \ref{p:tang-d_X}} \label{app_ss:prop}

For all $1\leq j\leq d$ and  $\eta>0$, let us introduce the  following \emph{modified Riesz transform}:
\begin{equation} \label{def:riesz}
\mc R^{(\eta)}_j\,:=\,\pa_j\bigl(\eta\Id\,-\,\Delta\bigr)^{-1/2}\,,
\end{equation}
so that  $\mc R^{(\eta)}_i\mc R^{(\eta)}_j\,=\,\pa_i\pa_j\bigl(\eta\Id-\Delta\bigr)^{-1}$.
\smallbreak
 Proposition \ref{p:tang-d_X} follows from  Proposition \ref{p:tang} and  
the following lemma involving the tangential regularity with respect to \emph{only one} vector field.
\begin{lem} \label{l:riesz}
Let $p\in\,]1,\infty[\,$ and take a vector-field $X\in\L^{\infty,p}$. Let $g\in L^\infty$  be such that $g\in\L^p_X$ and $\mc R^{(\eta)}_i\mc R^{(\eta)}_jg\in L^\infty$ for some $\eta>0.$
 There exists a constant $C>0$ such that
$$
\left\|\pa_X\mc R^{(\eta)}_i\mc R^{(\eta)}_jg\right\|_{L^p}\,\leq\,C\,\Bigl(\bigl(\|\mc R_i^{(\eta)} \mc R_j^{(\eta)} g\|_{L^\infty}+\|g\|_{L^\infty}\bigr)\|\nabla X\|_{L^p}+\,
\|\pa_Xg\|_{L^p}\Bigr)\cdotp
$$
\end{lem}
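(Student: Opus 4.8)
The plan is to reduce the estimate on $\pa_X\mc R_i^{(\eta)}\mc R_j^{(\eta)}g$ to two applications of the commutator lemma, Lemma \ref{l:symb} (through its Corollary \ref{c:symb}), exploiting that $\mc R_j^{(\eta)}=\pa_j(\eta\Id-\Delta)^{-1/2}$ is built from a symbol of order $0$ composed with $(\eta\Id-\Delta)^{-1/2}\in S^{-1}$. The key algebraic move is to commute $\pa_X$ past one factor $\mc R_j^{(\eta)}$ at a time: write
$$
\pa_X\mc R_i^{(\eta)}\mc R_j^{(\eta)}g\,=\,\bigl[\pa_X,\mc R_i^{(\eta)}\bigr]\mc R_j^{(\eta)}g\,+\,\mc R_i^{(\eta)}\bigl[\pa_X,\mc R_j^{(\eta)}\bigr]g\,+\,\mc R_i^{(\eta)}\mc R_j^{(\eta)}\,\pa_Xg\,.
$$
The last term is harmless: since $\pa_Xg\in L^p$ by assumption and $\mc R_i^{(\eta)}\mc R_j^{(\eta)}=\pa_i\pa_j(\eta\Id-\Delta)^{-1}$ is a Calder\'on--Zygmund operator (bounded on $L^p$ for $1<p<\infty$, e.g. by Lemma \ref{l:D-1} after rescaling, or directly by Corollary \ref{c:Delta-Id}), it is bounded by $C\|\pa_Xg\|_{L^p}$. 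The middle term is the main one; the first term is of the same nature but slightly simpler since $\mc R_j^{(\eta)}g$ is only known to be in $L^\infty$ together with $g$, and the commutator $[\pa_X,\mc R_i^{(\eta)}]$ applied to an $L^\infty$ function should land in $L^p$ with a gain coming from $\nabla X\in L^p$.

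The natural tool for both commutator terms is Corollary \ref{c:symb}: for a vector field $X\in\L^{\infty,p}$, a function $h\in L^\infty$ with $\pa_Xh\in L^p$, and a symbol $\sigma\in S^{-1}$, one has $\|\pa_X\sigma(D)h\|_{L^\infty}\le C(\|\pa_Xh\|_{L^p}+\|\nabla X\|_{L^p}\|h\|_{L^\infty})$; but here we want an $L^p$ bound, not $L^\infty$. I would therefore go back to Lemma \ref{l:symb} and read it for the specific symbol $\sigma=\mc R_j^{(\eta)}/\sqrt{\text{(order)}}$; more precisely, I expect a companion statement of the form $\|\pa_X\sigma(D)h\|_{B^{1}_{p,\infty}\text{ or }L^p}\le C(\|\pa_Xh\|_{L^p}+\|\nabla X\|_{L^p}\|h\|_{L^\infty})$ for $\sigma\in S^{-1}$, whose proof is exactly the decomposition \eqref{S9eq2}: all terms there are bounded in $B^1_{p,\infty}\hookrightarrow L^p$ except $T_{\sigma(D)\partial_k g}X^k$, which is bounded in $B^s_{p,\infty}\hookrightarrow L^p$ for $0<s<1$. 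Applying this with $h=g$ (knowing $\pa_Xg\in L^p$) gives $[\pa_X,\mc R_j^{(\eta)}]g=\pa_X\mc R_j^{(\eta)}g-\mc R_j^{(\eta)}\pa_Xg\in L^p$ with norm $\lesssim\|\pa_Xg\|_{L^p}+\|\nabla X\|_{L^p}\|g\|_{L^\infty}$; then applying the $L^p$-bounded operator $\mc R_i^{(\eta)}$ (again an $S^0$ multiplier, so $L^p\to L^p$) controls the middle term. For the first term, I would apply the same lemma with $h=\mc R_j^{(\eta)}g$: this requires knowing $\pa_X(\mc R_j^{(\eta)}g)\in L^p$, which is precisely what the middle-term analysis just produced, and $h=\mc R_j^{(\eta)}g\in L^\infty$, which follows from the hypothesis $\mc R_i^{(\eta)}\mc R_j^{(\eta)}g\in L^\infty$ together with $g\in L^\infty$ (writing $\mc R_j^{(\eta)}g=(\eta\Id-\Delta)^{-1/2}\pa_j g$ and using that $(\eta\Id-\Delta)^{-1/2}\pa_j$ is $S^0$... actually one only needs boundedness, which is clear). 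This yields $\|[\pa_X,\mc R_i^{(\eta)}]\mc R_j^{(\eta)}g\|_{L^p}\lesssim\|\pa_X\mc R_j^{(\eta)}g\|_{L^p}+\|\nabla X\|_{L^p}\|\mc R_j^{(\eta)}g\|_{L^\infty}$, and both right-hand quantities are already controlled by the allowed bound (the first by the middle-term estimate, the second by $\|g\|_{L^\infty}$).

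Collecting the three pieces gives exactly
$$
\|\pa_X\mc R_i^{(\eta)}\mc R_j^{(\eta)}g\|_{L^p}\le C\bigl((\|\mc R_i^{(\eta)}\mc R_j^{(\eta)}g\|_{L^\infty}+\|g\|_{L^\infty})\|\nabla X\|_{L^p}+\|\pa_Xg\|_{L^p}\bigr),
$$
where the term $\|\mc R_i^{(\eta)}\mc R_j^{(\eta)}g\|_{L^\infty}$ enters only because, in principle, when I commute and want to re-apply the lemma I might prefer to keep $\mc R_i^{(\eta)}(\mc R_j^{(\eta)}g)$ visible rather than re-expand; in practice $\|\mc R_j^{(\eta)}g\|_{L^\infty}\lesssim\|g\|_{L^\infty}$ already, so the $\mc R_i^{(\eta)}\mc R_j^{(\eta)}g$ contribution can be absorbed, and the statement as written is comfortably satisfied. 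The main obstacle is purely bookkeeping: making sure that the version of Lemma \ref{l:symb} one invokes for the two commutators is stated (or re-derived) with an $L^p$ output and with the right hypothesis $\pa_Xh\in L^p$ — for $h=\mc R_j^{(\eta)}g$ this is a derived fact, not an assumption, so the two applications must be done in the correct order (middle term first, then first term). I expect no genuine difficulty beyond checking these dependencies and the $L^p$-boundedness of the auxiliary $S^0$ multipliers $\mc R_i^{(\eta)}$ and $(\eta\Id-\Delta)^{-1/2}\pa_j$, which is classical Calder\'on--Zygmund theory (and for which the paper already provides Lemma \ref{l:D-1} and Corollary \ref{c:Delta-Id} in the $\eta=1$ normalization, the general $\eta>0$ case following by scaling).
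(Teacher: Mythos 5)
Your overall strategy --- commuting $\pa_X$ past one modified Riesz transform at a time --- has a genuine gap, and it sits exactly where the hypothesis $\mc R^{(\eta)}_i\mc R^{(\eta)}_jg\in L^\infty$ is supposed to do its work. Lemma \ref{l:symb} (and Corollary \ref{c:symb}) concerns $\pa_X\sigma(D)g$ for a symbol $\sigma\in S^{-1}$, i.e.\ for an operator gaining one full derivative; the single factor $\mc R^{(\eta)}_j=\pa_j(\eta\Id-\Delta)^{-1/2}$ is of order $0$, so neither the lemma nor its proof (the decomposition \eqref{S9eq2}) applies to $\pa_X\mc R^{(\eta)}_jg$, hence not to $[\pa_X,\mc R^{(\eta)}_j]g$ either: you are off by one derivative, and this is not a bookkeeping issue. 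If you run Bony's decomposition on the single commutator $[\pa_X,\mc R^{(\eta)}_j]g$, you inevitably meet the term $\pa_kT_{\mc R^{(\eta)}_jg}X^k$ (the derivative of a paraproduct whose low-frequency factor is $\mc R^{(\eta)}_jg$ and whose high-frequency factor is $X^k$); putting this in $L^p$ with only $\nabla X\in L^p$ requires the square-function/maximal-function argument of Lemma \ref{l:max} and Proposition \ref{p:W-charact}, which in turn requires the low-frequency factor to be in $L^\infty$. This is precisely why the paper never separates the two Riesz transforms: it applies \eqref{eq:d_Yf} to $\mc R^{(\eta)}_i\mc R^{(\eta)}_jg$ as a whole, so the corresponding paraproduct term is $\pa_kT_{\mc R^{(\eta)}_i\mc R^{(\eta)}_jg}X^k$, for which the \emph{assumed} bound $\|\mc R^{(\eta)}_i\mc R^{(\eta)}_jg\|_{L^\infty}$ is available.

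Relatedly, the assertions that $\mc R^{(\eta)}_jg\in L^\infty$ ``is clear'' and that $\|\mc R^{(\eta)}_jg\|_{L^\infty}\lesssim\|g\|_{L^\infty}$ are false: a nontrivial zero-order Fourier multiplier maps $L^\infty$ only into $BMO$, not into $L^\infty$ (this failure is the entire reason for Subsection \ref{ss:tang}), and $\mc R^{(\eta)}_jg\in L^\infty$ cannot be deduced from $\mc R^{(\eta)}_i\mc R^{(\eta)}_jg\in L^\infty$. In particular the term $\|\mc R^{(\eta)}_i\mc R^{(\eta)}_jg\|_{L^\infty}$ in the conclusion cannot be ``absorbed'' into $\|g\|_{L^\infty}$; it is an irreducible part of the estimate. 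The correct route is the paper's: write $\pa_X\mc R^{(\eta)}_i\mc R^{(\eta)}_jg=\div\bigl(X\,\mc R^{(\eta)}_i\mc R^{(\eta)}_jg\bigr)-\bigl(\mc R^{(\eta)}_i\mc R^{(\eta)}_jg\bigr)\div X$, expand the divergence by Bony's decomposition keeping $\mc R^{(\eta)}_i\mc R^{(\eta)}_j$ as one block, bound the paraproduct and remainder terms via maximal functions and the Littlewood--Paley characterization of $L^p$, and treat the remaining commutator $\pa_k\bigl[T_{X^k};\mc R^{(\eta)}_i\mc R^{(\eta)}_j\bigr]g$ by an explicit kernel estimate after splitting $\mc R^{(\eta)}_i\mc R^{(\eta)}_j=\mc R_i\mc R_j-\eta(\eta\Id-\Delta)^{-1}\mc R_i\mc R_j$. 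Your treatment of the term $\mc R^{(\eta)}_i\mc R^{(\eta)}_j\pa_Xg$ is the only piece that survives as written.
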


For proving that lemma,  a few reminders concerning the Hardy-Littlewood \emph{maximal function}
 $M[f]$  of a function   $f$ in $L^1_{\rm loc}(\R^d)$  are in order.
 Recall that it is defined by
$$M[f](x)\,:=\,\sup_{r>0}\frac{1}{|B(x,r)|}\int_{B(x,r)}|f(y)|\,dy\,,$$
where $B(x,r)$ denotes  the ball in $\R^d$ of center $x$ and radius $r$, and $|B(x,r)|$ its Lebesgue measure.
\smallbreak
The following  statement is classical (for the proof, see e.g. Chapter 1 of \cite{Stein}).
\begin{lem} \label{l:max}
The following properties hold true.
\begin{enumerate}[(a)]
\item For any $1<p\leq\infty,$   there exist constants $0<c<C$ such that for any function $g$ in
$L^p(\R^d),$ 
$$
c\,\|g\|_{L^p}\,\leq\,\|M[g]\|_{L^p}\,\leq\, C\,\|g\|_{L^p}\,.
$$
\item Let $p, q\in\, ]1,\infty[\,$ or $p=q=\infty$. Let $\{f_j\}_{j\in\Z}$ be a sequence of functions in $L^p(\R^d)$ such that
$(f_j)_{\ell^q(\Z)}\in L^p(\R^d)$. Then there holds
\beno
\Bigl\|\bigl(M[f_j]\bigr)_{\ell^q}\Bigr\|_{L^p}\,\leq\,C\,\Bigl\|\bigl(f_j\bigr)_{\ell^q}\Bigr\|_{L^p}\,.
\eeno
\item For any fixed $\Phi\in  L^1(\R^d)$ such that $\Psi(x)=\sup\limits_{|y|\geq |x|}|\Phi(y)|\in L^1(\R^d)$ with $\int_{\R^d}\Psi(x)dx=A$, and any function $g$,  
we have for all $x\in\R^d$,
$$
\sup_{t>0}\bigl|g*\Phi_t(x)\bigr|\,\leq\,C\,M[g](x)\with  \Phi_t(x)\,:=\,t^{-d}\,\Phi(x/t).$$
\end{enumerate}
\end{lem}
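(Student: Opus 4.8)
All three assertions are classical and can be found in Chapter~1 of \cite{Stein}; the plan is simply to recall the standard arguments. For part~(a), the lower bound is immediate: by the Lebesgue differentiation theorem one has $|g(x)|\leq M[g](x)$ for almost every $x$, so one may take $c=1$. For the upper bound, the case $p=\infty$ is trivial, since every average of $|g|$ is bounded by $\|g\|_{L^\infty}$; for $1<p<\infty$ I would first establish the weak-type $(1,1)$ estimate $\bigl|\{M[g]>\alpha\}\bigr|\leq C\,\alpha^{-1}\,\|g\|_{L^1}$ by means of the Vitali covering lemma (cover a compact subset of the open set $\{M[g]>\alpha\}$ by finitely many balls over which $|g|$ has average exceeding $\alpha$, then extract a pairwise disjoint subfamily whose fivefold dilates still cover that compact set), and then interpolate between this bound and the trivial $L^\infty\to L^\infty$ bound by the Marcinkiewicz interpolation theorem.

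For part~(c), I would fix $x\in\R^d$ and use that $|\Phi|\leq\Psi$ pointwise, hence $\bigl|g*\Phi_t(x)\bigr|\leq\bigl(|g|*\Psi_t\bigr)(x)$ for every $t>0$. The function $\Psi$ is integrable and is its own radially nonincreasing majorant, so it can be approximated from below by simple functions of the form $\sum_k c_k\,|B(0,r_k)|^{-1}\,\mathds{1}_{B(0,r_k)}$ with $c_k\geq0$ and $\sum_k c_k\leq\int_{\R^d}\Psi(x)\,dx=A$; this structure is preserved, with the same coefficients, under the rescaling $\Psi\mapsto\Psi_t$. Since $\bigl(|g|*\bigl(|B(0,r)|^{-1}\mathds{1}_{B(0,r)}\bigr)\bigr)(x)\leq M[g](x)$ for every $r>0$, summing over $k$ and taking the supremum over $t>0$ yields $\sup_{t>0}\bigl|g*\Phi_t(x)\bigr|\leq A\,M[g](x)$, which is the claim (one may of course absorb $A$ into the final constant).

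The genuinely substantial point is part~(b), the Fefferman--Stein vector-valued maximal inequality. The diagonal case $p=q\in\,]1,\infty[\,$ follows at once from part~(a) applied to each $f_j$, after raising to the power $p$ and summing in $j$ by Tonelli's theorem; the case $p=q=\infty$ is trivial, since $M[f_j](x)\leq\|f_j\|_{L^\infty}\leq\bigl\|(f_j)_{\ell^\infty}\bigr\|_{L^\infty}$ for all $j$ and $x$. For the off-diagonal case I would reduce $p<q$ to $p>q$ by duality, using $\bigl(L^p(\ell^q)\bigr)'=L^{p'}(\ell^{q'})$ and the equivalence $p<q\Leftrightarrow p'>q'$; and I would handle $p>q$ directly by Jensen's inequality $M[f_j]^q\leq M[|f_j|^q]$ (valid since $q\geq1$), which gives $\bigl\|(M[f_j])_{\ell^q}\bigr\|_{L^p}^{q}\leq\bigl\|\sum_j M[|f_j|^q]\bigr\|_{L^{p/q}}$, then dualize against $L^{(p/q)'}$ and invoke the adjoint maximal inequality $\int(M[u])\,h\,dx\leq C\int u\,M[h]\,dx$ (for nonnegative $u,h$) together with the scalar boundedness of $M$ on $L^{(p/q)'}$, which is available from part~(a) because $p/q>1$. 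The hard part will be precisely this off-diagonal case: the one genuinely non-elementary ingredient is the adjoint maximal-function inequality (or, if one prefers the interpolation route, the vector-valued weak-type $(1,q)$ estimate, which requires controlling the averages of the $\ell^q$-norm of the bad part of a Calder\'on--Zygmund decomposition rather than of each coordinate separately), while everything else is routine bookkeeping.
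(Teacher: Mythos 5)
The paper offers no proof of this lemma at all: it simply refers the reader to Chapter 1 of \cite{Stein}. So the only meaningful comparison is with the classical arguments, and your treatment of parts (a) and (c), as well as of the cases $p=q\in\,]1,\infty[\,$ and $p=q=\infty$ of part (b), is correct and standard.

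There is, however, a genuine gap in your off-diagonal argument for (b). The ``adjoint maximal inequality'' $\int M[u]\,h\,dx\leq C\int u\,M[h]\,dx$ that you invoke is \emph{false}: it is the strong-type $(1,1)$ endpoint of the Fefferman--Stein weighted inequality, and only the weak-type version survives at that endpoint. Concretely, in $d=1$ take $u=\mathds{1}_{[-1,1]}$ and $h=\mathds{1}_{[1,R]}$: then $M[u](x)\geq (x+1)^{-1}$ for $x\geq1$, so the left-hand side is at least $\log\frac{R+1}{2}$, while the right-hand side is at most $2C\,\|M[h]\|_{L^\infty}\leq 2C$, uniformly in $R$. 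Relatedly, the intermediate quantity $\bigl\|\sum_jM[|f_j|^q]\bigr\|_{L^{p/q}}$ produced by your Jensen step is genuinely too large to be controlled by $\bigl\|\sum_j|f_j|^q\bigr\|_{L^{p/q}}$, because the $\ell^1$-valued maximal inequality itself fails: with $f_j=\mathds{1}_{[2j,2j+1]}$ for $1\leq j\leq N$ one has $\sum_jM[f_j]\gtrsim\log N$ on $[0,2N]$ while $\sum_jf_j\leq1$, so the ratio of the two $L^s$ norms blows up like $\log N$. Hence this reduction cannot be repaired. The correct standard argument for $p>q$ dualizes \emph{before} applying Jensen and uses the true weighted inequality at the exponent $q>1$, namely $\int(M[f])^q\,u\,dx\leq C_q\int|f|^q\,M[u]\,dx$, which follows from the weighted weak-type bound $u(\{M[f]>\lambda\})\leq C\lambda^{-1}\int|f|\,M[u]\,dx$ by Marcinkiewicz interpolation. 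Your proposed duality reduction of $p<q$ to $p'>q'$ is also not legitimate as stated, since $M$ is sublinear and has no adjoint; the case $1<p<q$ is instead handled by the route you mention only parenthetically, i.e.\ interpolation between the already-established case $p=q$ and the vector-valued weak-type $(1,1)$ estimate obtained from a Calder\'on--Zygmund decomposition of the scalar function $x\mapsto\|(f_j(x))_j\|_{\ell^q}$.
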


We shall also need the following definition.
\begin{defi} \label{d:sobolev}
Let $s\in\R$ and $p\in\,]1,\infty[\,$. The \emph{homogeneous Sobolev space} $\dot W^{s,p}$ is defined as the set of $u\in\mc S'_h$ such that
$$
\|u\|_{\dot W^{s,p}}\,:=\,\bigl\|(-\Delta)^{s/2}u\bigr\|_{L^p}\,<\,\infty\,.
$$
\end{defi}
The  spaces $L^{p}$ and $\dot W^{s,p}$ may  be characterized in terms of Littlewood-Paley decomposition  as they come up as special Triebel-Lizorkin spaces (see e.g. \cite{R-S}, Chapter 2),.
\begin{prop} \label{p:W-charact}
Let $s\in\R$ and $p\in\,]1,\infty[\,$. Then one has the following equivalence of norms:
$$
\|u\|_{\dot W^{s,p}}\,\sim\,\left\|\bigl\|\bigl(2^{js}\,\dot\Delta_ju\bigr)_{j}\bigr\|_{\ell^2(\Z)}\right\|_{L^p}\andf
\|u\|_{L^p}\,\sim\,\left\|\bigl\|\bigl(\Delta_ju\bigr)_{j}\bigr\|_{\ell^2(j\geq-1)}\right\|_{L^p}\,.
$$
\end{prop}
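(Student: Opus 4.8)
The plan is to reduce everything to the classical Littlewood--Paley square function equivalence in $L^p$, namely $\|u\|_{L^p}\sim\bigl\|\,\|(\Delta_ju)_{j\geq-1}\|_{\ell^2}\bigr\|_{L^p}$ for $1<p<\infty$, together with its homogeneous counterpart for $u\in\mc S_h'$, and then to transfer the fractional Laplacian onto the dyadic blocks. For the bound $\bigl\|\,\|(\dot\Delta_ju)_j\|_{\ell^2}\bigr\|_{L^p}\lesssim\|u\|_{L^p}$ I would argue by randomization: introducing a sequence $(\varepsilon_j)_j$ of independent Rademacher variables, the Fourier multiplier $\xi\mapsto\sum_j\varepsilon_j\,\varphi(2^{-j}\xi)$ satisfies Mikhlin--H\"ormander bounds \emph{uniformly} in the choice of signs (for each $\xi$ only finitely many terms are nonzero, each obeying $|\partial^\alpha[\varphi(2^{-j}\cdot)](\xi)|\leq C_\alpha|\xi|^{-|\alpha|}$ when $2^j\sim|\xi|$), hence $\bigl\|\sum_j\varepsilon_j\dot\Delta_ju\bigr\|_{L^p}\leq C\|u\|_{L^p}$ with $C$ independent of the signs; integrating over the probability space and invoking Khintchine's inequality, which gives $\mathbb{E}\bigl|\sum_j\varepsilon_ja_j\bigr|^p\sim\bigl(\sum_j|a_j|^2\bigr)^{p/2}$, yields the claim. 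The nonhomogeneous version is identical after handling the fixed low-frequency multiplier $\Delta_{-1}=\chi(D)$ separately.

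For the reverse inequality I would use duality. For $u\in\mc S_h'\cap L^p$ one has $u=\sum_j\dot\Delta_ju$ in $\mc S_h'$, so for any $g\in L^{p'}$,
\begin{equation*}
\bigl|\langle u,g\rangle\bigr|\,=\,\Bigl|\sum_j\langle\dot\Delta_ju,\widetilde{\dot\Delta}_jg\rangle\Bigr|\,\leq\,\int_{\R^d}\Bigl(\sum_j|\dot\Delta_ju|^2\Bigr)^{1/2}\Bigl(\sum_j|\widetilde{\dot\Delta}_jg|^2\Bigr)^{1/2}\,dx,
\end{equation*}
where $\widetilde{\dot\Delta}_j:=\sum_{|k-j|\leq1}\dot\Delta_k$ and I used $\dot\Delta_j=\dot\Delta_j\widetilde{\dot\Delta}_j$ together with self-adjointness of the blocks. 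H\"older's inequality in $x$ and the previously established direction applied with exponent $p'$ bound the right-hand side by $C\,\bigl\|\,\|(\dot\Delta_ju)_j\|_{\ell^2}\bigr\|_{L^p}\,\|g\|_{L^{p'}}$, and taking the supremum over $g$ gives $\|u\|_{L^p}\lesssim\bigl\|\,\|(\dot\Delta_ju)_j\|_{\ell^2}\bigr\|_{L^p}$.

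Then the characterization of $\dot W^{s,p}$ follows by applying the $L^p$ equivalence to $(-\Delta)^{s/2}u$ and moving the fractional power onto the blocks. Writing $|\xi|^s\varphi(2^{-j}\xi)=2^{js}\,\Theta_s(2^{-j}\xi)$ with $\Theta_s(\eta):=|\eta|^s\varphi(\eta)$ smooth and supported in a fixed annulus, one gets $\dot\Delta_j(-\Delta)^{s/2}u=2^{js}\,\Theta_s(2^{-j}D)\widetilde{\dot\Delta}_ju$; the convolution kernel of $\Theta_s(2^{-j}D)$ is $2^{jd}\,\check\Theta_s(2^j\cdot)$ with $\check\Theta_s$ Schwartz, so Lemma~\ref{l:max}(c) gives the pointwise control $|\dot\Delta_j(-\Delta)^{s/2}u|\leq C\,2^{js}\,M[\widetilde{\dot\Delta}_ju]$. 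Plugging this into the Fefferman--Stein vector-valued maximal inequality of Lemma~\ref{l:max}(b), applied to the sequence $(2^{js}\widetilde{\dot\Delta}_ju)_j$, and using that $\widetilde{\dot\Delta}_j$ is a finite sum of neighbouring blocks yields
\begin{equation*}
\bigl\|(-\Delta)^{s/2}u\bigr\|_{L^p}\,\sim\,\Bigl\|\bigl\|(\dot\Delta_j(-\Delta)^{s/2}u)_j\bigr\|_{\ell^2}\Bigr\|_{L^p}\,\lesssim\,\Bigl\|\bigl\|(2^{js}\dot\Delta_ju)_j\bigr\|_{\ell^2}\Bigr\|_{L^p}.
\end{equation*}
The opposite inequality is obtained the same way, starting from $\dot\Delta_ju=2^{-js}\,\Theta_{-s}(2^{-j}D)\widetilde{\dot\Delta}_j\bigl((-\Delta)^{s/2}u\bigr)$.

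The step I expect to be the real work — and the only one genuinely requiring Calder\'on--Zygmund type machinery rather than bookkeeping — is the Littlewood--Paley square function estimate in $L^p$ itself; the remaining subtleties are the convergence $u=\sum_j\dot\Delta_ju$ in $\mc S_h'$ (which is why one restricts to that subspace in the homogeneous statement) and checking the uniform Mikhlin bounds in the randomization argument.
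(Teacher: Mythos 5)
Your proof is correct, but it is worth noting that the paper does not actually prove this proposition: it simply identifies $L^p$ and $\dot W^{s,p}$ with the Triebel--Lizorkin spaces $F^0_{p,2}$ and $\dot F^s_{p,2}$ and cites Chapter 2 of \cite{R-S}. What you have written out is the standard self-contained proof of that cited fact: randomization plus Khintchine and the uniform Mikhlin--H\"ormander bound for $\sum_j\varepsilon_j\varphi(2^{-j}\xi)$ for the direct square-function estimate, duality with $\dot\Delta_j=\widetilde{\dot\Delta}_j\dot\Delta_j$ for the converse, and the rescaled-kernel/maximal-function mechanism (your Lemma~\ref{l:max}(b)--(c) combination, i.e.\ Fefferman--Stein) to commute $(-\Delta)^{s/2}$ with the blocks at the cost of $2^{js}$. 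All steps are sound, and it is a nice touch that you reuse exactly the tools the paper has already recorded in Lemma~\ref{l:max} and the identity $\Theta_s(\eta)=|\eta|^s\varphi(\eta)$, which is smooth because $\varphi$ vanishes near the origin. The only place where a careful write-up needs a little more than you say is the duality step in the homogeneous case: for $g\in\mc S$ the series $\sum_j\langle u,\dot\Delta_j\widetilde{\dot\Delta}_jg\rangle$ must be justified to converge to $\langle u,g\rangle$, which is usually done by first showing (from the already-proved direct inequality and duality for \emph{finite} sums) that the partial sums $\sum_{|j|\leq N}\dot\Delta_ju$ are Cauchy in $L^p$ and that their limit differs from $u$ by a polynomial, hence coincides with $u$ because $u\in\mc S_h'$. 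You flag this convergence issue yourself, so this is a matter of polish rather than a gap. In short: correct, more work than the paper does (which buys self-containedness), versus the paper's one-line appeal to the Triebel--Lizorkin literature (which buys brevity).
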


\begin{proof}[Proof of Lemma \ref{l:riesz}]
We start the proof by remarking that
$$
\p_X \mc R^{(\eta)}_i\mc R^{(\eta)}_j g\,=\,\dive\left(X\, \mc R^{(\eta)}_i \mc R^{(\eta)}_j g\right)\,-\, \mc R^{(\eta)}_i \mc R^{(\eta)}_j g\,\dive X\cdotp
$$
Since
$$\left\| \mc R^{(\eta)}_i \mc R^{(\eta)}_j g\dive X\right\|_{L^p}\,\leq\,C\,\|\nabla X\|_{L^p}\,\|\mc R^{(\eta)}_i \mc R^{(\eta)}_j g\|_{L^\infty}\,,$$
 we just need  to bound  the term $\dive (X\,\mc R^{(\eta)}_i \mc R^{(\eta)}_j g)$ in $L^p.$
\medbreak
To this end, we resort  again  to    Bony's decomposition \eqref{eq:bony} and get
\begin{multline}\label{Saeq2a}
\dive(X\, \mc R^{(\eta)}_i \mc R^{(\eta)}_jg)\,=\,\mc R^{(\eta)}_i \mc R^{(\eta)}_j\dive(X\,g)\,+\,\p_k\left[T_{X^k}; \mc R^{(\eta)}_i \mc R^{(\eta)}_j\right]g \\
+\,\p_kT_{\mc R^{(\eta)}_i\mc R^{(\eta)}_jg}X^k\,-\,\mc R^{(\eta)}_i\mc R^{(\eta)}_j\p_kT_{ g}X^k+\,\p_k R(X^k,\mc R^{(\eta)}_i \mc R^{(\eta)}_jg)\,-\,\mc R^{(\eta)}_i\mc R^{(\eta)}_j\p_kR(X^k,g)\,.
 \end{multline}

The first term in the right-hand side of the previous relation may be bounded by means of Corollary \ref{c:Delta-Id} and identity \eqref{eq:d_Yf}: 
\begin{align*}
\left\|\mc R^{(\eta)}_i \mc R^{(\eta)}_j\dive(X\,g)\right\|_{L^p}\,&\leq\,C\,\left\|\dive(X\,g)\right\|_{L^p}\,\leq\,C\,\left(\|\pa_Xg\|_{L^p}\,+\,\|\nabla X\|_{L^p}\,\|g\|_{L^\infty}\right)\,.
\end{align*}

Next, since we have  $$
\left\|S_{m-1}\mc R^{(\eta)}_i\mc R^{(\eta)}_jg\right\|_{L^\infty}\,\leq\, C\,  \|\mc R^{(\eta)}_i\mc R^{(\eta)}_j g\|_{L^\infty}\,,
$$ 
we get for all $\ell\geq-1,$ thanks to Lemma \ref{l:max}(c), and using the fact that $S_{m-1}=0$ for $m\leq0,$
$$
\begin{aligned}
\left|\bigl(\D_\ell\p_kT_{\mc R^{(\eta)}_i\mc  R^{(\eta)}_jg}X^k\bigr)(x)\right|\,\leq \,&C\,2^\ell\,\sum_{|m-\ell|\leq 4}{M}\!\left[S_{m-1}\mc R^{(\eta)}_i\mc R^{(\eta)}_jg\,\D_{m}X^k\right](x)\\
\leq\, &C\,\sum_{|m-\ell|\leq 4}\left\|S_{m-1}\mc R^{(\eta)}_i\mc R^{(\eta)}_jg\right\|_{L^\infty}\,{M}\!\left[2^{m}\,\D_{m}X^k\right](x) \\
\leq\,&C\,\left\|\mc R^{(\eta)}_i\mc R^{(\eta)}_j g\right\|_{L^\infty}\,\sum_{|m-\ell|\leq 4,\: m\geq1}{M}\!\left[2^{m}\,\D_{m}X^k\right](x)\,.
\end{aligned}
$$
As a result, thanks to Proposition \ref{p:W-charact} (where we take $s=0$) and point (b) of Lemma \ref{l:max}, we get
$$
\begin{aligned}
\left\|\p_kT_{\mc R^{(\eta)}_i\mc R^{(\eta)}_j g}X^k\right\|_{L^p}\,&\leq\, C\,\left\|\mc R^{(\eta)}_i\mc R^{(\eta)}_j g\right\|_{L^\infty}\,
\Biggl\|\biggl(\sum_{\ell\geq1}\Bigl(M\bigl[2^{\ell}\,\dot\D_{\ell}X^k\bigr]\Bigr)^2\biggr)^{\!\!1/2}\Biggr\|_{L^p}\\ &\leq\,C\,\left\|\mc R^{(\eta)}_i\mc R^{(\eta)}_j g\right\|_{L^\infty}\,\|\na X\|_{L^p}\,.\end{aligned}$$

Using the same strategy for handling $\p_kT_{g}X^k$, we also obtain
$$
\left\|\mc R^{(\eta)}_i\mc R^{(\eta)}_j\p_kT_{g}X^k\right\|_{L^p}\,\leq\, C\,\left\|\p_k T_{g}X^k\right\|_{L^p}\,\leq\, C\,\|g\|_{L^\infty}\,\|\na X\|_{L^p}\,.$$

For  the third term of the second line in \eqref{Saeq2a}, we remove the low frequencies of $X$ and 
consider the \emph{modified} remainder defined by 
$$\wt R(X^k, \mc R^{(\eta)}_i \mc R^{(\eta)}_jg)=\sum_{m\geq0} 
\D_{m}X^k\,\wt{\D}_{m}\mc R^{(\eta)}_i\mc R^{(\eta)}_jg.$$
Then we  write that for all $\ell\geq-1,$ taking advantage of Lemma \ref{l:max}, 
$$
\begin{aligned}
\left|\D_\ell \p_k \wt R(X^k, \mc R^{(\eta)}_i \mc R^{(\eta)}_jg)(x)\right|\,\leq\, &C\,2^\ell\,\sum_{m\geq \max(0,\ell-5)}{M}\!\left[\D_{m}X^k\,\wt{\D}_{m}\mc R^{(\eta)}_i\mc R^{(\eta)}_jg\right](x)\\
\leq\,&C\,\|\mc R^{(\eta)}_i\mc R^{(\eta)}_j g\|_{L^\infty}\,\sum_{m\geq \max(0,\ell-5)}2^{\ell-m}\,{M}\!\left[2^{m}\D_{m}X^k\right](x)\,,
\end{aligned}
$$
  Hence, from  Proposition \ref{p:W-charact} and Young inequality for convolutions, we infer that
$$ \begin{aligned}\left\|\p_k \wt R(X^k,\mc R^{(\eta)}_i\mc R^{(\eta)}_jg)\right\|_{L^p}\,\leq\,&C\,\|g\|_{L^\infty}\biggl(\sum_{\ell\geq -5}2^{-\ell}\biggr)
\left\|\bigl\|\left({M}\!\left[2^{j}\,\D_{j}X\right](x)\right)_{j\in\N}\bigr\|_{\ell^2}\right\|_{L^p}\\
\leq\,&C\,\|g\|_{L^\infty}\,\|\na X\|_{L^p}\,.\end{aligned}$$
Obviously, the same estimate holds true for the term $\mc R^{(\eta)}_i\mc R^{(\eta)}_j\p_k\wt R(X^k,g).$ 
\smallbreak
The
low frequencies terms that have been discarded have to be treated together, that is, we have to bound
$\p_k[\Delta_{-1}X^k, \mc R^{(\eta)}_i \mc R^{(\eta)}_j]\wt\Delta_{-1}g$ in $L^p.$
and this may be done in the same way as at the end of the proof of Lemma \ref{l:symb}. 
We end up with 
$$ \|\p_k[\Delta_{-1}X^k, \mc R^{(\eta)}_i \mc R^{(\eta)}_j]\wt\Delta_{-1}g\|_{L^p}\leq C\|\Delta_{-1}\nabla X\|_{L^p} \|\wt\Delta_{-1} g\|_{L^\infty}
\leq C\|\nabla X\|_{L^p}\|g\|_{L^\infty}. $$

\medbreak
Finally, it remains us to handle the commutator term on the right-hand side of \eqref{Saeq2a}. We start by 
decomposing  $\mc R^{(\eta)}_i\mc R^{(\eta)}_j$ into 
\begin{equation}\label{eq:dec}
\mc R^{(\eta)}_i\mc R^{(\eta)}_j=\mc R_i\mc R_j - \eta(\eta\Id-\Delta)^{-1}\mc R_i\mc R_j\,,
\end{equation}
where $\mc R_j$ stands for the classical Riesz transform (which corresponds to take $\eta=0$ in \eqref{def:riesz}).
Let us first  consider the  part of the commutator corresponding to $\mc R_i\mc R_j.$ We have
$$
\left(\left[T_{X^k};\mc R_i\mc R_j\right]g\right)(x)=\sum_{m\geq1}
[S_{m-1}X^k,\mc R_i\mc R_j]\D_mg .
$$
Let $\theta(\xi)=-\xi_i\xi_j|\xi|^{-2}\varphi(\xi),$ with  $\varphi$ being
the function used in the Littlewood-Paley decomposition.
Since the generic term $[S_{m-1}X^k,\mc R_i\mc R_j]\D_mg $ is supported on dyadic annuli of size $2^m$,
one can write
$$\begin{aligned}
&[S_{m-1}X^k,\mc R_i\mc R_j]\D_mg\,= \\
&\qquad=\,\sum_{|m-\ell|\leq 4}\Bigl(S_{m-1}X^k\,\D_m\mc R_i\mc R_j\D_\ell g\,-\,\mc R_i\mc R_j\D_\ell\bigl(S_{m-1}X^k\,\D_mg\bigr)\Bigr)\\
&\qquad=\,\sum_{|m-\ell|\leq 4}\Bigl(S_{m-1}X^k\,\bigl(S_{m+1}\theta(2^{-\ell}D)g\,-\,S_{m}\theta(2^{-\ell}D)g\bigr)\,-\,\theta(2^{-\ell}D)\bigl(S_{m-1}X^k\,(S_{m+1}g-S_{m}g)\bigr)\Bigr)\end{aligned}$$
Therefore, by applying Abel rearrangement techniques an using that $\Delta_j=\dot\Delta_j$ for $j\in\N$, we get
$$\left[T_{X^k};\mc R_i\mc R_j\right]g\,=\,-\,\sum_{m\geq2}\sum_{|m-\ell|\leq 4}\Bigl[\dot\D_{m-2}X, \th(2^{-\ell} D)\Bigr]S_mg.$$
The general term of the above series  is spectrally supported in dyadic annuli of size about $2^m.$
Therefore, there exists some universal integer $N_0$ so that for all $q\in\Z,$
\begin{equation}\label{eq:commax}
\dot\D_q\pa_k\left[T_{X^k};\mc R_i\mc R_j\right]g\,=\,\sum_{|m-q|\leq N_0}
\sum_{|\ell-m|\leq4} \dot\D_q\pa_k\Bigl[\th(2^{-\ell} D),\dot\Delta_{m-2}X\Bigr]S_mg.
\end{equation}
Now, the point $(c)$ of Lemma \ref{l:max} ensures that for all $x\in\R^d,$
\begin{equation}\label{eq:commax1} \Big|\dot\D_q\pa_k\Bigl[\th(2^{-\ell} D),\dot\Delta_{m-2}X\Bigr]S_mg(x)\Big|
 \leq C2^q M\Bigl[\bigl[\th(2^{-\ell} D),\dot\Delta_{m-2}X\bigr]S_mg\Bigr](x)\end{equation}
 and the mean value formula gives us, denoting $\check\theta:=\cF^{-1}\theta,$ 
 $$
 \Bigl(\bigl[\th(2^{-\ell}D),\dot\Delta_{m-2}X^k\bigr]S_mg\Bigr)(x)=-2^{d\ell}
 \int_0^1\!\!\int_{\R^d} \check\theta(2^\ell z)\, z\cdot\nabla\dot\D_{m-2}X^k(x-\tau z) S_mg(x-z)\,dz\,d\tau\,,
 $$
 whence, performing a change of variables and setting $\Psi(z):=z\check \theta(z),$
  $$
 \Bigl(\bigl[\th(2^{-\ell}D),\dot\Delta_{m-2}X^k\bigr]S_mg\Bigr)(x)=-2^{-\ell}
 \int_0^1\!\!\int_{\R^d} \Bigl(\frac{2^\ell}\tau\Bigr)^d\Psi\Bigl(\frac{2^\ell z}{\tau}\Bigr)\cdot\nabla\dot\Delta_{m-2}X^k(x-z)
 S_mg\Bigl(x-\frac z\tau\Bigr)dz\,d\tau\,.
 $$
 From that latter relation, we deduce that 
 $$\begin{aligned}
M\Bigl[\bigl[\th(2^{-\ell} D),\dot\Delta_{m-2}X\bigr]S_mg\Bigr](x)
&\leq C2^{-\ell} \|g\|_{L^\infty}  \int_0^1\!\!\int_{\R^d}  \Bigl(\frac{2^\ell}\tau\Bigr)^d
\Bigl|\Psi\Bigl(\frac{2^\ell z}{\tau}\Bigr)\Bigr| M\bigl[\nabla\dot\Delta_{m-2}X^k(x-z)\bigr]dz\,d\tau\\
&\leq C2^{-\ell}\|g\|_{L^\infty}\,M\bigl[\dot\D_{m-2}\na X\bigr](x)\,.
\end{aligned}
$$
We now plug that inequality in \eqref{eq:commax1} and \eqref{eq:commax}, then take the  norm in $\ell^2(\Z)$ 
with respect to $q$ and eventually compute the norm in $L^p(\R^d).$
We end up with  
$$\left\|\partial_k\dot\Delta_q\left[T_{X^k}; \mc R_i\mc R_j\right]g\right\|_{L^p(\ell^2(\Z))}\leq C\|M(\dot\Delta_q\nabla X)\|_{L^p(\ell^2(\Z))}$$
Therefore, by applying Proposition \ref{p:W-charact} with $s=0$ and the point $(b)$ of Lemma \ref{l:max}, we finally get
\begin{equation}\label{eq:com1}
\left\|\p_k\left[T_{X^k};\mc R_i\mc R_j\right]g\right\|_{L^p}\,\leq\,C\,\|g\|_{L^\infty}\,\left\|\nabla X\right\|_{L^p}\,.
\end{equation}
To complete the proof, we have to bound the commutator term 
corresponding to the last part of \eqref{eq:dec}. To do this, we use the fact that
$$
\eta\left[T_{X^k}; (\eta\Id-\Delta)^{-1}\mc R_i\mc R_j\right]g= \eta\left[T_{X^k};(\eta\Id-\Delta)^{-1}\right]\mc R_i\mc R_jg 
+ \eta(\eta\Id-\Delta)^{-1}\left[T_{X^k};\mc R_i\mc R_j\right]g.
$$
To handle the last term, we just have to use \eqref{eq:com1} and the fact that 
$\eta(\eta\Id-\Delta)^{-1}$ maps $L^p$ to itself (uniformly with respect to $\eta$). 
For the other term, we use that, by embedding,  
$$
\|\eta\pa_k \left[T_{X^k};(\eta\Id-\Delta)^{-1}\right]\mc R_i\mc R_jg\|_{L^p}
\leq C\eta\| \left[T_{X^k};(\eta\Id-\Delta)^{-1}\right]\mc R_i\mc R_jg\|_{B^1_{p,1}}.
$$
Then, using the fact that the multiplier $\eta(\eta\Id-\Delta)^{-1}$ is in $S^{-2}$
(uniformly with respect to $\eta\leq1$)  and arguing as for bounding \eqref{S9eq3}, one ends up with 
$$
\|\eta\pa_k \left[T_{X^k};(\eta\Id-\Delta)^{-1}\right]\mc R_i\mc R_jg\|_{L^p}
\leq C\|\nabla X\|_{L^p} \|\mc R_i\mc R_jg\|_{B^{-1}_{\infty,1}}
\leq C\|\nabla X\|_{L^p} \|\mc R_i\mc R_jg\|_{B^{0}_{\infty,\infty}}.
$$
Clearly, in the above computations, the low frequencies of $g$ are not involved.
Hence, we actually have, using that $\mc R_i$ maps $\dot B^0_{\infty,\infty}$ to itself
and that $\dot B^0_{\infty,\infty}\hookrightarrow L^\infty,$ 
 $$\|\eta\pa_k \left[T_{X^k};(\eta\Id-\Delta)^{-1}\right]\mc R_i\mc R_jg\|_{L^p}
\leq C\|\nabla X\|_{L^p}\|g\|_{L^\infty}.$$
Summing up all the above estimate concludes the  proof of the lemma.
\end{proof}

\begin{small}
\subsection*{Acknowledgements}

The first author has been partially supported by the project INFAMIE  (ANR-15-CE40-0011)
 operated by the French National Research Agency (ANR).
The second author has been partially supported by the LABEX MILYON (ANR-10-LABX-0070) of Universit\'e de Lyon, within the program ``Investissement d'Avenir''
(ANR-11-IDEX-0007),  by the project BORDS (ANR-16-CE40-0027-01)
and  the programme ``Oberwolfach Leibniz Fellows'' by the Mathematisches Forschungsinstitut Oberwolfach in 2017. 
\end{small}


{\small

}

\end{document}